\newcommand\longto{{\longrightarrow}}
\newcommand\nr{\mathrm{nr}}
\newcommand\CC{\mathbb{C}}\newcommand\QQ{\mathbb{Q}}
\newcommand\NN{\mathbb{N}}\newcommand\ZZ{\mathbb{Z}}
\newcommand\Nb{{\operatorname{Nb}}}
\newcommand\Horn{{\operatorname{Horn}}}
\newcommand\GL{{\operatorname{GL}}}\newcommand\SL{{\operatorname{SL}}}
\newtheorem{prop}{Proposition}
\newtheorem{conj}{Conjecture}
\newtheorem{theo}[prop]{Theorem}
\newtheorem{coro}[prop]{Corollary}
\newenvironment{rem}{\noindent{\bf Remark.}}{}
\newcommand\DynkinNodeSize{2mm}
\newcommand\DynkinArrowLength{3mm}
\tikzset{
  dnode/.style={
    circle,
    inner sep=0pt,
    minimum size=\DynkinNodeSize,
    fill=white,
    draw},
  middlearrow/.style={
    decoration={markings,
      mark=at position 0.6 with
      {\draw (0:0mm) -- +(+135:\DynkinArrowLength); \draw (0:0mm) -- +(-135:\DynkinArrowLength);},
    },
    postaction={decorate}
  },
  leftrightarrow/.style={
    decoration={markings,
      mark=at position 0.999 with
      {
      \draw (0:0mm) -- +(+135:\DynkinArrowLength); \draw (0:0mm) -- +(-135:\DynkinArrowLength);
      },
      mark=at position 0.001 with
      {
      \draw (0:0mm) -- +(+45:\DynkinArrowLength); \draw (0:0mm) -- +(-45:\DynkinArrowLength);
      },
    },
    postaction={decorate}
  },
  sedge/.style={
  },
  dedge/.style={
    middlearrow,
    double distance=0.5mm,
  },
  tedge/.style={
    middlearrow,
    double distance=1.0mm+\pgflinewidth,
    postaction={draw}, 
  },
  infedge/.style={
    leftrightarrow,
    double distance=0.5mm,
  }
}
\begin{document}
\title{Some unexpected properties of Littlewood-Richardson coefficients}
\author{Maxime Pelletier and Nicolas Ressayre}

\maketitle

\begin{abstract}
We are interested in identities between Littlewood-Richardson
coefficients, and hence in comparing different tensor product decompositions of the irreducible
modules of the linear group ${\operatorname{GL}}_n({\mathbb C})$. 
A family of partitions -- called near-rectangular -- is defined, and we prove a stability result which basically asserts that the
decomposition of the tensor product of two representations associated to near-rectangular
partitions does not depend on $n$. 
Given a partition $\lambda$, of length at most $n$, denote by
$V_n(\lambda)$ the associated simple ${\operatorname{GL}}_n({\mathbb
  C})$-module.
We conjecture that, if $\lambda$ is near-rectangular and $\mu$ any
partition, the decompositions of $V_n(\lambda)\otimes V_n(\mu)$
and $V_n(\lambda)^*\otimes V_n(\mu)$
coincide modulo a mysterious bijection. We prove this conjecture if
$\mu$ is also near-rectangular and report several computer-assisted
computations which reinforce our conjecture.
\end{abstract}

\section{Introduction}

In this paper we study some properties of Littlewood-Richardson
coefficients when some partitions are near-rectangular (see below for
a precise definition).
Let $n\geq 2$ be an integer. The irreducible
representations of $\GL_n(\CC)$ are parametrized by all non-increasing sequences of $n$ integers. As is often the case, we focus on the polynomial representations among those, which correspond to the sequences containing only non-negative integers, also called partitions with at most $n$ parts.  
Denote by $V_n(\lambda)$ the
representation of $\GL_n(\CC)$ associated to such a partition $\lambda$. The Littlewood-Richardson
coefficient $c_{\lambda\mu}^\nu $ appears in the tensor product decomposition
$$
V_n(\lambda)\otimes V_n(\mu)=\bigoplus_{\nu\in\Lambda_n}c_{\lambda\mu}^\nu V_n(\nu),
$$
where $\Lambda_n$ denotes the set of partitions of length at  most
$n$. Denote moreover by $V_n(\lambda)^*$ the $\GL_n(\CC)$-module which
is dual to $V_n(\lambda)$.

Obviously, given $\lambda$ and $\mu$ two partitions with at most $n$ parts, $V_n(\lambda)\otimes V_n(\mu)$ and $V_n(\lambda)^*\otimes
V_n(\mu)$ are not isomorphic as $\GL_n(\CC)$-modules. 
Nevertheless, we may want to compare their decompositions in irreducible
modules.\\

\noindent{\bf Problem 1.} Compare the two $\GL_n(\CC)$-modules,
$V_n(\lambda)\otimes V_n(\mu)$ and $V_n(\lambda)^*\otimes V_n(\mu)$.\\

For example, Coquereaux-Zuber \cite{CZ:sumtensor} showed that the sums of the
multiplicities of these two representations coincide.
For $\lambda\in \Lambda_n$, set $\lambda^*=\lambda_1-\lambda_n\geq
\lambda_1-\lambda_{n-1}\geq\cdots\geq\lambda_1-\lambda_2\geq 0$. 
Then $V_n(\lambda)^*$ and $V_n(\lambda^*)$ only differ by a tensor
power of the determinant. More precisely, $V_n(\lambda)^*$ is the irreducible representation of $\GL_n(\CC)$ corresponding to the sequence $-\lambda_n\geq -\lambda_{n-1}\geq\dots\geq -\lambda_1$. As a consequence $V_n(\lambda^*)$ is isomorphic to $V_n(\lambda)^*$ tensored by the $\lambda_1$-th power of the determinant representation of $\GL_n(\CC)$.

Following these observations, the Coquereaux-Zuber's result can be written
as 
\begin{equation}
  \label{eq:3}
  \sum_{\nu\in\Lambda_n}c_{\lambda\mu}^\nu=\sum_{\nu\in\Lambda_n}c_{\lambda^*\mu}^\nu.
\end{equation}
More generally, we may want to compare
$\{c_{\lambda\mu}^\nu\,:\,\nu\in \Lambda_n\}$ and
$\{c_{\lambda^*\mu}^\nu\,:\,\nu\in \Lambda_n\}$ as multisets. For instance, for $n=3$, $\lambda = (5, 3)$ and $\mu = (6, 3)$, $V_3(\lambda) \otimes V_3(\mu)$ decomposes as
$$
\begin{array}{l}
V_3(7,5,5) + V_3(7,7,3) + V_3(8,8,1) + V_3(9,4,4) + V_3(9,8) + V_3(10,7) \\+ V_3(11,6)
+V_3(11,3,3) + V_3(11,4,2) + V_3(11,5,1) + V_3(6,6,5)\\
+2V_3(7,6,4) + 2V_3(8,5,4) + 2V_3(8,7,2) + 2V_3(9,7,1) + 2V_3(10,4,3)\\
  + 2V_3(10,5,2) 
+ 2V_3(10,6,1) +3V_3(8,6,3) + 3V_3(9,5,3) + 3V_3(9,6,2),
\end{array}
$$
while $V_3(\lambda^*) \otimes V_3(\mu)$ expands as
$$
\begin{array}{l}
V_3(7,7,2) + V_3(8,4,4) + V_3(10,3,3) + V_3(8,8) + V_3(9,7) + V_3(10,6) \\
+V_3(11,3,2) + V_3(11,4,1) + V_3(6,5,5) + V_3(6,6,4)+2V_3(7,5,4)\\ 
+ 2V_3(7,6,3) + 2V_3(8,7,1) + 2V_3(9,4,3) + 2V_3(9,6,1) + 2V_3(10,4,2)\\
+ 2V_3(10,5,1)+3V_3(8,5,3) + 3V_3(8,6,2) + 3V_3(9,5,2) + V_3(11,5).
\end{array}
$$
One can then notice that the multiplicities in the two expansions are the same: 11 occurrences
of ``1'', 7 occurrences of ``2'' and 3 occurrences of ``3'' in both
cases. A natural question is then: is this always true?

We check in this article that it is the case for $\GL_3(\CC)$ in general (note that this was already proven by Coquereaux and Zuber in \cite{CZ:GL3}; they even give several proofs of this fact in this paper from 2014): using a computer, we are able to compute explicitly, for $n=3$, the function
$$
\begin{array}{ccl}
(\Lambda_n^2)\times\NN&\longto&\NN\\
(\lambda,\mu,c)&\longmapsto& \Nb_n(c_{\lambda\mu}^\bullet> c):=
\#\{
\nu\in\Lambda_n\,:\, c_{\lambda\mu}^\nu> c
\}.
\end{array}
$$
See Section~\ref{sec:SL3} for details.
In this introduction we report on this computation as follows:

\begin{prop}
\label{prop:Nc3}
  The function 
$$
\begin{array}{cccl}
  \Nb_3(c_{\lambda\mu}^\bullet> c)\,:&\Lambda_3\times \Lambda_3\times\NN&\longto &\NN\\
&(\lambda,\mu,c)&\longmapsto&\#\{
\nu\in\Lambda_n\,:\, c_{\lambda\mu}^\nu>c
\}
\end{array}
$$
is piecewise polynomial of degree 2 with 7 cones\footnote{\label{foot1}Namely, the
  cone $\Lambda_3\times \Lambda_3\times\NN$ is the support of a fan
  with 7 maximal cones. And,  on each one of these 7 cones, the
  function $\Nb_3(c_{\lambda\mu}^\bullet> c)$ is given by a polynomial
  in $(\lambda,\mu,c)$ of degree 2.}. Moreover,
\begin{equation}
  \label{eq:2}
  \Nb_3(c_{\lambda\mu}^\bullet> c)=\Nb_3(c_{\lambda^*\mu}^\bullet> c).
\end{equation}
\end{prop}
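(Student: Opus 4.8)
The plan is to make everything explicit via the combinatorics of $\GL_3$ tensor products. Since $c_{\lambda\mu}^\nu=0$ unless $|\nu|=|\lambda|+|\mu|$, and $c_{\lambda\mu}^\nu$ is unchanged when a common constant is added to all parts of $\lambda$, of $\mu$ and of $\nu$, only the underlying $\SL_3$-weights matter and one may restrict $\nu$ to the triangle $S(\lambda,\mu)=\{\nu:\nu_1\ge\nu_2\ge\nu_3\ge 0,\ \nu_1+\nu_2+\nu_3=|\lambda|+|\mu|\}$. The engine is the classical closed form for Littlewood--Richardson coefficients with at most three rows: a $\GL_3$ hive (equivalently an $\SL_3$ Berenstein--Zelevinsky triangle) has a \emph{single} interior degree of freedom $t$, whose admissible values form an integer interval; hence there are finitely many linear forms $A_i,B_j$ in $(\lambda,\mu,\nu)$ \emph{with no constant term} such that, for $\nu\in S(\lambda,\mu)\cap\ZZ^3$,
\[
c_{\lambda\mu}^\nu=\max\!\Bigl(0,\ \min_j B_j(\lambda,\mu,\nu)-\max_i A_i(\lambda,\mu,\nu)+1\Bigr).
\]
Geometrically this is the ``hexagon'' form of the $\SL_3$ Clebsch--Gordan rule: $c_{\lambda\mu}^\bullet$ is a nested family of hexagons, equal to $1$ on the outer boundary, growing by $1$ at each inward step, up to the plateau value $\min(\lambda_1-\lambda_2,\lambda_2-\lambda_3,\mu_1-\mu_2,\mu_2-\mu_3)+1$.

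\noindent\textbf{$\Nb_3$ as a homogeneous parametric lattice-point count.} Because $\min_j B_j-\max_i A_i$ is integer valued on $S(\lambda,\mu)\cap\ZZ^3$, for $c\in\NN$ one has $c_{\lambda\mu}^\nu>c$ if and only if $B_j(\lambda,\mu,\nu)-A_i(\lambda,\mu,\nu)\ge c$ for all $i,j$; the ``$+1$'' disappears. Hence $\Nb_3(c_{\lambda\mu}^\bullet>c)=\#\bigl(P(\lambda,\mu,c)\cap\ZZ^3\bigr)$, where $P(\lambda,\mu,c)$ is the polygon cut out of $S(\lambda,\mu)$ by those inequalities. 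Every defining (in)equality of $P(\lambda,\mu,c)$ is \emph{homogeneous} linear in $(\lambda,\mu,\nu,c)$, so $P(s\lambda,s\mu,sc)=sP(\lambda,\mu,c)$ for $s>0$. By parametric Ehrhart theory for homogeneous families of rational polytopes (Brion, Barvinok, Sturmfels; Clauss--Loechner), $\#(P(\lambda,\mu,c)\cap\ZZ^3)$ is therefore a quasi-polynomial of degree $\dim P\le 2$ on each maximal cone of a fan refining the chamber complex --- the cones on which the combinatorial type of $P$ and its set of active vertices are constant, cut out by the homogeneous conditions $B_j=B_{j'}$, $A_i=A_{i'}$, $B_j-A_i=c$, and ``a vertex of $P$ hits a wall of $S$''. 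For $\GL_3$ the $2\times2$ minors involved are $0$ or $\pm1$ (the hive inequalities are unimodular), so those vertices are lattice points and the quasi-polynomials are genuine polynomials of degree $\le 2$; this is already the piecewise-polynomial-of-degree-$2$ assertion.

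\noindent\textbf{The $7$ cones, the identity \eqref{eq:2}, and the obstacle.} To pin down the number of pieces I would enumerate the combinatorial types of $P(\lambda,\mu,c)$ inside the triangle $S(\lambda,\mu)$ --- the generic hexagon and its degenerations to a pentagon, trapezoid, triangle, segment, point, or $\emptyset$ --- and merge those carrying the same quadratic; this finite check (cross-checked by the computer of Section~\ref{sec:SL3}, which also outputs the $7$ quadratics) should leave exactly $7$ maximal cones. For \eqref{eq:2}, note that the family $\bigl(\Nb_3(c_{\lambda\mu}^\bullet>c)\bigr)_{c\in\NN}$ records, for each threshold, how many of the nonzero coefficients exceed it, hence determines and is determined by the finite multiset $\{c_{\lambda\mu}^\nu:\nu\in\Lambda_3\}$ of nonzero values; so \eqref{eq:2} is equivalent to the equality of multisets $\{c_{\lambda\mu}^\nu:\nu\in\Lambda_3\}=\{c_{\lambda^*\mu}^\nu:\nu\in\Lambda_3\}$ proved by Coquereaux--Zuber \cite{CZ:GL3}, which one can re-derive from the explicit formula above by observing that $\lambda\mapsto\lambda^*$ exchanges the Dynkin labels $\lambda_1-\lambda_2$ and $\lambda_2-\lambda_3$, reflecting the entire nested hexagon family and hence preserving the lattice-point count of each ring. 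I expect the real difficulty to lie exactly here: upgrading the soft piecewise-polynomiality to the precise statement --- checking unimodularity so that no genuine quasi-periodicity survives, and verifying that the degenerations organise into exactly $7$ maximal cones with a single quadratic on each --- is the delicate part, which is why it is also carried out by machine.
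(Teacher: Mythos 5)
Your strategy is essentially the paper's: reduce to Proposition~\ref{prop:LR3}, observe that $c_{\lambda\mu}^\nu>c$ is equivalent to a system of homogeneous linear inequalities in $(\lambda,\mu,\nu,c)$, recast $\Nb_3$ as a parametric lattice-point count (or vector partition function), invoke Ehrhart/Sturmfels to get piecewise quasi-polynomiality, and let the computer supply the $7$ cones and quadratics. For the identity~\eqref{eq:2} the paper reads it off the explicit output in Proposition~\ref{prop:calculSL3}: five of the seven $(C_i,P_i)$ are fixed by $k_1\leftrightarrow k_2$ and the remaining two are swapped, so $\psi$ is symmetric in $(k_1,k_2)$. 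Your alternative --- reducing~\eqref{eq:2} to multiset equality and citing \cite{CZ:GL3}, or attempting a direct reflection argument on the nested hexagon family --- is a valid route, though the reflection argument as sketched would still need care, since reflecting the hexagons does not obviously interact nicely with the fixed ambient triangle $\{\nu\in\Lambda_3:|\nu|=|\lambda|+|\mu|\}$.

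There is one genuine error. You claim that ``for $\GL_3$ the $2\times2$ minors involved are $0$ or $\pm1$ (the hive inequalities are unimodular), so those vertices are lattice points and the quasi-polynomials are genuine polynomials''. This is false, and the paper explicitly records the opposite: after the change of variables, the matrix $\tilde M$ is \emph{not} unimodular --- the lcm of its maximal minors is $6$. One can see the obstruction directly from \eqref{eq:18ineq}: restricted to the slice $\nu_1+\nu_2+\nu_3=|\lambda|+|\mu|$, the facets $\nu_1-\nu_2-c\ge 0$ and $\nu_2-\nu_3-c\ge 0$ have $(\nu_1,\nu_2)$-coefficient vectors $(1,-1)$ and $(1,2)$, whose $2\times2$ determinant is $3$; their intersection forces $\nu_2=\tfrac13(|\lambda|+|\mu|)$, which is not generically integral. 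So a priori one only gets a quasi-polynomial with period dividing $6$, and the fact that the output is actually polynomial is a computational surprise, not a consequence of unimodularity. Your proof would need to replace the unimodularity shortcut by the actual verification (as the paper does, via Barvinok/iscc) that the quasi-polynomials degenerate to polynomials and that the $36$ raw chambers glue into $7$.
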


\bigskip
We then aim to suggest a generalization to $\GL_n(\CC)$ for any $n$: we will say that a partition $\lambda\in\Lambda_n$ is {\it
  near-rectangular} if $\lambda=\lambda_1\lambda_2^{n-2}\lambda_n$ for some integers
$\lambda_1\geq \lambda_2\geq \lambda_n$; that is, if $\lambda_2=\cdots=\lambda_{n-1}$. 
We formulate the following
\begin{conj}
\label{conj:setLR}
Let $\lambda$ and $\mu$ in $\Lambda_n$.
  If $\lambda$ is near-rectangular then
$$\forall c\in\NN\qquad
\#\{\nu\in \Lambda_n \,:\, c_{\lambda\mu}^\nu=c\}=
\# \{\nu\in \Lambda_n \,:\, c_{\lambda^*\mu}^\nu=c\}.
$$
\end{conj}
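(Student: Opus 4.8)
The plan is first to restate the conjecture so that the relevant symmetry becomes visible, and then to search for the bijection that must underlie it. If $\lambda\in\Lambda_n$ is near-rectangular, write $\lambda=(\lambda_1,\lambda_2^{\,n-2},\lambda_n)$; subtracting $\lambda_n(1^n)$ leaves a partition whose successive differences are $p:=\lambda_1-\lambda_2$, then $n-3$ zeros, then $q:=\lambda_2-\lambda_n$, so that as a dominant weight of $\SL_n(\CC)$ one has $\lambda=p\omega_1+q\omega_{n-1}$, and hence $\lambda^*=q\omega_1+p\omega_{n-1}$ (the Dynkin automorphism $\omega_k\mapsto\omega_{n-k}$ sees only the swap $\omega_1\leftrightarrow\omega_{n-1}$ here). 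Since $V_n(\lambda)$ and $V_n(\lambda^*)$ differ from the corresponding $\SL_n(\CC)$-modules by powers of $\det$, and since two partitions in $\Lambda_n$ of the common size $|\lambda|+|\mu|$ restrict to distinct weights of $\SL_n(\CC)$, the finite multiset $\{c_{\lambda\mu}^\nu:\nu\in\Lambda_n,\ c_{\lambda\mu}^\nu>0\}$ coincides with the multiset of tensor multiplicities of the $\SL_n(\CC)$-module $V(\lambda)\otimes V(\mu)$, and similarly for $\lambda^*$ (the $c=0$ instance of the conjecture being the vacuous $\infty=\infty$). So Conjecture~\ref{conj:setLR} is the assertion that interchanging $p$ and $q$ in the first factor leaves the multiset of tensor multiplicities of $V(p\omega_1+q\omega_{n-1})\otimes V(\mu)$ unchanged, for \emph{every} $\mu$.

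Next I would try to leverage the one symmetry that is automatic. Dualizing the whole decomposition of $V_n(\lambda)\otimes V_n(\mu)$ and using $V_n(\lambda)^*\cong V_n(\lambda^*)\otimes\det^{-\lambda_1}$ produces a value-preserving bijection $\nu\mapsto\nu^*+\bigl(\lambda_1+\mu_1-\nu_1\bigr)(1^n)$ from $\{\nu:c_{\lambda\mu}^\nu>0\}$ to $\{\nu':c_{\lambda^*\mu^*}^{\nu'}>0\}$, so $\{c_{\lambda\mu}^\bullet\}=\{c_{\lambda^*\mu^*}^\bullet\}$ as multisets for \emph{all} $\lambda,\mu$. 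Combined with the $\lambda\leftrightarrow\mu$ symmetry of Littlewood-Richardson coefficients, Conjecture~\ref{conj:setLR} becomes equivalent to the statement that, for near-rectangular $\rho$ and arbitrary $\sigma$, $\{c_{\rho\sigma}^\bullet\}=\{c_{\rho\sigma^*}^\bullet\}$. What is missing is then a bijection $\Phi=\Phi_\rho$ on partitions, depending only on the two corners of $\rho$, with $c_{\rho\sigma}^\nu=c_{\rho\sigma^*}^{\Phi(\nu)}$ for all $\sigma$. The candidates I would test are: (i) a piecewise-linear, lattice-point-preserving map between the hive (equivalently honeycomb, or Berenstein-Zelevinsky) polytopes computing $c_{\rho\sigma}^\nu$ and $c_{\rho\sigma^*}^{\Phi(\nu)}$, built from a Sch\"utzenberger-Lusztig involution or from an element of the cactus group, in the spirit of Henriques-Kamnitzer, realizing $\omega_1\leftrightarrow\omega_{n-1}$ on the $\rho$-factor alone; (ii) the base case $\min(p,q)=0$, i.e.\ $\rho=p\omega_1$ (a row) or $\rho=q\omega_{n-1}$ (a rectangle), where every coefficient is $0$ or $1$ by the Pieri rule so the statement reduces to the Coquereaux-Zuber identity~\eqref{eq:3} (``same sum'' forces ``same multiset'' there), to be propagated to a general $\rho=p\omega_1+q\omega_{n-1}$ through the decomposition $V(p\omega_1)\otimes V(q\omega_{n-1})=\bigoplus_{j\ge 0}V\bigl((p-j)\omega_1+(q-j)\omega_{n-1}\bigr)$ whose Cartan component is $V(\rho)$; (iii) a quantitative route generalizing Proposition~\ref{prop:Nc3}: show, for near-rectangular $\lambda$, that $(\lambda,\mu,c)\mapsto\Nb_n(c_{\lambda\mu}^\bullet>c)$ is piecewise polynomial on a fan inside the Littlewood-Richardson cone cut out by Horn-type inequalities, with each chamber polynomial \emph{manifestly} invariant under $\lambda\mapsto\lambda^*$, so that the conjecture follows chamber by chamber without ever exhibiting $\Phi$.

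The main obstacle---and the reason this is still only a conjecture---is precisely this last point: the swap $\omega_1\leftrightarrow\omega_{n-1}$ applied to a \emph{single} tensor factor is not a visible symmetry of any of the standard combinatorial or polytopal models of $c_{\lambda\mu}^\nu$ for fixed $\mu$, so $\Phi_\rho$ is not known even conjecturally once $\mu$ is arbitrary; route (ii) stalls because in $V(p\omega_1)\otimes V(q\omega_{n-1})\otimes V(\mu)$ a fixed irreducible typically receives contributions from several values of $j$ and the multiplicities add, which defeats a naive induction on $\min(p,q)$; and route (iii) requires controlling $\Nb_n$ in all polynomial degrees, which was already only computer-accessible for $n=3$. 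In short, the hard part is to isolate the one statistic or involution that is at once fine enough to recover every Littlewood-Richardson coefficient and coarse enough to be $*$-invariant; short of that, the available evidence is the case treated in this paper where $\mu$ is also near-rectangular, the case $n=3$ (every partition being near-rectangular there), the case $\min(p,q)=0$ via Coquereaux-Zuber, and the computer-assisted computations reported below.
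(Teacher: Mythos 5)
The statement you were asked about is Conjecture~\ref{conj:setLR}, which the paper itself does not prove: it is left open there, with only the cases $n=3$ (Corollary~\ref{cor:GL3}) and ``$\mu$ also near-rectangular, $n\geq 4$'' established, plus computational evidence. Your proposal correctly treats it as a conjecture rather than pretending to close it, so there is no hidden gap to fault beyond the one the authors themselves face. Your partial reductions are all sound and in some places sharper than what the paper writes out: the identification of a near-rectangular $\lambda$ with $p\varpi_1+q\varpi_{n-1}$ for $\SL_n(\CC)$ and of $\lambda^*$ with the swap $p\leftrightarrow q$; the harmless passage from $\GL_n$ to $\SL_n$ multisets; the full dualization $c_{\lambda\mu}^\nu=c_{\lambda^*\mu^*}^{\nu^*+(\lambda_1+\mu_1-\nu_1)1^n}$, which is the paper's~\eqref{eq:symNb} and legitimately reduces the conjecture to $\{c_{\rho\sigma}^\bullet\}=\{c_{\rho\sigma^*}^\bullet\}$ with $\rho$ near-rectangular and $\sigma$ arbitrary; and the observation that $\min(p,q)=0$ already follows from Pieri multiplicity-freeness combined with the Coquereaux--Zuber sum identity~\eqref{eq:3}, a case the paper does not isolate explicitly.

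Your route (iii) is exactly the strategy the paper executes in its provable special cases (Propositions~\ref{prop:stab} and~\ref{prop:introSL4nr2}: explicit piecewise polynomials whose chambers are permuted by $\lambda\mapsto\lambda^*$), and your diagnosis of why routes (ii) and (iii) stall in general matches the authors' own discussion. For the record, the paper's closing remark reports that the conjecture was subsequently settled by Grinberg via a piecewise-linear involution $\varphi$ of $\ZZ^n$ with $c_{\lambda\mu}^\nu=c_{\lambda^*\mu}^{\varphi(\nu)}$ --- close in spirit to your candidate (i) --- with the twist that $\varphi(\nu)$ need not be a partition, in which case $c_{\lambda\mu}^\nu$ simply vanishes; so the ``missing statistic'' you were looking for does exist, but only as a map on all of $\ZZ^n$ rather than on $\Lambda_n$.
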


\bigskip
Equivalently, we conjecture that there exists a bijection
$\varphi\,:\,\Lambda_n\longto\Lambda_n$, depending on $\lambda$ and
$\mu$ such that 
$$
\forall \nu\in \Lambda_n\qquad c_{\lambda\mu}^\nu=c_{\lambda^*\mu}^{\varphi(\nu)},
$$
if $\lambda$ is near-rectangular.
In other words, the multisets $\{c_{\lambda\mu}^\nu\,:\,
\nu\in\Lambda_n\}$ and $\{c_{\lambda^*\mu}^\nu\,:\,
\nu\in\Lambda_n\}$ are expected to be equal, if $\lambda$ is near-rectangular.

Note that any partition of size $\leq 3$ is near-rectangular. Then,
both the last assertion of Proposition~\ref{prop:Nc3}, and the result of \cite{CZ:GL3} are equivalent to
\begin{coro}
\label{cor:GL3}
  Conjecture~\ref{conj:setLR} holds for $n=3$.
\end{coro}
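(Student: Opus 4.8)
The plan is to read the corollary off from Proposition~\ref{prop:Nc3}. First I would note that for $n=3$ the defining condition for near-rectangularity, namely $\lambda_2=\cdots=\lambda_{n-1}$, is vacuous: every $\lambda\in\Lambda_3$ is near-rectangular. Hence Conjecture~\ref{conj:setLR} for $n=3$ is precisely the assertion that for all $\lambda,\mu\in\Lambda_3$ and all $c\in\NN$ one has $\#\{\nu\in\Lambda_3:c_{\lambda\mu}^\nu=c\}=\#\{\nu\in\Lambda_3:c_{\lambda^*\mu}^\nu=c\}$.

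Next I would dispose of the case $c=0$ separately: for fixed $\lambda,\mu$ only finitely many $\nu$ satisfy $c_{\lambda\mu}^\nu\neq 0$ (they all have $|\nu|=|\lambda|+|\mu|$ and at most three parts), so both counts are infinite and the equality is trivial. For $c\geq 1$ I would use the telescoping identity
\begin{equation*}
\#\{\nu\in\Lambda_3:c_{\lambda\mu}^\nu=c\}=\Nb_3(c_{\lambda\mu}^\bullet>c-1)-\Nb_3(c_{\lambda\mu}^\bullet>c),
\end{equation*}
which is valid because $\{\nu:c_{\lambda\mu}^\nu=c\}=\{\nu:c_{\lambda\mu}^\nu>c-1\}\setminus\{\nu:c_{\lambda\mu}^\nu>c\}$ with both of the latter sets finite; the same identity holds with $\lambda$ replaced by $\lambda^*$. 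Then applying~\eqref{eq:2} with the two thresholds $c-1$ and $c$ shows that the two right-hand sides agree, which completes the proof.

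So there is no genuine obstacle once Proposition~\ref{prop:Nc3} --- in particular its second assertion~\eqref{eq:2} --- is available; the entire content sits there. If one instead wanted to establish~\eqref{eq:2} directly, the hard part would be the input of Proposition~\ref{prop:Nc3}: showing that $(\lambda,\mu,c)\mapsto\Nb_3(c_{\lambda\mu}^\bullet>c)$ is piecewise polynomial on an explicit fan with $7$ maximal cones --- where I would feed in a combinatorial model for $\GL_3$ Littlewood--Richardson coefficients together with the computer-assisted analysis of Section~\ref{sec:SL3} --- and then verifying the symmetry $\lambda\leftrightarrow\lambda^*$ cone by cone; alternatively one simply invokes \cite{CZ:GL3}. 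It is also worth recording, although it is not needed in the argument above, that $(\lambda^*)^*$ agrees with $\lambda$ up to a power of the determinant, so that the roles of $\lambda$ and $\lambda^*$ in Conjecture~\ref{conj:setLR} are symmetric.
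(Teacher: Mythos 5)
Your proposal is correct and takes essentially the same route as the paper, which simply observes that every $\lambda\in\Lambda_3$ is near-rectangular and reads the corollary off from Proposition~\ref{prop:Nc3} (or from \cite{CZ:GL3}). The paper treats the passage from the "$>c$" statement~\eqref{eq:2} to the "$=c$" statement of Conjecture~\ref{conj:setLR} as immediate; you spell it out via the finite telescoping identity and the trivially-infinite $c=0$ case, which is a reasonable amount of extra detail but not a different argument.
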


\bigskip
In literature, there are a lot of equalities between
Littlewood-Richardson coefficients. 
There are symmetries (see \cite{bri144} and references therein), stabilities (see
\cite{Briand_2015}), reductions (see \cite{SD:reductionLR}). None of these numerous
results really seems to explain whether or why Conjecture~\ref{conj:setLR} should hold. 
Moreover, there are various combinatorial models for the
Littlewood-Richardson coefficients (see \cite{Ful:YT,Litt:path,Zelevinsky:pictures,KT:saturation,Vakil:LR,Coskun:2step}). 
None of these models seems to prove Conjecture~\ref{conj:setLR} easily.

What made this Conjecture~\ref{conj:setLR} even more unexpected for us is that it seems that, should the aforementioned bijection $\varphi\,:\,\Lambda_n\longto\Lambda_n$ exist, it does not seem to be expressible simply in terms of $\lambda$ and $\mu$. Indeed, even if $n=3$, we remark in Section~\ref{sec:SL3} that $(\lambda,\mu,\nu)\longto(\lambda^*,\mu,\varphi(\nu))$ cannot be linear.\\
 
We checked, using Sagemath, Conjecture~\ref{conj:setLR} on a few million examples for
$\GL_4(\CC)$, $\GL_5(\CC)$, $\GL_6(\CC)$, and $\GL_{10}(\CC)$: see Section~\ref{sec:typeAn} for details. Moreover, we also give in Section~\ref{sec:typeAn} an example showing that the assumption
on $\lambda$ is truly necessary.

\bigskip
We now consider the tensor products of representations associated to two near-rectangular partitions.
Observe that a partition $\lambda$ of length $l$ parametrizes representations
$V_n(\lambda)$ of $\GL_n(\CC)$ for any $n\geq l$. 
A priori, $c_{\lambda\mu}^\nu $ could depend on $n$. It is a classical
result (see e.g. \cite{Ful:YT}) of stability that  it actually does not.
Our second result is a similar stability result but for partitions of
arbitrarily large length. 
Indeed, fix two near-rectangular partitions   $\lambda=\lambda_1\,
\lambda_2^{n-2}\lambda_n$ and $\mu=\mu_1\, \mu_2^{n-2}\mu_n$. 
We prove that the decomposition of $V_n(\lambda)\otimes
V_n(\mu)$ does not depend on $n\geq 4$ but only on the six
integers $\lambda_1,\lambda_2,\lambda_n,\mu_1,\mu_2$ and $\mu_n$ . More precisely,
we get a simple expression of the Littlewood-Richardson coefficients that
appear in this tensor product, expression independent of $n$.

\begin{prop}
\label{prop:stab}
  Let $n\geq 4$. Let $\lambda=\lambda_1\,
\lambda_2^{n-2}$ and $\mu=\mu_1\, \mu_2^{n-2}$ be two near-rectangular partitions.
Let  $\nu$ be a partition with at most $n$ parts.
Then $c_{\lambda\mu}^\nu =0$ unless $\nu=\nu_1\,\nu_2\,(\lambda_2+\mu_2)^{n-4}\,\nu_{n-1}\nu_n$
for four integers $\nu_1,\,\nu_2,\,\nu_{n-1}$ and $\nu_n$ such that $\nu_1\geq \nu_2\geq
\lambda_2+\mu_2\geq \nu_{n-1}\geq \nu_n$. In this case
\[ 
  c_{\lambda\mu}^\nu=\#\big(\llbracket M,m\rrbracket\big),
 \]
where 
$$
M=\max(0,\lambda_2+\mu_1-\nu_1,-\mu_2+\nu_n)
$$
and
$$
m=\min(\lambda_1+\mu_1-\nu_1,\lambda_2+\mu_1-\nu_2,-\lambda_2-\mu_2+\nu_{n-1}+\nu_n,-\mu_2+\nu_{n-1}).
$$
In particular, this value does not depend on $n\geq 4$.
\end{prop}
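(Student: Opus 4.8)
Tensoring both sides by $\det^{-\lambda_2-\mu_2}$, one may assume $\lambda_n=\mu_n=0$, so that $\lambda=\lambda_1\lambda_2^{n-2}$ and $\mu=\mu_1\mu_2^{n-2}$ have at most $n-1$ parts. A near-rectangular partition satisfies $\lambda=(\lambda_1-\lambda_2)\omega_1+\lambda_2\omega_{n-1}$, and $\omega_{n-1}$ is the weight of $\Lambda^{n-1}(\CC^n)\cong(\CC^n)^*\otimes\det$; hence $V_n(\lambda)=V^{(a,b)}\otimes\det^{\,b}$ with $a=\lambda_1-\lambda_2$, $b=\lambda_2$, where $V^{(p,q)}$ denotes the irreducible $\GL_n(\CC)$-module with highest weight $p\epsilon_1-q\epsilon_n$ (equivalently $(p,0,\dots,0,-q)$); likewise $V_n(\mu)=V^{(c,d)}\otimes\det^{\,d}$ with $c=\mu_1-\mu_2$, $d=\mu_2$. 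Thus $c_{\lambda\mu}^\nu$ is the multiplicity of $V_n(\nu)\otimes\det^{-b-d}$ in $V^{(a,b)}\otimes V^{(c,d)}$, and everything reduces to decomposing this last tensor product, together with matching highest weights: $V_n(\nu)\otimes\det^{-b-d}$ has highest weight $(\nu_1-\lambda_2-\mu_2,\ \nu_2-\lambda_2-\mu_2,\ \dots,\ \nu_n-\lambda_2-\mu_2)$.

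To decompose $V^{(a,b)}\otimes V^{(c,d)}$ I would use the classical harmonic (contraction) identity $\mathrm{Sym}^pW\otimes\mathrm{Sym}^qW^*=\bigoplus_{i\ge 0}V^{(p-i,q-i)}$ for $W=\CC^n$, which gives, in the representation ring, $V^{(p,q)}=[\mathrm{Sym}^pW\otimes\mathrm{Sym}^qW^*]-[\mathrm{Sym}^{p-1}W\otimes\mathrm{Sym}^{q-1}W^*]$. Substituting this for both factors produces four terms $[\mathrm{Sym}^{a-i}W\otimes\mathrm{Sym}^{c-j}W]\otimes[\mathrm{Sym}^{b-i}W^*\otimes\mathrm{Sym}^{d-j}W^*]$ with $i,j\in\{0,1\}$ and sign $(-1)^{i+j}$; using $\mathrm{Sym}^\alpha W\otimes\mathrm{Sym}^\beta W=\bigoplus_k V_n((\alpha+\beta-k,k))$ and its dual $\mathrm{Sym}^\alpha W^*\otimes\mathrm{Sym}^\beta W^*=\bigoplus_\ell V_n((\alpha+\beta-\ell,\ell))^*$, each such term is a sum of modules $V_n((A,B,0,\dots))\otimes V_n((C,D,0,\dots))^*$, and a last application of the stable formula $V_n(\sigma)\otimes V_n(\tau)^*=\bigoplus_{\gamma,\delta}\big(\sum_\epsilon c^\sigma_{\epsilon\gamma}c^\tau_{\epsilon\delta}\big)V_n([\gamma;\delta])$ decomposes them into modules $V_n\big((\gamma_1,\gamma_2,0^{n-4},-\delta_2,-\delta_1)\big)$ with $\gamma,\delta$ of length $\le 2$. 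Consequently $c_{\lambda\mu}^\nu=0$ unless the highest weight of $V_n(\nu)\otimes\det^{-b-d}$ has this shape, i.e. $\nu_3=\cdots=\nu_{n-2}=\lambda_2+\mu_2$ with $\nu_1\ge\nu_2\ge\lambda_2+\mu_2\ge\nu_{n-1}\ge\nu_n$ (the inequalities being exactly $\gamma_1\ge\gamma_2\ge 0$ and $\delta_1\ge\delta_2\ge 0$ for $\gamma=(\nu_1-\lambda_2-\mu_2,\nu_2-\lambda_2-\mu_2)$, $\delta=(\lambda_2+\mu_2-\nu_n,\lambda_2+\mu_2-\nu_{n-1})$). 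The stable formula is valid as soon as $n\ge\ell(\gamma)+\ell(\delta)$, and since here $\ell(\gamma),\ell(\delta)\le 2$ this is exactly the hypothesis $n\ge 4$, which is also what makes the "top" slots $\epsilon_1,\epsilon_2$ and the "bottom" slots $\epsilon_{n-1},\epsilon_n$ disjoint; in particular the resulting multiplicity no longer involves $n$.

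It remains to evaluate the alternating sum $\sum_{i,j\in\{0,1\}}(-1)^{i+j}\sum_{k,\ell}\sum_\epsilon c^{(A,B)}_{\epsilon\gamma}c^{(C,D)}_{\epsilon\delta}$, where $(A,B)=(a+c-i-j-k,k)$ and $(C,D)=(b+d-i-j-\ell,\ell)$. Because $\lambda,\mu$ are only \emph{near}-rectangular, the partitions $(a),(b),(c),(d)$ are single rows; for two-row targets $(A,B)$ every Littlewood--Richardson coefficient $c^{(A,B)}_{\epsilon\gamma}$ is $0$ or $1$ and, when nonzero, $\epsilon$ is forced by $(A,B,\gamma)$, so each inner multiplicity just counts pairs $(k,\ell)$ for which the two forced $\epsilon$'s agree -- a set of integer points in an interval. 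The four signed interval counts then combine, and one checks that the result collapses to the number of admissible values of a single integer $t$, constrained by the linear inequalities coming from dominance of the intermediate weights; using the relation $\nu_1+\nu_2+\nu_{n-1}+\nu_n=\lambda_1+\mu_1+2\lambda_2+2\mu_2$ (from $|\nu|=|\lambda|+|\mu|$) to recognise the repeated constraints, the range of $t$ is exactly $\llbracket M,m\rrbracket$ with $M$ and $m$ as in the statement. Hence $c_{\lambda\mu}^\nu=\#\big(\llbracket M,m\rrbracket\big)$, independent of $n\ge 4$.

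\textbf{Main obstacle.} The one step with genuine content is this last evaluation: tracking the signs in the alternating sum and verifying that all contributions cancel except for exactly one interval's worth, with endpoints matching the stated $M$ and $m$ on the nose. (A more hands-on alternative is to run the Littlewood--Richardson rule directly on $\nu/\lambda$: near-rectangularity of $\lambda$ together with the ballot condition for the content $\mu=\mu_1\mu_2^{n-2}$ forces the "tall rectangular" part of the skew tableau to be filled rigidly, leaving again one free integer $t\in\llbracket M,m\rrbracket$; this is the same combinatorics in a different language, and in particular reproves the same stability from $n\ge 4$.) Finally the degenerate cases -- $\nu$ with fewer than $n$ parts, $\lambda_1=\lambda_2$ or $\mu_1=\mu_2$, and the boundary value $n=4$ where the middle block $(\lambda_2+\mu_2)^{n-4}$ is empty -- must be checked separately, but present no real difficulty.
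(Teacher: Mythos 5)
Your route is genuinely different from the paper's. The paper uses the Knutson--Tao hive model directly: fixing the boundary labels $\lambda,\mu,\nu$, it observes that the rhombus conditions force almost every interior edge to a definite value (the diagonals to $\lambda_2$, the anti-diagonals to $\mu_2$, the interior horizontals to $\lambda_2+\mu_2$), which immediately gives the vanishing condition $\nu_3=\cdots=\nu_{n-2}=\lambda_2+\mu_2$; after that a hive is determined by a single residual integer $a_0$, and the $18$ hive inequalities transcribe literally into $M\le a_0\le m$. You instead work in the representation ring of $\GL_n(\CC)$: you write $V_n(\lambda)$ as $V^{(a,b)}\otimes\det^b$, expand each $V^{(p,q)}$ via the harmonic identity $\mathrm{Sym}^pW\otimes\mathrm{Sym}^qW^*=\bigoplus_iV^{(p-i,q-i)}$, apply Pieri, and finish with the stable (Koike-type) formula for $V_n(\sigma)\otimes V_n(\tau)^*$. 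This is a legitimate and rather elegant alternative, and it explains the shape of $\nu$ and the stability threshold $n\ge 4$ conceptually (as $\ell(\gamma)+\ell(\delta)\le 4$), which the hive picture gets somewhat by inspection.

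However, the proposal stops exactly where the paper's proof does most of its work. The multiplicity you arrive at is a signed four-fold sum
\[
\sum_{i,j\in\{0,1\}}(-1)^{i+j}\sum_{k,\ell}\sum_{\epsilon}
c^{(a+c-i-j-k,\,k)}_{\epsilon\gamma}\,c^{(b+d-i-j-\ell,\,\ell)}_{\epsilon\delta},
\]
and you assert, without carrying it out, that after collecting terms this "collapses to the number of admissible values of a single integer $t$" with range $\llbracket M,m\rrbracket$. That assertion is the whole content of the proposition's formula: the specific max of three linear forms and min of four linear forms has to emerge from the cancellation, and nothing in the outline makes this evident (for two-row shapes, $\epsilon$ is not uniquely forced by $\gamma$ and $(A,B)$ — it ranges over an interval — so each inner sum is already the size of an interval intersection, and one must then show the signed combination over $(i,j)$ and the Pieri parameters telescopes to a single interval). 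Until that computation is actually done, the proof is incomplete; your hands-on alternative (running the LR rule directly on $\nu/\lambda$) is again only sketched. By contrast, the hive argument sidesteps this entirely because the forced edges reduce the counting problem to one variable from the start, so the interval endpoints are read off without any inclusion--exclusion. I'd regard your proposal as a plausible alternative strategy whose key step still needs to be executed, not as a completed proof.
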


\bigskip
In Proposition~\ref{prop:stab}, for simplicity we assumed that
$\lambda_n=\mu_n=0$. Yet nothing is lost with such an assumption, since 
$V_n(\lambda_1\lambda_2^{n-2}\lambda_n)\simeq\det^{\lambda_n}\otimes
V_n((\lambda_1-\lambda_n) (\lambda_2-\lambda_n)^{n-2})$.

Proposition~\ref{prop:stab} positively answers
\cite[Question~2]{PW:multiplicities} by giving a much stronger result.
 It also allows to check a
particular case of Conjecture~\ref{conj:setLR}. 
Indeed,
with the help of a computer we computed
$\Nb_4(c_{\lambda\mu}^\bullet> c)$ for $\lambda$ and  $\mu$
near-rectangular.
Let $\Lambda_n^\nr=\{\lambda_1\lambda_2^{n-2}\lambda_n\,:\,\lambda_1\geq \lambda_2\geq \lambda_n\}$  be the set of near-rectangular partitions of
length at most $n$.

\begin{prop}\label{prop:introSL4nr2}
  The function 
$$
\begin{array}{cccl}
\Nb_4(c_{\lambda\mu}^\bullet> c)\,:&  \Lambda_4^\nr\times \Lambda_4^\nr\times\NN&\longto &\NN\\
&(\lambda,\mu,c)&\longmapsto&\#\{
\nu\in\Lambda_n\,:\, c_{\lambda\mu}^\nu >c
\}
\end{array}
$$
is piecewise polynomial of degree 3 with 36 cones. Moreover,
\begin{equation}
 \Nb_4(c_{\lambda\mu}^\bullet> c)=\Nb_4(c_{\lambda^*\mu}^\bullet> c).
\end{equation}
\end{prop}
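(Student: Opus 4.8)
The plan is to derive everything from the closed formula of Proposition~\ref{prop:stab} and then to turn the statement into a finite comparison of two piecewise polynomials. First I would record that the operation $\lambda\mapsto\lambda^*$ preserves near-rectangularity: if $\lambda=\lambda_1\lambda_2^{2}\lambda_4\in\Lambda_4^\nr$ then $\lambda^*=(\lambda_1-\lambda_4)\,(\lambda_1-\lambda_2)\,(\lambda_1-\lambda_2)\,0$, which is again of the shape $a\,b^2\,0$ with $a\ge b\ge 0$. Hence, after the determinant normalisation described just after Proposition~\ref{prop:stab} (which translates $\nu$ uniformly and alters neither the multiset of multiplicities nor the counting function), Proposition~\ref{prop:stab} applies verbatim to both $V_4(\lambda)\otimes V_4(\mu)$ and $V_4(\lambda^*)\otimes V_4(\mu)$. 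It yields $c_{\lambda\mu}^\nu=\#\llbracket M,m\rrbracket=\max(0,m-M+1)$ and $c_{\lambda^*\mu}^\nu=\#\llbracket M^*,m^*\rrbracket$, where $M,m$ (resp.\ $M^*,m^*$) are the explicit minima and maxima of linear forms in $\nu_1,\dots,\nu_4$ given there, each formula being valid on a polyhedral support region for $\nu$ cut out by a chain of inequalities together with $|\nu|=|\lambda|+|\mu|$ (resp.\ $|\nu|=|\lambda^*|+|\mu|$).

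Next I would fix $c\in\NN$ and observe that $c_{\lambda\mu}^\nu>c$ is equivalent to the finitely many linear inequalities $m-M\ge c$ --- which, as $c\ge 0$, already forces $M\le m$ and hence $c_{\lambda\mu}^\nu=m-M+1>c$ --- together with the support and size constraints on $\nu$. Using $|\nu|=|\lambda|+|\mu|$ to eliminate $\nu_4$, the admissible $\nu$ become exactly the integer points of a bounded polyhedron $\mathcal P=\mathcal P(\lambda_1,\lambda_2,\mu_1,\mu_2,c)\subset\RR^{3}$ whose facet normals are fixed integer vectors and whose right-hand sides are affine in the parameters; this is a parametric polytope in the sense of Ehrhart theory. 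By the standard theory of lattice-point enumeration in such families (equivalently, vector partition functions), $\Nb_4(c_{\lambda\mu}^\bullet>c)$ is piecewise quasi-polynomial of degree $\le 3$, the pieces being the maximal cones of a fan on $\Lambda_4^\nr\times\Lambda_4^\nr\times\NN$. Running this enumeration on a computer produces the explicit fan --- 36 maximal cones --- shows that the quasi-polynomials here collapse to genuine polynomials, and gives the degree-$3$ polynomial attached to each cone; this establishes the first assertion.

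For the equality $\Nb_4(c_{\lambda\mu}^\bullet>c)=\Nb_4(c_{\lambda^*\mu}^\bullet>c)$ I would run the same enumeration for $\lambda^*$, which is legitimate by the first paragraph, and then compare the two piecewise polynomials cone by cone: once both descriptions are in hand this is a finite verification. The conceptually cleaner --- and equivalent --- goal is to exhibit, on each cone, an affine bijection $\varphi$ on the admissible $\nu$ with $c_{\lambda\mu}^\nu=c_{\lambda^*\mu}^{\varphi(\nu)}$, obtained by matching the linear forms defining $(M,m)$ with those defining $(M^*,m^*)$, possibly after an affine reflection of the interval $\llbracket M,m\rrbracket$ (which preserves its length), while splitting into the finitely many cases according to which forms attain the extrema and which truncations to $0$ are active; on each case $\varphi$ is affine and maps into the correct support region. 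Note that $|\lambda^*|+|\mu|\ne|\lambda|+|\mu|$ in general, so $\varphi$ is never the identity. The main obstacle is exactly this: as the introduction already notes for $n=3$, there appears to be no uniform closed form for $\varphi$, so the equality does not drop out of the two closed formulas by a single substitution, and one is forced into a careful, computer-aided case analysis; a minor secondary point is certifying polynomiality rather than mere quasi-polynomiality of the enumerating functions.
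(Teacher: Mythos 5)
Your proposal takes essentially the same route as the paper: reduce, via Proposition~\ref{prop:stab}, to counting lattice points in a parametric polytope, invoke Ehrhart/vector-partition-function theory to get piecewise quasi-polynomiality, and finish with an explicit Barvinok-style computation yielding the 36 cones, genuine (not merely quasi-) polynomials, and the symmetry. The only presentational difference is that the paper works from the start in the fundamental-weight coordinates $\lambda=k_1\varpi_1+k_2\varpi_1^*$, under which $\lambda\mapsto\lambda^*$ is the swap $k_1\leftrightarrow k_2$, so the equality is read off as invariance of the single computed list of (cone, polynomial) pairs under that swap rather than as a comparison of two separately computed lists.
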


The 36 polynomial functions and cones are given in
Section~\ref{sec:SL4nr2}.
 As a consequence of Propositions~\ref{prop:stab} and \ref{prop:introSL4nr2}, we get

\begin{coro} Let $n\geq 4$. 
  Conjecture~\ref{conj:setLR} holds for $\GL_n(\CC)$ assuming in addition
  that $\mu$ is near-rectangular.
\end{coro}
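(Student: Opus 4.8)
The plan is to deduce this Corollary formally from Propositions~\ref{prop:stab} and~\ref{prop:introSL4nr2}: the first collapses the statement, for every $n\geq 4$, to the single case $n=4$, and the second disposes of that case. Before invoking them I would make two harmless reductions. Since $V_n(\lambda_1\lambda_2^{n-2}\lambda_n)\simeq\det^{\lambda_n}\otimes V_n((\lambda_1-\lambda_n)(\lambda_2-\lambda_n)^{n-2})$ (and likewise for $\mu$), and since tensoring a $\GL_n(\CC)$-module with a fixed power of the determinant merely translates the indexing weights along an injection $\Lambda_n\to\Lambda_n$, the multiset $\{c_{\lambda\mu}^\nu:\nu\in\Lambda_n\}$ is unchanged by such twists; so I may assume $\lambda_n=\mu_n=0$, putting $\lambda=\lambda_1\lambda_2^{n-2}$ and $\mu=\mu_1\mu_2^{n-2}$ in the shape required by Proposition~\ref{prop:stab}, and then $\lambda^*=\lambda_1(\lambda_1-\lambda_2)^{n-2}$ is again near-rectangular, so both Propositions apply equally to $(\lambda^*,\mu)$. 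Also, the case $c=0$ of the asserted identity is trivial (both sides count the cofinite complement in $\Lambda_n$ of the finite support of the relevant $c_{\bullet\bullet}^\bullet$), so it suffices to prove, for $c\geq 1$, that the multiset of \emph{positive} Littlewood--Richardson coefficients $\{c_{\lambda\mu}^\nu:\nu\in\Lambda_n,\ c_{\lambda\mu}^\nu\geq 1\}$ equals $\{c_{\lambda^*\mu}^\nu:\nu\in\Lambda_n,\ c_{\lambda^*\mu}^\nu\geq 1\}$.

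Next I would use Proposition~\ref{prop:stab} to stabilize in $n$. It says that the $\nu$ with $c_{\lambda\mu}^\nu\geq 1$ are exactly those of the shape $\nu=\nu_1\nu_2(\lambda_2+\mu_2)^{n-4}\nu_{n-1}\nu_n$ for which the displayed inequalities hold and $M\leq m$, and that then $c_{\lambda\mu}^\nu=m-M+1$ with $M,m$ depending only on $\lambda_1,\lambda_2,\mu_1,\mu_2$ and the four integers $\nu_1,\nu_2,\nu_{n-1},\nu_n$. Hence $\nu\mapsto(\nu_1,\nu_2,\nu_{n-1},\nu_n)$ identifies the finite support of $c_{\lambda\mu}^\bullet$ with a set of quadruples which, together with the attached values $m-M+1$, does not involve $n$. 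Reading this at $n=4$ gives, for every $n\geq 4$, an equality of multisets
\[
\{c_{\lambda\mu}^\nu:\nu\in\Lambda_n,\ c_{\lambda\mu}^\nu\geq 1\}=\{c_{\bar\lambda\bar\mu}^{\bar\nu}:\bar\nu\in\Lambda_4,\ c_{\bar\lambda\bar\mu}^{\bar\nu}\geq 1\},
\]
where $\bar\lambda=\lambda_1\lambda_2\lambda_2\cdot 0$ and $\bar\mu=\mu_1\mu_2\mu_2\cdot 0$ lie in $\Lambda_4^\nr$. The same reasoning applies verbatim to $(\lambda^*,\mu)$, and I would record that $\overline{\lambda^*}=(\bar\lambda)^*$, which is immediate from $\lambda^*=\lambda_1(\lambda_1-\lambda_2)^{n-2}$.

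It then remains to settle $n=4$, i.e. to show that $\{c_{\bar\lambda\bar\mu}^{\bar\nu}\geq 1\}$ and $\{c_{(\bar\lambda)^*\bar\mu}^{\bar\nu}\geq 1\}$ agree as multisets for near-rectangular $\bar\lambda,\bar\mu\in\Lambda_4^\nr$. Here I would quote Proposition~\ref{prop:introSL4nr2}: $\Nb_4(c_{\bar\lambda\bar\mu}^\bullet>c)=\Nb_4(c_{(\bar\lambda)^*\bar\mu}^\bullet>c)$ for all $c$, and the difference of these counts at $c-1$ and at $c$ (both finite for $c\geq 1$) equals $\#\{\bar\nu:c_{\bar\lambda\bar\mu}^{\bar\nu}=c\}$, respectively $\#\{\bar\nu:c_{(\bar\lambda)^*\bar\mu}^{\bar\nu}=c\}$; so these agree for every $c\geq 1$, which is the wanted multiset equality. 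Chaining the three steps gives $\#\{\nu\in\Lambda_n:c_{\lambda\mu}^\nu=c\}=\#\{\nu\in\Lambda_n:c_{\lambda^*\mu}^\nu=c\}$ for all $c\in\NN$, which is Conjecture~\ref{conj:setLR} for $\GL_n(\CC)$ whenever $\mu$ is near-rectangular.

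The hard part is not this deduction, which is pure bookkeeping once the two Propositions are granted — the only delicate points being that the $\det$-twists do not perturb the multiset of coefficients and that $*$ commutes with the reduction from $\Lambda_n$ to $\Lambda_4$ after normalizing $\lambda_n=0$ — but rather everything hidden inside Proposition~\ref{prop:introSL4nr2}. One could dream of bypassing its computer-assisted enumeration of the $7$-to-$36$ cones and instead matching, by hand, the multiset $\{\,m-M+1\,\}$ of Proposition~\ref{prop:stab} for $(\lambda,\mu)$ with that for $(\lambda^*,\mu)$ directly from the closed formulas; but this is precisely what makes Conjecture~\ref{conj:setLR} unexpected, since (as noted in Section~\ref{sec:SL3}) no affine rearrangement of the $\nu$'s achieves it, so any such matching would have to be a genuinely chamber-by-chamber reshuffling of the interval lengths $m-M+1$ — exactly the combinatorial content that Proposition~\ref{prop:introSL4nr2} repackages.
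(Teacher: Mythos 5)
Your proof is correct and is essentially the argument the paper has in mind: the paper states the corollary as "a consequence of Propositions~\ref{prop:stab} and \ref{prop:introSL4nr2}" without spelling out the deduction, and you have supplied exactly that — the $\det$-twist normalization to $\lambda_n=\mu_n=0$, the identification (via Proposition~\ref{prop:stab}) of the support and values of $c_{\lambda\mu}^\bullet$ for $\GL_n$ with those for $\GL_4$ compatibly with $*$, and the passage from $\Nb_4(c_{\lambda\mu}^\bullet>c)=\Nb_4(c_{\lambda^*\mu}^\bullet>c)$ to the level-by-level multiset equality by differencing. The checks you flag as delicate (that the twist does not perturb the multiset of coefficients, and that $\overline{\lambda^*}=(\bar\lambda)^*$) are the right ones and are handled correctly.
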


A much weaker version of Conjecture~\ref{conj:setLR} is

  \begin{conj}
\label{conj:nbLR}
  If $\lambda\in\Lambda_n$ is near-rectangular then
$$
\# \{\nu\in \Lambda_n \,:\,c_{\lambda\mu}^\nu\neq 0\}=
\# \{\nu\in \Lambda_n \,:\,c_{\lambda^*\mu}^\nu\neq 0\}.
$$
\end{conj}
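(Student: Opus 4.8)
The plan is to recast the statement as an equality of lattice-point counts and then to understand why those counts agree. By the saturation theorem of Knutson and Tao \cite{KT:saturation}, for fixed $\lambda,\mu\in\Lambda_n$ the set $\{\nu\in\Lambda_n\,:\,c_{\lambda\mu}^\nu\neq 0\}$ is exactly the set of lattice points of the rational polytope
$$
P_{\lambda,\mu}=\big\{\nu\in\RR^n\ :\ \nu_1\geq\cdots\geq\nu_n,\ {\textstyle\sum_i\nu_i=\sum_i\lambda_i+\sum_i\mu_i},\ \Horn(\lambda,\mu,\nu)\big\},
$$
where $\Horn(\lambda,\mu,\nu)$ denotes the Horn system of inequalities. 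Thus Conjecture~\ref{conj:nbLR} is equivalent to $\#(P_{\lambda,\mu}\cap\ZZ^n)=\#(P_{\lambda^*,\mu}\cap\ZZ^n)$ for $\lambda$ near-rectangular. Two reductions are worth recording first: tensoring by a power of the determinant lets us assume $\lambda_n=0$, and then $\lambda^*$ is again near-rectangular with $\lambda^*_1=\lambda_1$, so the two polytopes are of the same combinatorial type; and the identity $V_n(\lambda^*)\otimes V_n(\mu)\simeq\det^{\lambda_1}\otimes V_n(\lambda)^*\otimes V_n(\mu)$ shows that $\#(P_{\lambda^*,\mu}\cap\ZZ^n)$ is the number of dominant weights $\alpha$ of $\GL_n(\CC)$ with $V_n(\mu)\subseteq V_n(\lambda)\otimes V_n(\alpha)$, so that the conjecture asserts that $\mu$ has equal in- and out-degree in the directed graph on dominant weights with an edge $\alpha\to\beta$ whenever $V_n(\beta)$ occurs in $V_n(\lambda)\otimes V_n(\alpha)$.

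Next I would use the hypothesis on $\lambda$ to shrink the Horn system: for $\lambda=\lambda_1\lambda_2^{n-2}\lambda_n$ the overwhelming majority of the Horn inequalities involving $\lambda$ ought to be redundant, leaving only a short explicit family indexed by a bounded amount of data, and likewise for $\lambda^*$. The objective is an irredundant description of $P_{\lambda,\mu}$ and $P_{\lambda^*,\mu}$ that is uniform in $n$.

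With such descriptions in hand there are two natural routes. The first is to construct a piecewise $\GL_n(\ZZ)$-affine bijection $P_{\lambda,\mu}\cap\ZZ^n\longto P_{\lambda^*,\mu}\cap\ZZ^n$: as the paper already observes, and as the piecewise-polynomial shapes of $\Nb_n$ in Propositions~\ref{prop:Nc3} and~\ref{prop:introSL4nr2} confirm, no single linear map can work, so the bijection must be assembled chamber by chamber -- the chambers being cut out by the same hyperplanes that govern those piecewise-polynomial formulas, and the local affine maps being read off, then extrapolated, from the explicit coefficients of Proposition~\ref{prop:stab} in the sub-case where $\mu$ too is near-rectangular. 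The second, more automatic, route is to observe that $(\lambda,\mu)\mapsto\#(P_{\lambda,\mu}\cap\ZZ^n)$ and $(\lambda,\mu)\mapsto\#(P_{\lambda^*,\mu}\cap\ZZ^n)$ are both piecewise quasi-polynomial on the cone of near-rectangular pairs, since the vertices of these parametric polytopes move piecewise-linearly in $(\lambda,\mu)$; the identity would then follow by checking it on finitely many points in each chamber, with Proposition~\ref{prop:stab} and the $n=3,4$ tables of Propositions~\ref{prop:Nc3} and~\ref{prop:introSL4nr2} supplying seed values.

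The main obstacle is the one the paper itself flags. None of the known symmetries, stabilities or reductions of Littlewood-Richardson coefficients -- nor any of the standard combinatorial models (hives, honeycombs, puzzles) -- implements the substitution $\lambda\leftrightarrow\lambda^*$ with $\mu$ fixed and $\nu$ free, so the bijection, or the quasi-polynomial identity, must be produced essentially by hand. Concretely, the bottleneck is that we possess no usable description of $c_{\lambda\mu}^\nu$, or even of its support, when $\lambda$ is near-rectangular but $\mu$ is arbitrary: Proposition~\ref{prop:stab} covers only the case where $\mu$ too is near-rectangular, and the chambers reached by a general $\mu$ are not accessible from that sub-case, so the interpolation argument cannot be closed without genuinely new input. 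For that reason I would expect Conjecture~\ref{conj:nbLR} -- like Conjecture~\ref{conj:setLR} -- to remain open beyond the ranges ($n=3$, or $\mu$ near-rectangular) already settled, unless one finds the right ``flip'' at the level of honeycombs or puzzles.
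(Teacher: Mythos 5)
The statement you were asked to prove is \emph{Conjecture}~\ref{conj:nbLR}: the paper does not prove it, and your proposal, which ends by concluding that the conjecture is not settled by the available tools, is a correct assessment rather than a gap. The paper's own ``treatment'' is exactly the computational one you describe: Proposition~\ref{prop:nbisquasipol} establishes piecewise quasi-polynomiality of $\Nb_n(c_{\lambda\mu}^\bullet>0)$ from the Horn-cone/saturation description, and the explicit cones and (quasi-)polynomials are then produced by Barvinok's algorithm for $n=3$ (Proposition~\ref{prop:calculSL3}) and $n=4$ (Proposition~\ref{prop:calculSL4nr}), which verifies the conjecture only in those ranges, together with the case of $\mu$ near-rectangular via Propositions~\ref{prop:stab} and~\ref{prop:calculSL4nr2}. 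Your two reformulations (the $\det^{\lambda_1}$ twist and the in/out-degree symmetry of the tensor graph) are correct and pleasant, but as you yourself point out, neither closes the argument.

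Two small cautions on the details. First, the remark that $P_{\lambda,\mu}$ and $P_{\lambda^*,\mu}$ are ``of the same combinatorial type'' is unjustified as stated: the whole content of the piecewise structure in Propositions~\ref{prop:Nc3} and~\ref{prop:introSL4nr2} is precisely that the combinatorial type of these parametric polytopes changes with $(\lambda,\mu)$, and nothing forces $(\lambda,\mu)$ and $(\lambda^*,\mu)$ into corresponding chambers (indeed the matching of cones under $k_1\leftrightarrow k_2$ is an output of the computation, not an input). Second, the interpolation route via quasi-polynomiality requires, beyond seed values, a matching of the two chamber complexes; you are right that this cannot be bootstrapped from the $\mu$-near-rectangular sub-case, which lives on a lower-dimensional face of the parameter cone.

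Your closing hunch that the missing ingredient is a combinatorial ``flip'' at the level of a lattice-point model is vindicated: the paper's final introductory remark records that Grinberg~\cite{DG} subsequently constructed a piecewise-linear involution $\varphi$ of $\ZZ^n$ with $c_{\lambda\mu}^\nu=c_{\lambda^*\mu}^{\varphi(\nu)}$ whenever $\lambda$ is near-rectangular (with the twist that $\varphi(\nu)$ need not be a partition, in which case both sides vanish), thereby proving Conjecture~\ref{conj:setLR} and hence Conjecture~\ref{conj:nbLR}. That is exactly the chamber-by-chamber affine bijection of your first ``route''.
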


Equivalently, we ask whether, for $\lambda\in\Lambda_n^\nr$, 
$$
\forall\mu\in\Lambda_n\qquad \Nb_n(c_{\lambda\mu}^\bullet>0)=\Nb_n(c_{\lambda^*\mu}^\bullet> 0).
$$
For $n=4$ and $\lambda$ near-rectangular,
 we computed $\Nb_4(c_{\lambda\mu}^\bullet>0)$ and checked
 Conjecture~\ref{conj:nbLR}. Here we report on this computation as follows (see Section~\ref{sec:GL4conj2} for details).

\begin{prop}
  The function 
$$
\begin{array}{cccl}
\Nb_4(c_{\lambda\mu}^\bullet> 0)\,:&  \Lambda_4^\nr\times \Lambda_4&\longto &\NN\\
&(\lambda,\mu)&\longmapsto&\#\{
\nu\in\Lambda_n\,:\, c_{\lambda\mu}^\nu> 0
\}
\end{array}
$$
is piecewise quasi-polynomial\footnote{See Section~\ref{sec:Horncone}
  right after Proposition~\ref{prop:nbisquasipol} for a definition.} of degree 3 with 205 cones.
The only congruence occurring is the parity of $\lambda_1+|\mu|$.
Moreover,
\begin{equation}
 \Nb_4(c_{\lambda\mu}^\bullet> 0)=\Nb_4(c_{\lambda^*\mu}^\bullet> 0).\label{eq:4nb}
\end{equation}
This symmetry with the complete duality $(\lambda,\mu)\longmapsto
(\lambda^*,\mu^*)$ gives an action of $(\ZZ/2\ZZ)^2$. This group acts
on the 205 pairs (cone,quasi-polynomial) with 83 orbits.
\end{prop}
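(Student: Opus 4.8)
The proof is computer-assisted; the purpose of the argument below is to turn $\Nb_4(c_{\lambda\mu}^\bullet>0)$ into a finite, rigorous enumeration. By the saturation theorem \cite{KT:saturation}, for $\lambda,\mu\in\Lambda_4$ one has $c_{\lambda\mu}^\nu\neq 0$ exactly when $(\lambda,\mu,\nu)$ lies in the Horn cone $\mathcal H\subset(\RR^4)^3$, and $\mathcal H$ is a rational polyhedral cone with an explicit finite list of facets (dominance and non-negativity of $\lambda$, $\mu$, $\nu$, the trace equality $|\nu|=|\lambda|+|\mu|$, and the rank-$4$ Horn inequalities). Intersecting $\mathcal H$ with $\{\lambda_2=\lambda_3\}$, and using that $\Nb_4(c_{\lambda\mu}^\bullet>0)$ is unchanged under $\lambda\mapsto\lambda+(1^4)$ and $\mu\mapsto\mu+(1^4)$ to reduce to $\lambda_4=\mu_4=0$, we obtain a rational polyhedral cone $\mathcal H^\nr$ whose lattice points are precisely the triples $(\lambda,\mu,\nu)$ with $\lambda\in\Lambda_4^\nr$, $\mu\in\Lambda_4$ and $c_{\lambda\mu}^\nu\neq0$. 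Then $\Nb_4(c_{\lambda\mu}^\bullet>0)$ is the number of lattice points in the fibre over $(\lambda,\mu)$ of the projection $\mathcal H^\nr\to\Lambda_4^\nr\times\Lambda_4$; this fibre is a polytope, bounded (it sits inside a box cut out by the Horn inequalities) and of dimension at most $3$ because $|\nu|$ is pinned.

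By the structure theorem for the number of lattice points in a parametric rational polytope, this count is, on the cells of a polyhedral subdivision of the parameter space into cones, a quasi-polynomial in the parameters whose degree equals the fibre dimension and whose period is controlled by the denominators of the vertices of the fibres. This already yields that $\Nb_4(c_{\lambda\mu}^\bullet>0)$ is piecewise quasi-polynomial of degree $\le3$; the generic fibre being genuinely $3$-dimensional forces degree $3$. A computer implementation of parametric lattice-point enumeration applied to $\mathcal H^\nr$ then produces the list of pairs (cone, quasi-polynomial): one finds $205$ such pairs, and the only periodicity that occurs has period $2$, namely the parity of $|\lambda|+|\mu|$, which with the chosen normalisation is the parity of $\lambda_1+|\mu|$. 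The explicit list is recorded in Section~\ref{sec:GL4conj2}.

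For the symmetries, note first that the full duality is \emph{not} the subtle part: since $\bigl(V_4(\lambda)\otimes V_4(\mu)\bigr)^{*}\simeq V_4(\lambda^{*})\otimes V_4(\mu^{*})$ up to a power of $\det$, the affine bijection $\nu\mapsto\bigl(\lambda_1+\mu_1-\nu_4,\ \lambda_1+\mu_1-\nu_3,\ \lambda_1+\mu_1-\nu_2,\ \lambda_1+\mu_1-\nu_1\bigr)$ matches $\{\nu:c_{\lambda\mu}^\nu\neq0\}$ with $\{\rho:c_{\lambda^*\mu^*}^\rho\neq0\}$ bijectively, so $\Nb_4(c_{\lambda\mu}^\bullet>0)=\Nb_4(c_{\lambda^*\mu^*}^\bullet>0)$ holds \emph{a priori}. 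The content of \eqref{eq:4nb} is the half-duality $\sigma_1\colon(\lambda,\mu)\mapsto(\lambda^*,\mu)$, which is the special case of Conjecture~\ref{conj:nbLR} we are verifying and genuinely requires the explicit data. Both $\sigma_1$ and the full duality are linear involutions of the normalised parameter space, hence so is $\sigma_2\colon(\lambda,\mu)\mapsto(\lambda,\mu^*)$, and $\langle\sigma_1,\sigma_2\rangle\cong(\ZZ/2\ZZ)^2$. Once \eqref{eq:4nb} is established, the assertion about $83$ orbits is pure bookkeeping: apply $\sigma_1$ and $\sigma_2$ to the $205$ cones, verify that the chamber decomposition and the attached quasi-polynomials are permuted consistently, and count orbits.

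The main obstacle is thus establishing \eqref{eq:4nb} itself. As the $205$ cones and quasi-polynomials are explicit, this is a \emph{finite} verification: one checks cone by cone that $\sigma_1$ preserves the chamber decomposition and identifies the quasi-polynomial on each chamber with the one on its $\sigma_1$-image. The real difficulty is of a computational nature — making sure the Horn description used is complete, the chamber enumeration exhaustive, and the quasi-polynomials valid on the closed cones and not merely their interiors — so we cross-check the output by evaluating $\Nb_4$ at many explicit pairs $(\lambda,\mu)$ against a direct Littlewood--Richardson computation, and by confirming \eqref{eq:4nb} numerically on those pairs before recording it as an identity of piecewise quasi-polynomials.
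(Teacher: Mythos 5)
Your proposal is correct and follows essentially the same route as the paper: represent $\Nb_4(c_{\lambda\mu}^\bullet>0)$ via saturation as a lattice-point count in an affine section of the (near-rectangular slice of the) Horn cone, invoke the general theory of parametric polytopes to get piecewise quasi-polynomiality, run Barvinok's algorithm to produce the explicit list of 205 (cone, quasi-polynomial) pairs, and then verify the $\lambda\leftrightarrow\lambda^*$ symmetry by a finite check on that list; the paper's "proof" is precisely this, pointing to Propositions~\ref{prop:nbisquasipol} and~\ref{prop:horn4nr} and to the ISCC output collected in the supplementary material. Your added remark that $(\lambda,\mu)\mapsto(\lambda^*,\mu^*)$ is the a priori symmetry while $(\lambda,\mu)\mapsto(\lambda^*,\mu)$ is the genuine content is a small but worthwhile clarification that the paper handles implicitly via its earlier identity~\eqref{eq:symNb}.
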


\bigskip
This work is based on numerous computer aided computations with Barvinok
\cite{iscc}, Normaliz \cite{normaliz} and SageMath \cite{sage}. Details on these computations can be found on
the webpage of the second author \cite[Supplementary material]{wp}.

\bigskip
\noindent{\bf Acknowledgements.} We are very grateful to Vincent Loechner, who helped us in the use
of ISCC implementation of Barvinok's algorithm.  
We also want to thank Dipendra Prasad for useful discussions on
\cite{PW:multiplicities}, which motivated this work.
Finally, we are very grateful to the anonymous referee for his
numerous advice which helped improving the article.

The authors are partially supported by the French National Agency
(Project GeoLie ANR-15-CE40-0012).

\bigskip
\begin{rem}
After a version of this work was posted on ArXiv, Darij Grinberg
offered a solution of our main conjecture in
\cite{DG}. Therein he defines a piecewise linear involution $\varphi$  from
$\ZZ^n$ to $\ZZ^n$ satisfying
$$
\forall \nu\in \Lambda_n\qquad c_{\lambda\mu}^\nu=c_{\lambda^*\mu}^{\varphi(\nu)},
$$
if $\lambda$ is near-rectangular, thus solving our conjecture. 
An amazing fact is that
this bijection does not necessarily map a partition to a partition:
if $\varphi(\nu)$ is not a partition then  $c_{\lambda\mu}^\nu$ simply
vanishes, allowing $\varphi$ to work.
\end{rem}

\section{Generalities on the function $\Nb_n(c_{\lambda\mu}^\bullet> c)$}
\label{sec:Horncone}

Recall that for $\lambda,\mu\in\Lambda_n$ and $c\in\NN$ we set
$$
\Nb_n(c_{\lambda\mu}^\bullet>c)=\#\{\nu\in\Lambda_n\,:\, c_{\lambda\mu}^\nu>c\}.
$$

Since $V_n(\lambda)\otimes V_n(\mu)\simeq V_n(\mu)\otimes
V_n(\lambda)\simeq (V_n(\lambda^*)\otimes V_n(\mu^*))^*$ as $\SL_n(\CC)$-modules,  the
function $\Nb_n(c_{\lambda\mu}^\bullet>c)$ satisfies
\begin{equation}
  \label{eq:symNb}
  \Nb_n(c_{\lambda\mu}^\bullet>c)=\Nb_n(c_{\mu\lambda}^\bullet>c)=\Nb_n(c_{\lambda^*\mu^*}^\bullet>c)=\Nb_n(c_{\mu^*\lambda^*}^\bullet>c).
\end{equation}

Let $1^n\in\Lambda_n$ denote the partition  with $n$ parts equal to
$1$. Then $V_n(1^n)$ is the one dimensional representation of
$\GL_n(\CC)$ given by the determinant. In particular, as an
$\SL_n(\CC)$-module $V_n(\lambda)\simeq V_n(\lambda-k^n)$ 
for any $k$.
Set $\Lambda_n^0=\{\lambda\in\Lambda_n\,:\,\lambda_n=0\}$.
It can be seen as the set of dominant weights for the group  $\SL_n(\CC)$.
For $\lambda\in\Lambda_n$, set $\bar
\lambda=\lambda-\lambda_n^n$, the partition obtained by substracting
$\lambda_n$ to each part of $\lambda$. Then

\begin{equation}
  \label{eq:NbredSL}
  \Nb_n(c_{\lambda\mu}^\bullet>c)=\Nb_n(c_{\bar\lambda\bar\mu}^\bullet>c), 
\qquad\mathrm{and\ even\ more}\qquad
c_{\lambda\mu}^\nu=c_{\bar\lambda\bar\mu}^{\nu-(\lambda_n+\mu_n)^n}.
\end{equation}

Set 
$$
\Horn_n=\{(\lambda,\mu,\nu)\in(\Lambda_n)^3\,:\,c_{\lambda\mu}^\nu\neq 0\}.
$$

By a Brion-Knop's result (see \cite{Elash}), $\Horn_n$ is a finitely generated semigroup. The
Knutson-Tao saturation theorem \cite{KT:saturation} shows that
$\Horn_n$ is the set of integer points in a convex cone, the Horn
cone. 
The Horn cone is polyhedral and the minimal list of inequalities
defining it is known (see {\it e.g.}
\cite{Fulton:survey,Belk:c1,KTW}). These inequalities contain the Weyl
inequalities
\begin{equation}
  \label{eq:Weyl}
  \nu_{i+j-1}\leq \lambda_i+\mu_j
\quad{\rm\ whenever\ }i+j-1\leq n;
\end{equation}
and are all of the form
\begin{equation}
  \label{eq:Horn}
  \sum_{k\in K}\nu_k\leq \sum_{i\in I}\lambda_i+\sum_{j\in J}\mu_j,
\end{equation}
for some triples $(I,J,K)$ of three subsets of $\{1,\dots,n\}$ of the same
cardinality.

\begin{prop}
\label{prop:nbisquasipol}
Fix $n\geq 0$. The function
$$
\begin{array}{cccl}
 \Nb_n(c_{\lambda\mu}^\bullet> 0)\,:& \Lambda_n\times \Lambda_n&\longto&\NN\\
&(\lambda,\mu)&\longmapsto&\#\{\nu\in \Lambda_n \,:\,c_{\lambda\mu}^\nu> 0\}.
\end{array}
$$  
is piecewise quasi-polynomial.
\end{prop}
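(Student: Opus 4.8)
The plan is to realize $\Nb_n(c_{\lambda\mu}^\bullet>0)$ as a point-counting function over the integer points of a family of polytopes varying linearly with the parameters $(\lambda,\mu)$, and then invoke the theory of vector partition functions (Sturmfels, Blakley; see also Barvinok–Pommersheim). First I would fix $\lambda,\mu\in\Lambda_n$ and observe, using the saturation theorem of Knutson–Tao and the Brion–Knop finite generation recalled in the excerpt, that $\{\nu\in\Lambda_n:c_{\lambda\mu}^\nu>0\}$ is exactly the set of integer points $\nu$ such that $(\lambda,\mu,\nu)\in\Horn_n$, i.e. the integer points of the polytope
\[
P_{\lambda,\mu}=\{\nu\in\RR^n\,:\,\nu_1\geq\cdots\geq\nu_n\geq 0,\ \text{and }(\lambda,\mu,\nu)\text{ satisfies the Horn inequalities }\eqref{eq:Horn}\}.
\]
Because all the defining inequalities of $\Horn_n$ are linear in $(\lambda,\mu,\nu)$, the family $\{P_{\lambda,\mu}\}$ is the fiber family of a single rational polyhedral cone $\mathcal C\subseteq\RR^n_{\lambda}\times\RR^n_{\mu}\times\RR^n_{\nu}$ under the projection $\pi:(\lambda,\mu,\nu)\mapsto(\lambda,\mu)$, and $\Nb_n(c_{\lambda\mu}^\bullet>0)=\#\big(\pi^{-1}(\lambda,\mu)\cap\mathcal C\cap\ZZ^{3n}\big)$.

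Next I would apply the structure theorem for such fiberwise lattice-point counts: the function sending an integer point $(\lambda,\mu)$ in the (rational polyhedral) image cone $\pi(\mathcal C)$ to the number of lattice points in the fiber $\pi^{-1}(\lambda,\mu)\cap\mathcal C$ is a piecewise quasi-polynomial function of $(\lambda,\mu)$. This is the content of the theory of partition functions / Ehrhart-type results for families of polytopes with linearly varying facets; concretely one can cite Sturmfels \cite{} (vector partition functions are piecewise quasi-polynomial) after triangulating $\mathcal C$ into simplicial cones, or invoke Barvinok's algorithm, whose correctness furnishes exactly such a piecewise quasi-polynomial description. The decomposition of $\pi(\mathcal C)$ into the chambers on which a single quasi-polynomial formula holds is governed by the arrangement of hyperplanes spanned by images of faces of $\mathcal C$, which is finite; hence finitely many cones and finitely many quasi-polynomials, as claimed.

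The one technical point to be careful about is that $P_{\lambda,\mu}$ need not be bounded a priori — the Horn inequalities bound $\nu$ from above, but one must check that together with $\nu_1\geq\cdots\geq\nu_n\geq 0$ they force boundedness; this is immediate from the Weyl inequality \eqref{eq:Weyl} with $i=j=1$, which gives $\nu_1\leq\lambda_1+\mu_1$, so $P_{\lambda,\mu}$ is a (possibly empty) polytope for every $(\lambda,\mu)$. A second, more cosmetic, point: the "polytope" is really counting a subset of $\Lambda_n$, so one works with the closed cone and its integer points directly; no interior/closure subtlety arises because $\Horn_n$ is literally the set of integer points of the closed Horn cone by Knutson–Tao. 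I expect the main obstacle to be purely expository — namely pinning down the precise reference for "fiberwise lattice-point count is piecewise quasi-polynomial" in the generality needed (non-simplicial $\mathcal C$, possibly empty fibers) — rather than any genuine mathematical difficulty; the reduction to a vector partition function after a simplicial subdivision disposes of it cleanly.
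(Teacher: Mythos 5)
Your argument is correct and is essentially the paper's own proof: both express $\Nb_n(c_{\lambda\mu}^\bullet>0)$ as a lattice-point count in the affine section $\Horn_n^\QQ\cap(\{(\lambda,\mu)\}\times\RR^n)$ using Knutson--Tao saturation, then invoke the multivariate Ehrhart / vector partition function machinery for polytopes with linearly varying facets. The only difference is cosmetic: you add the (correct, but implicit in the paper) observation that the Weyl inequality $\nu_1\leq\lambda_1+\mu_1$ guarantees boundedness of each fiber.
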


This means that there exist a sub-lattice $\Lambda$ of
$\ZZ^{2n}\supset \Lambda_n\times \Lambda_n$ of finite index and a collection of piecewise polynomial
functions (see Footnote~\ref{foot1}) parametrized by the classes in
$\ZZ^{2n}/\Lambda$. Applying to $(\lambda,\mu)$
the  piecewise polynomial function corresponding to its class, one
then gets $\Nb_n(c_{\lambda\mu}^\bullet> 0)$.

\begin{proof}
We have

\begin{equation}
\Nb_n(c_{\lambda\mu}^\bullet> 0)=\# \bigg (\Horn_n\cap
(\{(\lambda,\mu)\}\times\Lambda_n)\bigg).\label{eq:NBaff}
\end{equation}

Consider the Horn cone $\Horn_n^\QQ$ generated by $\Horn_n$. By the discussion above this proposition, it is defined as a subset of $\QQ^{3n}$ by an explicit list of linear inequalities, namely the Horn inequalities~\eqref{eq:Horn}. Knutson-Tao's saturation Theorem \cite{KT:saturation} asserts that $\Horn_n$ is precisely the set of integer points (that is, belonging to $(\Lambda_n)^3$) in $\Horn_n^\QQ$. Now, equality~\eqref{eq:NBaff} describes $\Nb_n(c_{\lambda\mu}^\bullet> 0)$ as the number of integer points in the affine section of the Horn cone obtained by fixing $(\lambda,\mu)$.  

Moreover each inequality~\ref{eq:Horn} of the Horn cone depends linearly on
$(\lambda,\,\mu)$. Now the conclusion is a consequence of the general
theory of multivariate Ehrhart polynomials (see {\it e.g.} \cite[Theorem~1.1]{BBKLV} or \cite{Sturm:vectpart}).
\end{proof}

  

\section{The hive model}

For later use, we shortly review the hive model that expresses the
Littlewood-Richardson coefficients as the number of integer points in
polyhedra.
Fix an integer $n\geq 2$. 

Let $\lambda$, $\mu$ and $\nu$ in $\Lambda_n$ such that
$|\nu|=|\lambda|+|\mu|$.
Otherwise $c_{\lambda\mu}^\nu=0$.
Label the   $\frac{(n+1)(n+2)}2$ vertices of the triangles
on Figure~\ref{fig:hive} with integers, with the labels on the
boundaries determined by $\lambda,\mu$
and $\nu$ as drawn on the left.

\def\nhive{4}
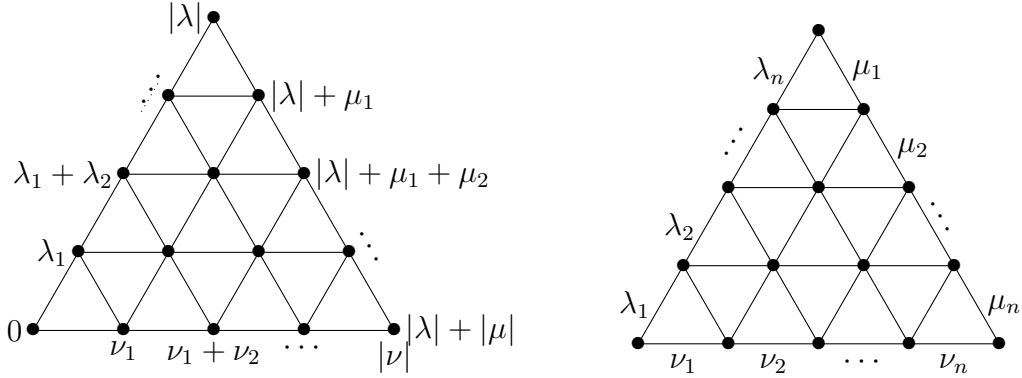
\begin{figure}
\centering
\begin{tikzpicture}[scale=1.2]
\foreach \x in {0,1,...,\nhive} {
\draw (\x,0) -- ++(60:\nhive-\x);
\draw (\x,0) -- ++(120:\x);
\draw (60:\x) -- ++(\nhive-\x,0);
      \foreach \y in {0,...,\x}{
	    \draw (60:\nhive-\x)++(\y,0) node{$\bullet$};
       };
};
\draw (0,0) node[left]{$0$};
\draw (60:1) node[left]{$\lambda_1$};
\draw (60:2) node[left]{$\lambda_1+\lambda_2$};
\draw (60:3) node[left,above,rotate=60]{$\cdots$};
\draw [dotted] (60:2.8)++(-0.2,0) --++(60:0.4);
\draw (60:4) node[left]{$|\lambda|$};
\draw (60:\nhive-1)++(1,0) node[right]{$|\lambda|+\mu_1$};
\draw (60:\nhive-2)++(2,0) node[right]{$|\lambda|+\mu_1+\mu_2$};
\draw (60:1)++(3.05,0.35) node[right,rotate=-60]{$\cdots$};
\draw (0,0)++(\nhive,0) node[right]{$|\lambda|+|\mu|$};
\draw (\nhive,0) node[below]{$|\nu|$};
\draw (1,0) node[below]{$\nu_1$};
\draw (2,0) node[below]{$\nu_1+\nu_2$};
\draw (3,0) node[below]{$\cdots$};
\end{tikzpicture} 
\hspace{2em}
\begin{tikzpicture}[scale=1.2]
\foreach \x in {0,1,...,\nhive} {
\draw (\x,0) -- ++(60:\nhive-\x);
\draw (\x,0) -- ++(120:\x);
\draw (60:\x) -- ++(\nhive-\x,0);
      \foreach \y in {0,...,\x}{
	    \draw (60:\nhive-\x)++(\y,0) node{$\bullet$};
       };
};
\draw (60:0.5) node[left]{$\lambda_1$};
\draw (60:1.5) node[left]{$\lambda_2$};
\draw (60:2.5) node[left,above,rotate=60]{$\cdots$};
\draw (60:\nhive-0.5) node[left]{$\lambda_n$};
\draw (60:\nhive-0.5)++(0.5,0) node[right]{$\mu_1$};
\draw (60:\nhive-1.5)++(1.5,0) node[right]{$\mu_2$};
\draw (60:1.6)++(2.55,0) node[rotate=300]{$\cdots$};
\draw (60:0.5)++(\nhive-0.5,0) node[right]{$\mu_n$};
\draw (\nhive-0.5,0) node[below]{$\nu_n$};
\draw (\nhive-1.5,0) node[below]{$\cdots$};
\draw (0.5,0) node[below]{$\nu_1$};
\draw (1.5,0) node[below]{$\nu_2$};
\end{tikzpicture}
\caption{Hives with boundary conditions}
\label{fig:hive} 
\end{figure}

Then, neighbouring vertices define three distinct types of rhombus (see Figure~\ref{fig:rombus}), each coming with
its own constraint condition on the integers labelling the vertices. For each extracted rhombus we impose the
following constraint (using the notation of Figure~\ref{fig:rombus}):
\begin{equation}
b+c \geq a+d 
\label{HCvertex}
\end{equation} 

Alternatively, one can label the edges of the drawing rather than the vertices: each edge is labelled by the difference between the
values on its vertices, with an orientation as shown on the right of Figure~\ref{fig:hive}. Then each rhombus gives also a constraint on the labels of the edges, equivalent to the previous one on the labels of the vertices (still using the notation of Figure~\ref{fig:rombus}):
\begin{equation}
\beta\geq \delta
\qquad{\rm or\ equivalently}\qquad
\alpha\geq\gamma.
\label{HCvertexgreek}
\end{equation}
The equivalence between these two inequalities comes from the fact that $\alpha+\delta=\beta+\gamma$.

\begin{figure}
\centering
  \begin{tabular}{c@{\hspace{1cm}}c@{\hspace{1cm}}c}
    $R_1$&$R_2$&$R_3$\\
    \begin{tikzpicture}[scale=1]
      \draw (0,0) node{$\bullet$} node[left]{$c$}-- (60:1)
      node{$\bullet$} --++(-1,0) node{$\bullet$} node[left]{$a$}--
      (0,0); \draw (0,0)--(1,0) node{$\bullet$} node[right]{$d$}--
      (60:1) node[right]{$b$}; \draw (0.5,0) node[below]{$\gamma$}
      ++(120:1) node[above]{$\alpha$}; \draw (120:0.5)
      node[left]{$\beta$} ++(1,0) node[right]{$\delta$};
    \end{tikzpicture}
         &
\begin{tikzpicture}[scale=1]
  \draw (1,0) node{$\bullet$} node[right]{$c$}-- (0,0) node{$\bullet$}
  node[left]{$b$}--(60:1) node{$\bullet$} node[above]{$a$} -- cycle;
  \draw (0,0)--(-60:1) node{$\bullet$} node[below]{$d$} -- (1,0);
  \draw (-60:0.5) node[left]{$\gamma$} ++(0.5,0) node[right]{$\beta$};
  \draw (60:0.5) node[left]{$\delta$} ++(0.5,0) node[right]{$\alpha$};
\end{tikzpicture}
               &
                 \begin{tikzpicture}[scale=1]
                   \draw (0,0) node{$\bullet$} node[left]{$a$}--
                   (60:1) node{$\bullet$} node[left]{$b$} --++(1,0)
                   node{$\bullet$} node[right]{$d$}-- (1,0)
                   node{$\bullet$} node[right]{$c$} -- cycle; \draw
                   (1,0) -- (60:1); \draw (0.5,0)
                   node[below]{$\beta$}; \draw (60:0.5)
                   node[left]{$\alpha$} ++(1,0) node[right]{$\gamma$};
                   \draw (60:1)++(0.5,0) node[above]{$\delta$};
                 \end{tikzpicture}
  \end{tabular}
\caption{Rhombi}
\label{fig:rombus}
\end{figure}

By definition a hive is a labelling in $\ZZ^{\frac{(n+1)(n+2)}2}$
satisfying  Inequalities~\eqref{HCvertex} for each one of the $3\frac{n(n-1)}2$
rhombi. The  Knutson-Tao's result is

\begin{theo} (see \cite[Appendix]{KT:saturation})
  Let $\lambda,\mu$ and $\nu$ in $\Lambda_n$. 
Then $c_{\lambda\mu}^\nu$ is the number of hives with boundary conditions
determined by $\lambda$, $\mu$ and $\nu$ as on the left of  Figure~\ref{fig:hive}.  
\end{theo}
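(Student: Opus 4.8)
The plan is to deduce the theorem from the classical Littlewood--Richardson rule by constructing an explicit bijection between the hives with the prescribed boundary and the Littlewood--Richardson skew tableaux of shape $\nu/\lambda$ and content $\mu$. (One may assume $|\nu|=|\lambda|+|\mu|$; otherwise $c_{\lambda\mu}^\nu=0$ and the boundary data on the left of Figure~\ref{fig:hive} is itself inconsistent.) I would first restate the target in ``chain'' form: by the Littlewood--Richardson rule, $c_{\lambda\mu}^\nu$ counts the chains of partitions $\lambda=\lambda^{(0)}\subseteq\lambda^{(1)}\subseteq\dots\subseteq\lambda^{(n)}=\nu$, each with at most $n$ parts, for which every skew shape $\lambda^{(k)}/\lambda^{(k-1)}$ is a horizontal strip with $\mu_k$ boxes and the induced filling of $\nu/\lambda$ (entry $k$ on the boxes of $\lambda^{(k)}/\lambda^{(k-1)}$) is semistandard with lattice (ballot) reverse reading word. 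It then suffices to put the hives in bijection with such chains.

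Next I would coordinatize the hive, writing $H(a,b)$ for the label at the vertex $a$ steps from the left edge and $b$ steps from the right edge ($a,b\ge 0$, $a+b\le n$), so that the boundary data of Figure~\ref{fig:hive} reads $H(0,b)=\lambda_1+\dots+\lambda_{n-b}$ on the left edge, $H(a,0)=|\lambda|+\mu_1+\dots+\mu_a$ on the right edge, and $H(a,n-a)=\nu_1+\dots+\nu_a$ on the base. To a hive $h$ I would then associate a chain of integer vectors $\lambda^{(0)},\dots,\lambda^{(n)}$ whose partial sums are recovered from the labels of $h$ along the lines of one fixed lattice direction, corrected by the left-boundary data (equivalently, reading the edge-labels of the edge version of the hive). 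A direct computation with the three boundary formulas identifies $\lambda^{(0)}=\lambda$, $\lambda^{(n)}=\nu$ and $|\lambda^{(k)}|-|\lambda^{(k-1)}|=\mu_k$, while conversely any chain as above rebuilds a unique vertex-labelling by partial summation. So the theorem reduces to the assertion that, under this correspondence, the rhombus inequalities~\eqref{HCvertex} for the three rhombus types $R_1,R_2,R_3$ are equivalent to the remaining conditions on the chain: that successive partitions interlace, $\lambda^{(k)}_j\ge\lambda^{(k-1)}_j\ge\lambda^{(k)}_{j+1}$ for all $j,k$ (which already forces each $\lambda^{(k)}$ to be a partition and each $\lambda^{(k)}/\lambda^{(k-1)}$ to be a horizontal strip), and that the associated filling is ballot.

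I expect this last equivalence to be the main obstacle. The interlacing half should come easily: it is a purely local statement comparing labels in two adjacent rows of the triangle, and it is exactly the ``one-rhombus-wide'' concavity built into the inequalities~\eqref{HCvertex}. The ballot condition is the delicate point: in chain form it is not local but is a concavity-type condition in the step index $k$ on the partial sums of the $\lambda^{(k)}$, so on the hive side it materializes only after telescoping whole strips of rhombus inequalities across the triangle; carrying this out while keeping the triangular hive indexing synchronized with the row-by-row reverse reading of the skew tableau, and tracking exactly how the left-boundary data ($\lambda$) enters, is where the real work lies. Once the dictionary is established, the hives with the given boundary are in bijection with the Littlewood--Richardson skew tableaux of shape $\nu/\lambda$ and content $\mu$, whose number is $c_{\lambda\mu}^\nu$ by the Littlewood--Richardson rule, completing the proof.

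Should one wish to avoid tableaux altogether, an alternative is to check directly that the hive count satisfies the two properties characterizing $c_{\lambda\mu}^\nu$: the Pieri rule, which is the degenerate case $\mu=(m)$ where a hive reduces to a single horizontal strip between $\lambda$ and $\nu$, and the associativity identity $\sum_\pi c_{\lambda\mu}^\pi c_{\pi\rho}^\sigma=\sum_\tau c_{\mu\rho}^\tau c_{\lambda\tau}^\sigma$, which corresponds to gluing two hives along a common edge; but the gluing step seems to require the octahedron recurrence, so the tableau bijection above is the more economical route. Either way, this is the content of the argument in the appendix to \cite{KT:saturation} referenced in the statement.
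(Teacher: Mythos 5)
The paper does not give a proof of this theorem: it only cites it from \cite[Appendix]{KT:saturation} (that appendix, which establishes the hive model for Littlewood--Richardson coefficients, is due to Buch), so there is no in-paper argument to compare yours against.

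Your proposed route --- reduce to the Littlewood--Richardson rule by putting hives with the prescribed boundary in bijection with the chains $\lambda=\lambda^{(0)}\subseteq\dots\subseteq\lambda^{(n)}=\nu$ of horizontal strips whose filling is ballot --- is the approach of the cited reference, and the preliminary observations (the boundary data on the left of Figure~\ref{fig:hive} is inconsistent unless $|\nu|=|\lambda|+|\mu|$; the hive labels along a fixed lattice direction recover the partial sums of a chain; the endpoints and increments of the chain are forced to be $\lambda$, $\nu$ and $\mu_k$) are all correct. So the framework is right. What you have written, however, is a program and not a proof: the theorem is exactly the assertion that the rhombus inequalities \eqref{HCvertex} for the three types $R_1,R_2,R_3$ of Figure~\ref{fig:rombus} encode the interlacing and ballot conditions, and you explicitly defer this step as ``where the real work lies.'' Since this equivalence \emph{is} the content of the theorem, deferring it means no proof has been given.

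Moreover, the one substantive thing you do say about that step points in the wrong direction. You expect the ballot condition to be non-local on the hive side, ``materializing only after telescoping whole strips of rhombus inequalities across the triangle,'' because in chain form it is a concavity in the step index $k$ on partial sums of the $\lambda^{(k)}$. But the hive labels are not single partial sums of the $\lambda^{(k)}$ in one direction: they are the cumulative quantities in \emph{both} the row index and the step index for which the ballot inequality becomes a single second-difference inequality across four adjacent vertices --- i.e.\ exactly one rhombus. Under the correct dictionary, each of the three rhombus types corresponds to exactly one of the three local constraints on the LR chain (weak increase along rows, strict increase along columns, and ballot), with no aggregation. This locality of all three constraints is the entire point of the hive model, and it is what the cited proof establishes. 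If the ballot condition required telescoping in your coordinatization, the right diagnosis would be that the dictionary between hive labels and chain data had been set up incorrectly, not that the ballot condition is intrinsically harder than interlacing.
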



\section{The case of $\GL_3$}
\label{sec:SL3}

It is known that the function $(\Lambda_n)^3\longto
\NN,\,(\lambda,\mu,\nu)\longmapsto c_{\lambda\mu}^\nu$ is piecewise
polynomial (see~\cite{Rassart:polLR}) of degree $\frac {n^2-3n+2}2$.
For $n=3$, we get precisely the following. 

\begin{prop}
\label{prop:LR3}
  Let $\lambda=(\lambda_1,\lambda_2,0)$, $\mu=(\mu_1,\mu_2,0)$, and
  $\nu=(\nu_1,\nu_2,\nu_3)$ in $\Lambda_3$ be such that
  $|\nu|=|\lambda|+|\mu|$.  Then $c_{\lambda\mu}^\nu$ is the number of integer points in
the interval
\[
\llbracket\max(\mu_1-\lambda_2,\mu_2,\nu_1-\lambda_1,\mu_1-\nu_3,\nu_2-\lambda_{2},\mu_1+\mu_2-\nu_2),\min(\mu_1,\nu_1-\lambda_2,\mu_1+\mu_2-\nu_3)\rrbracket
. \]
\end{prop}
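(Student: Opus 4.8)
The plan is to specialize the hive model of the previous section to $n=3$ and count integer points directly. For $n=3$ a hive has $\frac{4\cdot5}{2}=10$ vertices, and after fixing the boundary labels prescribed by $\lambda=(\lambda_1,\lambda_2,0)$, $\mu=(\mu_1,\mu_2,0)$ and $\nu=(\nu_1,\nu_2,\nu_3)$ (with $|\nu|=|\lambda|+|\mu|$), there is exactly one interior vertex. Call its label $x$. So $c_{\lambda\mu}^\nu$ equals the number of integers $x$ for which all $3\cdot\frac{3\cdot2}{2}=9$ rhombus inequalities~\eqref{HCvertex} hold; this immediately exhibits $c_{\lambda\mu}^\nu$ as the number of integer points in some interval $\llbracket A,B\rrbracket$, where $A$ is the maximum of the lower bounds on $x$ and $B$ the minimum of the upper bounds. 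The whole content is to compute $A$ and $B$ and check they match the stated formulas.

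First I would fix an explicit coordinatization: label the three corners $0$, $|\lambda|$, $|\nu|=|\lambda|+|\mu|$; the left edge carries the partial sums $0,\lambda_1,\lambda_1+\lambda_2$; the bottom edge carries $0,\nu_1,\nu_1+\nu_2,|\nu|$; the right edge carries $|\lambda|,|\lambda|+\mu_1,|\lambda|+|\mu|$. Of the ten vertices, nine are boundary vertices with these fixed values, and the remaining one — the unique interior vertex, adjacent to the midpoints of all three sides — gets the free label $x$. Then I would write down the nine rhombus constraints. Several of them only involve boundary vertices and reduce to the known Horn/Weyl inequalities for $n=3$ (these are automatically satisfied under the standing hypothesis that $c_{\lambda\mu}^\nu\neq 0$, i.e. that $(\lambda,\mu,\nu)$ lies in the Horn cone, and in any case they do not constrain $x$). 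The remaining ones are linear in $x$: each gives either a lower bound $x\geq(\text{linear in }\lambda,\mu,\nu)$ or an upper bound $x\leq(\text{linear in }\lambda,\mu,\nu)$.

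I would then collect the lower bounds and the upper bounds. One expects to find, after substituting the partial-sum expressions for the boundary labels and simplifying, exactly the six expressions
$$
\mu_1-\lambda_2,\quad \mu_2,\quad \nu_1-\lambda_1,\quad \mu_1-\nu_3,\quad \nu_2-\lambda_2,\quad \mu_1+\mu_2-\nu_2
$$
as lower bounds for $x$ and the three expressions
$$
\mu_1,\quad \nu_1-\lambda_2,\quad \mu_1+\mu_2-\nu_3
$$
as upper bounds, so that $A=\max(\dots)$ and $B=\min(\dots)$ are exactly as in the statement; this is a routine but slightly tedious bookkeeping exercise with the nine rhombi. (Some rhombus inequalities may yield a bound that is redundant, i.e. dominated by another; I would note these but they cause no harm.) Finally, invoking the Knutson--Tao theorem quoted above, $c_{\lambda\mu}^\nu=\#\big(\llbracket A,B\rrbracket\big)$, which is the claim.

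The main obstacle is purely organizational rather than conceptual: one must choose the labelling of the interior vertex and of the nine boundary vertices carefully and consistently, correctly identify which of the three rhombus types ($R_1,R_2,R_3$) each of the nine rhombi is and which vertex plays the role of $a,b,c,d$ in Figure~\ref{fig:rombus}, and then not make a sign error when passing from vertex labels to the differences $\mu_1-\lambda_2$, etc. There is no inductive step, no combinatorial subtlety, and no use of saturation beyond citing the hive theorem; once the nine inequalities are written down correctly the result falls out. It would also be prudent to double-check the final formula against the explicit $\GL_3$ computations reported in the introduction (the $\lambda=(5,3)$, $\mu=(6,3)$ example) for a handful of $\nu$.
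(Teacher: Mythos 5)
Your proposal follows the paper's own proof exactly: the paper likewise dismisses this as a routine verification with the hive model, noting that there is a single interior entry and that the $9$ rhombus inequalities cut out an interval. The only small discrepancy is that the paper records a translation by $\lambda_1+\lambda_2$ between the actual solution interval for the interior vertex label and the interval quoted in the statement, while you assert that the rhombus inequalities bound $x$ by precisely the stated expressions; if $x$ denotes the raw vertex label this is not literally true, though it is harmless since a translation does not change the number of integer points.
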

 
This statement is well known and can easily be checked using
the hive model. Indeed, once $\lambda$, $\mu$ and $\nu$ are fixed, there is only one
interior entry $x$ to choose in order to determine the  hive. 
The solution set to the system of the 9 rhombus inequalities is an
interval from which the interval of the statement is obtained by
translation by $\lambda_1+\lambda_2$.\\

Proposition~\ref{prop:LR3}
implies that, for any nonnegative integer $c$, $c_{\lambda\mu}^\nu>c$
if and only if, for any linear form $\varphi$ and $\psi$ appearing
in the min and max respectively, we have $\varphi-\psi\geq c$.
Namely $c_{\lambda\mu}^\nu>c$ if and only if 

\begin{equation}
\begin{array}{l@{\qquad}l}
  \lambda_1-\lambda_2-c\geq 0&\lambda_2-c\geq 0\\
\mu_1-\mu_2-c\geq 0&\mu_2-c\geq 0\\
\nu_1-\nu_2-c\geq 0&\nu_2-\nu_3-c\geq 0\\
\lambda_1+\mu_1-\nu_1-c\geq 0&\lambda_1+\mu_1-\nu_2-\nu_3-c\geq 0\\
\lambda_1+\mu_2-\nu_2-c\geq 0&\lambda_1+\lambda_2+\mu_1-\nu_1-\nu_3-c\geq 0\\
\lambda_1-\nu_3-c\geq 0&\lambda_1+\lambda_2+\mu_2-\nu_2-\nu_3-c\geq 0\\
\lambda_2+\mu_1-\nu_2-c\geq 0&
	\lambda_1+\mu_1+\mu_2-\nu_1-\nu_3-c\geq 0\\
\mu_1-\nu_3-c\geq 0&
	\lambda_2+\mu_1+\mu_2-\nu_2-\nu_3-c\geq 0\\
\lambda_2+\mu_2-\nu_3-c\geq 0&\lambda_1+\lambda_2+\mu_1+\mu_2-\nu_1-\nu_2-c\geq 0
\end{array}\label{eq:18ineq}
\end{equation}


and
\begin{equation}
  \label{eq:1}
  |\nu|=|\lambda|+|\mu|.
\end{equation}

Note that, for $c=0$, we recover the 6 inequalities saying that
$\lambda$, $\mu$ and $\nu$ are dominant, the 6 Weyl inequalities and
the 6 others inequalities of the Horn cone (see {\it e.g.}
\cite{Fulton:survey}). 

We now want to compute the function mapping $(\lambda,\mu,c)$ to the
number of solutions of this system of inequalities in $\nu$. 
The method consists in restating this problem in the langage of vector partition
functions as in \cite{Sturm:vectpart}.

Start with the $18\times 8$ matrix $H$  whose rows give the coefficients of the 18 inequalities~\eqref{eq:18ineq}.
 Set 
$$
\Lambda=\{(\lambda_1,\lambda_2,\mu_1,\mu_2,\nu_1,\nu_2,\nu_3,c)\in \ZZ^8\,:\, |\nu|=|\lambda|+|\mu|\}
$$
and
$$
\Lambda^+=\{(\lambda_1,\lambda_2,\mu_1,\mu_2,\nu_1,\nu_2,\nu_3,c)\in
\Lambda\,:\, \lambda,\mu,\nu\mathrm{\ dominant\ and\ } c\geq 0\}.
$$
Let $\widetilde\Horn_3$ denote the set of points in $\Lambda^+$ that satisfy
the inequalities~\eqref{eq:18ineq}.  

To get nonnegative variables, we make the following change of
coordinates 
$$
\begin{array}{lll}
  a_1=\lambda_1-\lambda_2-c&b_1=\mu_1-\mu_2-c&c_1=\nu_1-\nu_2-c\\
a_2=\lambda_2-c&b_2=\mu_2-c&c_2=\nu_2-c
\end{array}
$$ 
Then $\widetilde\Horn_3$ identifies with $\widetilde \Horn_3'=\{X\in\NN^7\,|\,NX\geq 0\}$, where 

$$
N=
\left(\begin{array}{rrrrrrr}
-1 & -2 & -1 & -2 & 1 & 3 & -3 \\
1 & 1 & 1 & 1 & -1 & -1 & 1 \\
1 & 1 & 0 & 1 & 0 & -1 & 1 \\
0 & -1 & -1 & -2 & 1 & 2 & -2 \\
0 & 1 & 1 & 1 & 0 & -1 & 1 \\
-1 & -2 & 0 & -1 & 1 & 2 & -2 \\
-1 & -1 & -1 & -1 & 1 & 2 & -2 \\
0 & -1 & 0 & -1 & 1 & 1 & -1 \\
0 & 0 & 0 & -1 & 0 & 1 & -1 \\
0 & 0 & -1 & -1 & 1 & 1 & -1 \\
0 & -1 & 0 & 0 & 0 & 1 & -1 \\
-1 & -1 & 0 & 0 & 1 & 1 & -1 \\
1 & 2 & 1 & 2 & -1 & -2 & 2
\end{array}\right).
$$

Set $
\tilde N=(N | -I_{13})
$ in such a way that 
$$
\begin{array}{lll}
\widetilde \Horn_3'
&\simeq&\{(X,Y)\in\NN^7\times \NN^{13}\,|\,NX=Y\}\\
&\simeq&\{X\in \NN^{20}\,|\,\tilde NX=0\}.
\end{array}
$$
We now proceed to the affine section mentioned in the proof of Proposition~\ref{prop:nbisquasipol}.
Thus, up to our changes of variables, the function
$(\lambda,\mu,c)\mapsto\Nb_3(c^\bullet_{\lambda\mu}>c)$  is the   map
$$
\begin{array}{ccl}
  \NN^5 &\longto&\NN\\
Y&\longmapsto&\#\{X\in\NN^{15}\,:\,\tilde MX=-BY\},
\end{array}
$$
where $\tilde M=(M\,|\,-I_{13})$,
$M$ is the matrix formed by columns 5 and 6 of the
matrix $N$ and 
$B$  is the matrix formed by  the other columns of $N$.

Note that $\tilde M$ is not unimodular: the lowest common multiple of the maximal
minors is not $1$, but $6$. There are 83 such nonzero minors. 
Then \cite{Sturm:vectpart} implies that $(\lambda,\mu,c)\mapsto
\Nb(c_{\lambda\mu}^\bullet>c)$ 
is piecewise quasi-polynomial with chambers obtained by intersecting some 
83 explicit simplicial cones. 
We used \cite{iscc}, an implementation of Barvinok algorithm \cite{Barv}
to compute this function. The surprise was that we got only polynomial
functions and only 7 cones. 
Actually, the software gave 36 cones that can be glued to give those 7
described in Proposition~\ref{prop:calculSL3}.

\begin{prop}
\label{prop:calculSL3}
  We use the basis of fundamental weights to set
  $\lambda=k_1\varpi_1+k_2\varpi_2=(k_1+k_2,k_2)$ and $\mu=l_1\varpi_1+l_2\varpi_2=(l_1+l_2,l_2)$.
 Then $\Nb(c_{\lambda\mu}^\bullet>c)=0$ unless
$$
c\leq\min(k_1,k_2,l_1,l_2).
$$
Moreover, this cone decomposes into 7 cones $C_1,\dots,C_7$ on which $\Nb(c_{\lambda\mu}^\bullet>c)$ is given by
polynomial functions $P_1,\dots,P_7$. Five of these seven pairs
$(C_i,P_i)$ are kept unchanged by switching $k_1$ and $k_2$. The two others are swapped by this operation.

In particular, Conjecture~\ref{conj:setLR} holds for $\GL_3$. 
\end{prop}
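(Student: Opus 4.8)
The plan is to carry the vector-partition-function set-up above through to an explicit answer, and then read off both the symmetry and the consequence for Conjecture~\ref{conj:setLR} from that answer. First I would feed the matrix $\tilde M=(M\,|\,-I_{13})$, together with its parameter dependence $-BY$, into an implementation of Barvinok's algorithm (here \cite{iscc}), obtaining the function $Y\longmapsto\#\{X\in\NN^{15}\,:\,\tilde MX=-BY\}$ as a piecewise quasi-polynomial: a finite list of pairs (rational polyhedral chamber, quasi-polynomial of degree $2$) covering the relevant cone in $\NN^5$. A priori the non-unimodularity of $\tilde M$ (the l.c.m.\ of its maximal minors is $6$, not $1$) allows a period dividing $6$; the first thing to record is the --- mildly surprising --- fact that every quasi-polynomial returned is actually an honest polynomial, so no congruence occurs.

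Next I would clean up the chamber structure. The raw output is a decomposition into $36$ simplicial cones; I would check that the chambers carrying the same polynomial glue together into $7$ maximal cones $C_1,\dots,C_7$, and then undo the two linear changes of variables (first the $a_i,b_i,c_i$ substitution, then the passage to fundamental-weight coordinates) so that each $C_i$ and each $P_i$ is written directly in $(k_1,k_2,l_1,l_2,c)$. Determining where the function can be nonzero --- equivalently, where no inequality among~\eqref{eq:18ineq} is violated --- gives the vanishing statement $\Nb(c_{\lambda\mu}^\bullet>c)=0$ unless $c\le\min(k_1,k_2,l_1,l_2)$, and shows that the $C_i$ subdivide precisely this cone.

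Then comes the symmetry and the conclusion. For $\SL_3$ the Cartan involution sends $k_1\varpi_1+k_2\varpi_2$ to $k_2\varpi_1+k_1\varpi_2$, so the duality $\lambda\mapsto\lambda^*$ is exactly the swap $k_1\leftrightarrow k_2$ (fixing $l_1$, $l_2$ and $c$), while by~\eqref{eq:NbredSL} the passage from $\GL_3$ to $\SL_3$ leaves $\Nb_3$ unchanged. Hence it suffices to observe in the explicit table that the collection $\{(C_i,P_i)\}_{i=1}^{7}$ is stable under $k_1\leftrightarrow k_2$, with five of the pairs individually invariant and the remaining two exchanged; this gives $\Nb_3(c_{\lambda\mu}^\bullet>c)=\Nb_3(c_{\lambda^*\mu}^\bullet>c)$, which is~\eqref{eq:2}. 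Finally, since $\#\{\nu\in\Lambda_3\,:\,c_{\lambda\mu}^\nu=c\}=\Nb_3(c_{\lambda\mu}^\bullet>c-1)-\Nb_3(c_{\lambda\mu}^\bullet>c)$ for every $c\ge 1$, equality of the two $\Nb$-functions forces equality of the two multisets of multiplicities, i.e.\ Conjecture~\ref{conj:setLR} for $n=3$.

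The main obstacle is verificational rather than conceptual: one must trust Barvinok's output (ideally cross-checking it with Normaliz \cite{normaliz}, or recomputing it by hand from the single-interval description of Proposition~\ref{prop:LR3}), carry out without error the bookkeeping that reduces $36$ chambers to $7$ and rewrites everything in weight coordinates, and confirm that the degree-$2$ answers really are polynomial rather than quasi-polynomial. Once the explicit table of the seven pairs $(C_i,P_i)$ is in hand, the symmetry under $k_1\leftrightarrow k_2$ and the derivation of Conjecture~\ref{conj:setLR} are immediate.
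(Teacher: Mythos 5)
Your proposal is correct and follows essentially the same route as the paper: reduce to a vector partition function via the interval description of Proposition~\ref{prop:LR3} and the matrix $\tilde M$, compute with Barvinok's algorithm, observe that the non-unimodularity does not produce genuine congruences, glue the $36$ raw chambers into $7$, pass to fundamental-weight coordinates where $\lambda\mapsto\lambda^*$ is the swap $k_1\leftrightarrow k_2$, and read off the $5+2$ symmetry of the pairs $(C_i,P_i)$. The only thing you add explicitly that the paper leaves implicit is the telescoping identity $\#\{\nu:c_{\lambda\mu}^\nu=c\}=\Nb_3(c_{\lambda\mu}^\bullet>c-1)-\Nb_3(c_{\lambda\mu}^\bullet>c)$ connecting the $\Nb$-equality to Conjecture~\ref{conj:setLR}, which is a useful clarification.
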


In the basis of fundamental weights, we are interested in the function
\[ \begin{array}{rccl}
\psi: & \NN^5 & \longrightarrow & \NN\\
 & (k_1,k_2,l_1,l_2,c) & \longmapsto & \#\{\nu\in\Lambda_3\,|\,c_{k_1\varpi_1+k_2\varpi_2,l_1\varpi_1+l_2\varpi_2}^\nu >c\}
\end{array}. \]
Notice moreover that switching $k_1$ and $k_2$ corresponds then to
taking $\lambda^*$. 
Define now the following seven polynomials in $k_1,k_2,l_1,l_2,c$:
\[ P_1=2 \, c^{2} - c {\left(k_{1} + k_{2} + l_{1} + l_{2} + 2\right)}
  - \frac{1}{2} (k_1+k_2-l_1-l_2)^2+k_1k_2+l_1l_2+\frac 1
  2(k_1+k_2+l_1+l_2)
+1\]
\[ P_2=3c^{2} - 3c {\left(k_{1} + k_{2} + 1\right)} + 
\frac{1}{2}(k_{1}+k_2)^{2} +  k_{1}k_{2}+\frac 3 2(k_1+k_2)
+1 \]
\[ P_3=3c^{2} - 3c {\left(l_{1} + l_{2} + 1\right)} + 
\frac{1}{2}(l_{1}+l_2)^{2} +  l_{1}l_{2}+\frac 3 2(l_1+l_2)
+1 \]
\[ P_4=\frac{5}{2} \, c^{2} - c {\left(2 \, k_{1} + 2
      \, k_{2} +  l_{1} + \frac 5 2\right)} + k_{1} k_{2} +(k_{1} + k_{2})(l_{1}+1) - \frac{l_1}{2}( l_{1}-1) + 1 \]
\[ P_5=\frac{5}{2} \, c^{2} - c {\left(2 \, k_{1} + 2
      \, k_{2} +  l_2+ \frac 5 2\right)} + k_{1} k_{2} +(k_{1} +
  k_{2})(l_2+1) - \frac{l_2}{2}( l_2-1) + 1 \]

\[ P_6=\frac{5}{2} \, c^{2} - c {\left(k_{1} + 2 l_{1} + 2 l_{2}
      +\frac 5 2\right)} +l_1l_2+(l_1+l_2)(k_1+1)-\frac{k_1}{2}( k_{1}-1) + 1\]
\[ P_7=\frac{5}{2} \, c^{2} - c {\left(k_2 + 2 l_{1} + 2 l_{2}
      +\frac 5 2\right)} +l_1l_2+(l_1+l_2)(k_2+1)-\frac{k_2}{2}(
  k_2-1) + 1\]

Notice  that $P_1,\dots,P_5$ are symmetric in $k_1,k_2$,
whereas $P_6$ and $P_7$ are swapped when one swaps $k_1$ and
$k_2$. One might also add that $P_3$ is the image of $P_2$ under the
involution corresponding to swapping  $\lambda$ and
$\mu$ -- i.e. swapping $(k_1,k_2)$ and $(l_1,l_2)$ --, as $P_6$ is
the image of $P_4$ and $P_7$ the one of $P_5$ under this same
involution. 

Then, for $c\geq 0$, $k_1\geq c$, $k_2\geq c$, $l_1\geq c$, and $l_2\geq c$, the function $\psi$ is given by the following piecewise polynomial:
\[ \begin{array}{l|l}
\text{Cones of polynomiality} & \text{Polynomial giving }\psi\\
\hline
C_1:k_1+k_2\geq \max(l_1,l_2)+c,\quad l_1+l_2\geq \max(k_1,k_2)+c & P_1\\
C_2:k_1+k_2\leq \min(l_1,l_2)+c& P_2\\
C_3:l_1+l_2\leq \min(k_1,k_2)+c& P_3\\
C_4:l_1+c\leq k_1+k_2\leq l_2+c& P_4\\
C_5: l_2+c\leq k_1+k_2\leq l_1+c& P_5\\
C_6:k_1+c\leq l_1+l_2\leq k_2+c & P_6\\
C_7: k_2+c\leq l_1+l_2\leq k_1+c  & P_7
\end{array} \]
One can then see that the cones $C_1$ to $C_5$ are stable under permutation of $k_1$ and $k_2$ whereas the cones $C_6$ and $C_7$ are swapped when $k_1$ and $k_2$ are. Thus, for all $k_1,k_2,l_1,l_2,c\geq 0$,
\[ \psi(k_1,k_2,l_1,l_2,c)=\psi(k_2,k_1,l_1,l_2,c), \]
which proves Proposition~\ref{prop:calculSL3}.\\

\begin{rem}
The last part of  Proposition~\ref{prop:calculSL3} asserts that
there exists a bijection
$(\Lambda_3^0)^2\times\Lambda_3\longto (\Lambda_3^0)^2\times\Lambda_3,\,(\lambda,\mu,\nu)\longmapsto (\lambda^*,\mu,\tilde\nu)$ such that
$$
c_{\lambda\mu}^\nu=c_{\lambda^*\mu}^{\tilde \nu}.
$$
One could hope for such a bijection to be linear. Unfortunately, it CANNOT.\\

One can check this claim as follows. The matrix of such a linear bijection $\varphi:(\Lambda_3^0)^2\times\Lambda_3\longto (\Lambda_3^0)^2\times\Lambda_3,\,(\lambda,\mu,\nu)\longmapsto (\lambda^*,\mu,\tilde\nu)$ would only depend on 7 vectors $\nu^1,\dots,\nu^7$ in $\ZZ^3$. Assume that this matrix does exist.

For example, the image of $(\lambda_1,\lambda_2,\mu_1,\mu_2,\nu_1,\nu_2,\nu_3)=(1,0,0,0,1,0,0)$ is then $(1,1,0,0,\nu^1+\nu^5)$. Moreover this image has to correspond to some nonzero Littlewood-Richardson coefficient. Hence it is necessary that $\nu_1+\nu_5=(1,1,0)$.

Similarly, considering the image of $(0,0,1,1,1,1,0)$, one gets that it must be $(0,0,1,1,\nu^3+\nu^4+\nu^5+\nu^6)=(0,0,1,1,1,1,0)$, and then $\nu^3+\nu^4+\nu^5+\nu^6=(1,1,0)$.

Now the image of $(1,0,1,1,1,1,1)$ has to be a ray of the Horn cone. We deduce that this image is $(1,1,1,1,2,2,0)$, and thus $\nu^1+\nu^3+\nu^4+\nu^5+\nu^6+\nu^7=(2,2,0)$.

Combining these three constraints one gets $\nu^5=\nu^7$, which contradicts the invertibility of $\varphi$.\\

Note also that the linear automorphisms of $(\Lambda_3)^3$ preserving
the Littlewood-Richardson coefficients are proved to form a group of cardinality
288 (so big !) in \cite{bri144}.
\end{rem}

\section{A stability result}

In this section, we are interested in the Littlewood-Richardson
coefficients $c_{\lambda\mu}^\nu$ with $\lambda$ and $\mu$
near-rectangular. Using the hive model, we give a proof of
Proposition~\ref{prop:stab} stated in the introduction.

\begin{proof}[Proof of Proposition~\ref{prop:stab}]
We know that the Littlewood-Richardson coefficient $c_{\lambda\mu}^\nu$ is equal to the number of hives with exterior edges labelled by $\lambda$, $\mu$, and $\nu$. Let us assume that $c_{\lambda\mu}^\nu>0$, meaning that such a hive exists, and consider any hive like this. In this hive, a certain number of edges have a label that is already fixed by these ``boundary conditions''. These edges and values are shown in the following picture (made for the sake of example with a hive of size 6, but the picture is strictly the same for all sizes at least 4):
\def\nhive{6}
\begin{center}
\begin{tikzpicture}[scale=1.7]
\foreach \x in {0,1,...,\nhive} {
\draw (\x,0) -- ++(60:\nhive-\x);
\draw (\x,0) -- ++(120:\x);
\draw (60:\x) -- ++(\nhive-\x,0);
}
\foreach \x in {3,4,...,\nhive} {
\draw[thick,red] (60:1) ++(\x-3,0) -- ++(60:\nhive+1-\x);
\draw[thick,blue] (60:1) ++(\nhive+2-\x,0) -- ++(120:\nhive+1-\x);
}
\foreach \x in {4,5,...,\nhive} {
\draw[thick,green]  (60:1+\x-4) ++(1,0) -- ++(\nhive+1-\x,0);
}
\draw[thick,green]  (2,0) -- ++(\nhive-4,0);
\node at (0.5,-0.2) {$\nu_1$};
\node at (1.5,-0.2) {$\nu_2$};
\node at (2.5,-0.2) {$\nu_3$};
\node at (3.5,-0.2) {$\nu_{n-2}$};
\node at (\nhive-1.5,-0.2) {$\nu_{n-1}$};
\node at (\nhive-0.5,-0.2) {$\nu_n$};
\path (60:0.6) ++(-0.3,0)
  node {$\lambda_1$};
\path (60:\nhive-0.4) ++(-0.2,0)
  node {$0$};
\foreach \x in {3,4,...,\nhive} {
\path (60:\x-1.4) ++(-0.2,0)
  node {$\lambda_2$};
}
\path (\nhive,0) ++(120:0.6) ++(0.2,0)
  node {$0$};
\path (\nhive,0) ++(120:\nhive-0.4) ++(0.3,0)
  node {$\mu_1$};
\foreach \x in {3,4,...,\nhive} {
\path (\nhive,0) ++(120:\x-1.4) ++(0.2,0)
  node {$\mu_2$};
}
\path (1,0) ++(120:0.5)
  node {$\scriptstyle \nu_1-\lambda_1$};
\path (\nhive-1,0) ++(60:0.6) ++(-0.1,0)
  node {$\scriptstyle\nu_n$};
\path (60:\nhive-1) ++(0.5,0.1)
  node {$\scriptstyle \mu_1$};
\end{tikzpicture} 
\end{center}

Then, using a number of hive conditions (including the fact that, in any triangle, the values on two of the edges determine the value on the third one), we obtain the following values for some interior edges: all the edges parallel to the north-west side of the triangle, with no endpoint on the other two sides, correspond to the value $\lambda_2$; the ones parallel to the north-east side, with no endpoint on the other two sides, have the value $\mu_2$; finally the edges parallel to the south side, strictly inside the triangle and with no endpoint on the other two sides, must have the value $\lambda_2+\mu_2$ \footnote{These three kinds of edges are coloured on both pictures in this proof: the first kind (parallel to the north-west) in red, the second kind (north-east) in blue, and the third one (south) in green.}.

Thus the edges on the south side corresponding to the parts $\nu_3$ to $\nu_{n-2}$ must in fact have the value $\lambda_2+\mu_2$. That is, $\nu$ must be of the aforementioned form: $\nu=\nu_1\nu_2(\lambda_2+\mu_2)^{n-4}\nu_{n-1}\nu_n$. Notice moreover that, even if $n=4$, two rhombi inequalities show immediately that one must still have $\nu_2\geq\lambda_2+\mu_2\geq\nu_{n-1}$.\\

From now on we assume that $\nu$ has this particular form. Consider once again a hive with exterior edges labelled by $\lambda$, $\mu$, and $\nu$. The same conditions as before apply, and the values of all the remaining edges have then to be chosen in order to determine completely the hive. Let us name these values $a_0,a_1,\dots,a_7$ as shown on the following picture:
\begin{center}
\begin{tikzpicture}[scale=1.7]
\foreach \x in {0,1,...,\nhive} {
\draw (\x,0) -- ++(60:\nhive-\x);
\draw (\x,0) -- ++(120:\x);
\draw (60:\x) -- ++(\nhive-\x,0);
}
\foreach \x in {3,4,...,\nhive} {
\draw[thick,red] (60:1) ++(\x-3,0) -- ++(60:\nhive+1-\x);
\draw[thick,blue] (60:1) ++(\nhive+2-\x,0) -- ++(120:\nhive+1-\x);
}
\foreach \x in {4,5,...,\nhive} {
\draw[thick,green]  (60:1+\x-4) ++(1,0) -- ++(\nhive+1-\x,0);
}
\draw[thick,green]  (2,0) -- ++(\nhive-4,0);
\foreach \x in {3,4,...,\nhive} {
\path (60:\x-1.4) ++(\nhive-\x+0.9,0)
  node {$a_0$};
}
\foreach \x in {3,4,...,\nhive} {
\path (60:\x-0.9) ++(-60:0.5)
  node {$a_1$};
}
\foreach \x in {3,4,...,\nhive} {
\path (60:\x-2) ++(0.5,0.1)
  node {$a_2$};
}
\foreach \x in {3,4,...,\nhive} {
\path (60:\x-2) ++(\nhive-\x+1.5,0.1)
  node {$a_3$};
}
\path (\nhive-0.9,0) ++(120:0.6)
  node {$a_4$};
\path (0.9,0) ++(60:0.6)
  node {$a_5$};
\foreach \x in {4,5,...,\nhive} {
\path (60:1.1) ++(\x-3,0) ++(-60:0.5)
  node {$a_6$};
}
\foreach \x in {4,5,...,\nhive} {
\path (60:0.6) ++(\x-2.1,0)
  node {$a_7$};
}
\node at (0.5,-0.2) {$\nu_1$};
\node at (1.5,-0.2) {$\nu_2$};
\node at (\nhive-1.5,-0.2) {$\nu_{n-1}$};
\node at (\nhive-0.5,-0.2) {$\nu_n$};
\foreach \x in {5,6,...,\nhive} {
\node at (\x-2.5,-0.2) {$\lambda_2+\mu_2$};
}
\path (60:0.6) ++(-0.3,0)
  node {$\lambda_1$};
\path (60:\nhive-0.4) ++(-0.2,0)
  node {$0$};
\foreach \x in {3,4,...,\nhive} {
\path (60:\x-1.4) ++(-0.2,0)
  node {$\lambda_2$};
}
\path (\nhive,0) ++(120:0.6) ++(0.2,0)
  node {$0$};
\path (\nhive,0) ++(120:\nhive-0.4) ++(0.3,0)
  node {$\mu_1$};
\foreach \x in {3,4,...,\nhive} {
\path (\nhive,0) ++(120:\x-1.4) ++(0.2,0)
  node {$\mu_2$};
}
\path (1,0) ++(120:0.5)
  node {$\scriptstyle \nu_1-\lambda_1$};
\path (\nhive-1,0) ++(60:0.6) ++(-0.1,0)
  node {$\scriptstyle \nu_n$};
\path (60:\nhive-1) ++(0.5,0.1)
  node {$\scriptstyle \mu_1$};
\end{tikzpicture} 
\end{center}

The fact that many of these edges must be given the same value comes everytime from the fact that, in any rhombus inside a hive, if two opposite edges have the same value, then it must also be the case for the other pair of opposite edges.\\

Using now once again the fact that, in any triangle, the values corresponding to two edges determine the value on the third one, these 8 integers $a_0,\dots,a_7$ are  related by the following equations:
\[ \left\lbrace\begin{array}{l}
a_0+a_1=\mu_1\\
a_0+\mu_2=a_3\\
\lambda_2+a_1=a_2\\
\nu_1-\lambda_1+a_5=a_2\\
a_4+\nu_n=a_3\\
a_4+a_7=\nu_{n-1}\\
a_5+a_6=\nu_2\\
a_6+a_7=\lambda_2+\mu_2
\end{array}\right.\qquad\Longleftrightarrow\qquad\left\lbrace\begin{array}{l}
a_1=\mu_1-a_0\\
a_2=\lambda_2+\mu_1-a_0\\
a_3=\mu_2+a_0\\
a_4=\mu_2-\nu_n+a_0\\
a_5=\lambda_1+\lambda_2+\mu_1-\nu_1-a_0\\
a_6=\lambda_2+2\mu_2-\nu_{n-1}-\nu_n+a_0\\
a_7=-\mu_2+\nu_{n-1}+\nu_n-a_0
\end{array}\right. \]
(let us recall that the equality $|\lambda|+|\mu|=|\nu|$ means that $\lambda_1+2\lambda_2+\mu_1+2\mu_2=\nu_1+\nu_2+\nu_{n-1}+\nu_n$). In particular, this means that the hive is for instance entirely determined by the value of $a_0$. We can now look at all the hive inequalities that must be satisfied by these $a_i$'s:
\[ \begin{array}{rcl}
a_0 & \geq & 0\\
\lambda_2 & \geq & a_0\\
a_1 & \geq & \mu_2
\end{array}\qquad\qquad\begin{array}{rcl}
\nu_1 & \geq & a_2\\
a_1 & \geq & \nu_1-\lambda_1\\
a_2 & \geq & \nu_2\\
\mu_2 & \geq & a_6
\end{array}\qquad\qquad\begin{array}{rcl}
a_3 & \geq & \nu_n\\
\nu_n & \geq & a_0\\
\nu_{n-1} & \geq & a_3
\end{array} \]
The inequalities of the first column can be obtained from the hive
inequalities in the north corner of the hive, those of the second from
the south-west corner, and those of the third from the south-east
corner (keep in mind that some of them can of course be obtained in
several ways). Thanks to the previous relations between the $a_i$'s,
all these inequalities can be expressed in terms of $a_0$ only, giving
in the end exactly the following necessary and sufficient conditions
on $a_0$ to obtain a hive:
\[ \left\lbrace
\begin{array}{l}
a_0\geq \max(0, \lambda_2+\mu_1-\nu_1,\nu_n-\mu_2)\\
a_0\leq \min(\lambda_2, \mu_1-\mu_2, \lambda_1+\mu_1-\nu_1, \lambda_2+\mu_1-\nu_2, -\lambda_2-\mu_2+\nu_{n-1}+\nu_n,\nu_n, \nu_{n-1}-\mu_2)
\end{array}\right. \]
As a consequence, $c_{\lambda\mu}^\nu$ is the number of
integer points in the interval $[M;m]$ where
\[
  \begin{array}{l}
M=\max(0,\lambda_2+\mu_1-\nu_1,\nu_n-\mu_2),\quad{\rm and}\\
m=\min(\lambda_2,\mu_1-\mu_2,\lambda_1+\mu_1-\nu_1,\lambda_2+\mu_1-\nu_2,-\lambda_2-\mu_2+\nu_{n-1}+\nu_n,\nu_n,\nu_{n-1}-\mu_2).
  \end{array}
\]
Observe finally that $\nu_{n-1}\leq \lambda_2+\mu_2$ and $\nu_2\geq \lambda_2+\mu_2$ give
\[ \nu_{n-1}-\mu_2\leq \lambda_2,\qquad \lambda_2+\mu_1-\nu_2\leq \mu_1-\mu_2,\qquad -\lambda_2-\mu_2+\nu_{n-1}+\nu_n\leq \nu_n. \]
Hence $c_{\lambda\mu}^\nu$ is the cardinality of 
\[ \llbracket\max(0,\lambda_2+\mu_1-\nu_1,-\mu_2+\nu_n),\min(\lambda_1+\mu_1-\nu_1,\lambda_2+\mu_1-\nu_2,-\lambda_2-\mu_2+\nu_{n-1}+\nu_n,-\mu_2+\nu_{n-1})\rrbracket. \]
\end{proof}

This stability can be interpreted as a proof of the existence of a bijection between 
sets of hives. Such a bijection can for instance be obtained as follows.

\begin{center}
\def\nhive{5}
\begin{tikzpicture}[scale=1.3]
\draw[fill=green!40] (0,0) -- (2,0) -- ++(60:1) -- ++(120:1) --
++(-1,0) -- cycle;
\draw[fill=red!40] (3,0) -- (5,0) -- ++(120:2) -- cycle;
\draw[fill=blue!40] (60:3) -- ++(2,0) -- (60:5) -- cycle;

\foreach \x in {0,1,...,\nhive} {
\draw (\x,0) -- ++(60:\nhive-\x);
\draw (\x,0) -- ++(120:\x);
\draw (60:\x) -- ++(\nhive-\x,0);
}  
\foreach \x in {0,1,...,2} {
\draw[thick,red]  (60:1) ++(\x,0) -- ++(60:\nhive-2-\x);
\draw[thick,blue]  (60:1) ++(4-\x,0) -- ++(120:\nhive-2-\x);
}  
\foreach \x in {0,1} {
\draw[thick,green]  (2,0) ++(120:\x) -- ++(\x+1,0);
}
\draw[thick,green]  (60:2) ++(1,0) -- ++(1,0);
\end{tikzpicture}
\hfill 
\def\nhive{4}
\begin{tikzpicture}[scale=1.3]
\draw[fill=green!40] (0,0) -- (2,0) -- ++(60:1) -- ++(120:1) --
++(-1,0) -- cycle;
\draw[fill=red!40] (2,0) -- (4,0) -- ++(120:2) -- cycle;
\draw[fill=blue!40] (60:2) -- ++(2,0) -- (60:4) -- cycle;
\foreach \x in {0,1,...,\nhive} {
\draw (\x,0) -- ++(60:\nhive-\x);
\draw (\x,0) -- ++(120:\x);
\draw (60:\x) -- ++(\nhive-\x,0);
}  
\end{tikzpicture} 
\end{center}

Starting from a hive of size $n$ ($n\geq 4$), consider the three areas
coloured in the picture above (on the left): the four triangles in the
north corner, the four in the south-east one, and the seven in the
south-west one. Then send this hive to the one of size $4$ obtained
by keeping these three coloured-areas (picture on the right). The
rhombus inequalities in the hive show that the values on the edges in
these three particular areas determine indeed completely the
hive. This means that this map is well defined and that it is truly a bijection.

\bigskip
\begin{rem}
Let $\alpha,\beta,\gamma$ be three partitions such that
$c_{\alpha,\beta}^\gamma=1$. By the Fulton's conjecture (see
\cite{KT:saturation} or \cite{belkale:geomHorn,BKR,fulton}),
we have $c_{k\alpha,k\beta}^{k\gamma}=1$, for any $k\geq 0$.
Let $(\tilde\alpha,\tilde\beta,\tilde\gamma)$ be a second triple of
partitions.
The stability result of \cite{SamSnowden} (see also \cite{Par:stab,Pel:stab}) asserts that
$c_{\tilde\alpha +k\alpha,
  \tilde\beta+k\beta}^{\tilde\gamma+k\gamma}$ does not depend on the
integer $k$
big enough.

Returning to the setting of Proposition~\ref{prop:stab}, consider
$\alpha=1^{\lambda_2}$, $\beta=1^{\mu_2}$ and
$\gamma=1^{\lambda_2+\mu_2}$. Set also
$\tilde\alpha=(\lambda_1\lambda_2\lambda_2)'$,
$\tilde\beta=(\mu_1\mu_2)'$ and $\tilde
\gamma=(\nu_1\nu_2\nu_3\nu_4)'$, where $\square'$ denote the conjugated
partition.  Using the invariance of the Littlewood-Richardson
coefficient by simultaneous conjugation we get
$c_{\lambda\mu}^\nu=c_{\tilde\alpha +k\alpha,
  \tilde\beta+k\beta}^{\tilde\gamma+k\gamma}$ with $k=n-2$.
Thus, the mentioned stability
result asserts that
$c_{\lambda\mu}^\nu$ does not depend on $n$ big anough.
 Proposition~\ref{prop:stab} asserts that this sequence is in fact constant for $n\geq 2$.
\end{rem}

\section{The case of $\GL_4(\CC)$}

This section is about  $\GL_4(\CC)$. But
Proposition~\ref{prop:stab} allows to extend several results to any
$\GL_n(\CC)$ for $n\geq 4$.

\subsection{The Horn cone}

The set of points in $\Horn_n$ with $\lambda$ and/or $\mu$
near-rectangular is the set of integer points on a face of this cone.
Proposition~\ref{prop:stab} implies that the geometry of this
face and the Littlewood-Richardson coefficients on it do not depend on $n\geq 4$.    
We denote by ${\overline\Horn}_n$ the set of points in $\Horn_n$ with
the first two partitions  $\lambda$ and $\mu$ in $\Lambda_n^0$. Then 
$\Horn_n\simeq \ZZ^2\times {\overline\Horn}_n$.
Set
$$
\Horn_4^{\nr^2}=\{(\lambda,\mu,\nu)\in\Horn_4\,:\,\lambda\mbox{ and
  $\mu$ are near-rectangular}\}
$$ 
and
$$
\Horn_4^{\nr}=\{(\lambda,\mu,\nu)\in\Horn_4\,:\,\lambda
 \mbox{  is near-rectangular}\}.
$$ 
The inequalities defining the Horn cone  $\Horn_n^\QQ$ are
well known (see Section~\ref{sec:Horncone}). 
By convex geometry and explicit computation, one can
deduce the minimal lists of inequalities for $\Horn_4^{\nr^2}$ and
$\Horn_4^{\nr}$. Softwares like Normaliz \cite{normaliz} allow to make the
computation. 
 
\begin{prop}
\label{prop:Horn4nr2}
Let $\lambda, \mu$ in $\Lambda_4^0$ and $\nu$ in $\Lambda_4$ such that $\lambda$ and
$\mu$ are near-rectangular. Then $c_{\lambda\mu}^\nu\neq 0$ if and only if
  $$
  \begin{array}{c@{\ \qquad\ }c}
   \multicolumn{2}{c}{|\lambda|+|\mu|=|\nu|}\\
 \nu_1\geq \nu_2&\nu_4\geq 0\\
\multicolumn{2}{c}{\nu_3+\nu_4\geq\lambda_2+\mu_2}\\
\nu_1+\nu_3\geq \lambda_1+\lambda_2+\mu_2&\nu_1+\nu_3\geq \lambda_2+\mu_1+\mu_2\\
\multicolumn{2}{c}{\nu_2\geq \lambda_2+\mu_2\geq \nu_3}\\
\nu_3\geq \lambda_2&\nu_3\geq \mu_2\\
\lambda_1+\mu_2\geq \nu_2&\lambda_2+\mu_1\geq \nu_2  
  \end{array}
$$
\end{prop}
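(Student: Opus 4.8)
The plan is to derive everything from Proposition~\ref{prop:stab} applied with $n=4$. In that case the forced middle block $(\lambda_2+\mu_2)^{n-4}$ is empty, so the proposition says that $c_{\lambda\mu}^\nu=0$ unless $|\nu|=|\lambda|+|\mu|$ and $\nu_2\geq\lambda_2+\mu_2\geq\nu_3$, and that, when these hold,
\[
 c_{\lambda\mu}^\nu=\#\,\llbracket M,m\rrbracket,\qquad M=\max(0,\ \lambda_2+\mu_1-\nu_1,\ \nu_4-\mu_2),
\]
\[
 m=\min(\lambda_1+\mu_1-\nu_1,\ \lambda_2+\mu_1-\nu_2,\ \nu_3+\nu_4-\lambda_2-\mu_2,\ \nu_3-\mu_2).
\]
Hence, under the standing hypotheses ($\lambda,\mu$ near-rectangular in $\Lambda_4^0$ and $\nu\in\Lambda_4$), one has $c_{\lambda\mu}^\nu\neq0$ if and only if $|\nu|=|\lambda|+|\mu|$, $\nu_2\geq\lambda_2+\mu_2\geq\nu_3$, and $M\leq m$; and $M\leq m$ is exactly the conjunction of the $3\times4=12$ affine inequalities \og a term of $M$ is $\leq$ a term of $m$\fg.

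First I would write out those twelve inequalities and reduce them modulo the relation $|\nu|=|\lambda|+|\mu|$ (i.e. $\lambda_1+2\lambda_2+\mu_1+2\mu_2=\nu_1+\nu_2+\nu_3+\nu_4$), which lets one replace, for instance, $\nu_1+\nu_3+\nu_4$ by $|\lambda|+|\mu|-\nu_2$ and $\nu_1+\nu_4$ by $|\lambda|+|\mu|-\nu_2-\nu_3$. After this rewriting each of the twelve is either one of the inequalities appearing in the statement --- concretely one recovers $\nu_1\geq\nu_2$, $\nu_3+\nu_4\geq\lambda_2+\mu_2$, $\nu_3\geq\mu_2$, $\nu_3\geq\lambda_2$, $\lambda_2+\mu_1\geq\nu_2$, $\lambda_1+\mu_2\geq\nu_2$, $\nu_1+\nu_3\geq\lambda_2+\mu_1+\mu_2$, $\nu_1+\nu_3\geq\lambda_1+\lambda_2+\mu_2$ --- or else is visibly implied by the others together with the standing hypotheses: the inequalities $0\leq\lambda_1+\mu_1-\nu_1$ and $\nu_4-\mu_2\leq\lambda_1+\mu_1-\nu_1$ follow from $\nu_2\geq\lambda_2+\mu_2$, $\nu_3\geq\lambda_2$ and $\nu_3+\nu_4\geq\lambda_2+\mu_2$; the inequality $\lambda_2+\mu_1-\nu_1\leq\lambda_1+\mu_1-\nu_1$ is $\lambda_1\geq\lambda_2$, which is itself $\leq$-implied by $\lambda_1+\mu_2\geq\nu_2\geq\lambda_2+\mu_2$; and $\nu_4-\mu_2\leq\nu_3-\mu_2$ is $\nu_3\geq\nu_4$, part of $\nu\in\Lambda_4$. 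Adjoining the equality $|\nu|=|\lambda|+|\mu|$, the two inequalities $\nu_2\geq\lambda_2+\mu_2\geq\nu_3$, and $\nu_4\geq0$ (itself a consequence of $\nu_3+\nu_4\geq\lambda_2+\mu_2\geq\nu_3$, and kept only to display the \og shape\fg of the cone), one obtains precisely the list in the statement. Conversely this list returns $\lambda_1\geq\lambda_2$, $\lambda_2\geq0$, $\mu_1\geq\mu_2$, $\mu_2\geq0$ and $\nu_2\geq\nu_3$, so together with the hypothesis $\nu\in\Lambda_4$ it is equivalent to $c_{\lambda\mu}^\nu\neq0$.

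It then remains to check \emph{minimality}, i.e. that no inequality of the list can be dropped. I would do this in the usual way, by exhibiting for each inequality a rational point of the cone $\Horn_4^{\nr^2}$ where that single inequality degenerates to an equality while all the others are strict; equivalently, by computing the extreme rays of $\Horn_4^{\nr^2}$ and reading off the ray/facet incidences. This is the \og convex geometry and explicit computation\fg mentioned just before the proposition, carried out with Normaliz~\cite{normaliz}; the analogous run handles $\Horn_4^{\nr}$.

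There is no deep obstacle: once Proposition~\ref{prop:stab} is available the content is linear algebra. The only point needing care is the bookkeeping of the redundancies in the second paragraph --- one must substitute using $|\nu|=|\lambda|+|\mu|$ consistently, so as not to discard a genuinely new inequality --- together with the minimality verification, where one must check on the extreme rays that the surviving inequalities are pairwise non-proportional and that none is a nonnegative combination of the others modulo the defining equality; this last check is safest to leave to the machine.
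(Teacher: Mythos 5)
Your argument is correct, and it takes a genuinely different route from the paper's. The paper obtains Proposition~\ref{prop:Horn4nr2} by starting from the known minimal H-description of the full Horn cone $\Horn_4^{\QQ}$, intersecting with the linear subspace $\lambda_2=\lambda_3$, $\lambda_4=0$, $\mu_2=\mu_3$, $\mu_4=0$, and having Normaliz recompute a minimal H-description of the resulting face. You instead bootstrap from Proposition~\ref{prop:stab}, which is internal to the paper: for $n=4$ the middle block is empty, the positivity condition is exactly $M\leq m$ together with $\nu_2\geq\lambda_2+\mu_2\geq\nu_3$ and $|\nu|=|\lambda|+|\mu|$, and one then reduces the $3\times 4$ pairwise comparisons modulo the weight equality. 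Your reductions check out: comparisons (1), (5), and (9) (in the order $0$/$\lambda_2+\mu_1-\nu_1$/$\nu_4-\mu_2$ against the four terms of $m$) are implied by $\nu_2\geq\lambda_2+\mu_2$, $\nu_3\geq\lambda_2$, $\nu_3+\nu_4\geq\lambda_2+\mu_2$, and the comparison $\nu_4-\mu_2\leq\nu_3-\mu_2$ is just $\nu_3\geq\nu_4$ from $\nu\in\Lambda_4$; the remaining eight, after substituting $\nu_1+\nu_3+\nu_4=|\lambda|+|\mu|-\nu_2$ and $\nu_1+\nu_4=|\lambda|+|\mu|-\nu_2-\nu_3$ where needed, coincide with the eight non-obvious inequalities listed. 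What the paper's route buys is uniformity: the same Normaliz computation handles $\Horn_4^{\nr}$ (Proposition~\ref{prop:horn4nr}), where only $\lambda$ is near-rectangular and Proposition~\ref{prop:stab} does not apply. What your route buys is self-containedness and transparency: the inequality list is seen to come directly from the explicit one-parameter formula already proved, and Normaliz is only invoked for the (optional, and strictly convex-geometric) claim that the list is irredundant. Either is fine; you might simply note that your derivation does not extend to Proposition~\ref{prop:horn4nr}, for which the paper's Horn-cone approach is needed.
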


\begin{rem}
  Proposition~\ref{prop:stab} also implies that $\nu_1+\nu_4\geq
  \lambda_2+\mu_1$, which is a consequence of these 11 inequalities.  
\end{rem}

\begin{prop}
  The cone generated by $\Horn_4^{\nr^2} \cap {\overline\Horn}_4$ has
  8 extremal rays generated by the triples $(\lambda,\mu,\nu)$
  associated to the following inclusions
  \begin{enumerate}
  \item $V(1)\subset V(1)\otimes V(0)$ (twice by permuting the 
    factors);
\item $V(1^3)\subset V(1^3)\otimes V(0)$ (twice by permuting the
    factors);
\item $V(1^2)\subset V(1)\otimes V(1)$;
\item $V(1^4)\subset V(1)\otimes V(1^3)$ (twice by permuting the 
    factors);
\item $V(2^21^2)\subset V(1^3)\otimes V(1^3)$.
  \end{enumerate}
Each triple $(\lambda,\mu,\nu)$ on one of these  extremal rays indexes a Littlewood-Richardson coefficient with value 1. 
The Hilbert basis of $\Horn_4^{\nr^2}\cap {\overline\Horn}_4$ consists in these 8 generators.
\end{prop}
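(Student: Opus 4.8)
The plan is to turn the statement into an explicit finite polyhedral computation and then identify the combinatorial meaning of the output.

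\textbf{Reduction to a cone.} I would parametrise a near-rectangular $\lambda\in\Lambda_4^0$ by $(\lambda_1,\lambda_2)$, a near-rectangular $\mu\in\Lambda_4^0$ by $(\mu_1,\mu_2)$, and a general $\nu\in\Lambda_4$ by $(\nu_1,\dots,\nu_4)$, and let $L\subset\ZZ^8$ be the rank-$7$ lattice defined by $\lambda_1+2\lambda_2+\mu_1+2\mu_2=\nu_1+\nu_2+\nu_3+\nu_4$. By Proposition~\ref{prop:Horn4nr2} and the Knutson--Tao saturation theorem, $\Horn_4^{\nr^2}\cap{\overline\Horn}_4$ is exactly the set of lattice points of the rational polyhedral cone $C\subset L\otimes\QQ$ cut out by the inequalities of Proposition~\ref{prop:Horn4nr2} together with the requirement that $\nu$ be a partition; in particular $C$ is the cone generated by $\Horn_4^{\nr^2}\cap{\overline\Horn}_4$. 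By Gordan's lemma $C\cap L$ is a finitely generated semigroup, so both its set of extremal rays and its Hilbert basis are finite and effectively computable; this is the computation we carried out with Normaliz~\cite{normaliz}.

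\textbf{Extremal rays and the value $1$.} Next I would enumerate the extremal rays of $C$ by standard vertex enumeration --- intersecting the maximal collections of defining inequalities that become tight on the hyperplane $\lambda_1+2\lambda_2+\mu_1+2\mu_2=\nu_1+\nu_2+\nu_3+\nu_4$ --- and find exactly eight. Each primitive ray generator is a tiny triple (all entries in $\{0,1,2\}$) and is therefore immediately recognised as the triple coming from one of the eight inclusions listed. That each such triple indexes a Littlewood--Richardson coefficient equal to $1$ is a one-line check: cases (1)--(4) amount to tensoring with $V(1)$ or with the fundamental module $V(1^3)=\wedge^3\CC^4$, while case (5) follows from $\wedge^3\CC^4\cong\det\otimes(\CC^4)^*$, which gives $\wedge^3\CC^4\otimes\wedge^3\CC^4\cong\big(\mathrm{Sym}^2(\CC^4)^*\oplus\wedge^2(\CC^4)^*\big)\otimes\det^2=V(2^3)\oplus V(2^21^2)$. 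Since the primitive generator of an extremal ray of a pointed cone must lie in every generating set of the semigroup --- if $v$ is such a generator and $v=\sum_s n_s\,s$ with the $s$ in $C$ and $n_s\in\NN$, every contributing $s$ lies in the face $\QQ_{\ge0}v$, hence is a positive integer multiple of $v$, forcing $v=s$ for exactly one $s$ --- these eight vectors automatically belong to the Hilbert basis of $C\cap L$.

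\textbf{Nothing else in the Hilbert basis --- the main obstacle.} The genuinely non-automatic point, and the one I expect to carry the weight of the proof, is that the Hilbert basis contains \emph{nothing beyond} these eight rays, i.e. that every lattice point of $C$ is a non-negative integer combination of them; for a general pointed rational cone this is false. Here one uses the special geometry. The eight rays span the $7$-dimensional space $L\otimes\QQ$, hence satisfy a unique (up to scaling) linear relation, which one computes to be $r_1+r_4+r_7=r_2+r_3+r_6$, where $r_1,r_2$ are the two rays of type (1), $r_3,r_4$ those of type (2), $r_5$ that of type (3), $r_6,r_7$ those of type (4) and $r_8$ that of type (5). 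Thus $\{r_1,r_2,r_3,r_4,r_6,r_7\}$ is the unique circuit, with Radon partition $\{r_1,r_4,r_7\}\mid\{r_2,r_3,r_6\}$, and $C$ is triangulated by the three $7$-dimensional simplicial subcones obtained by removing one of $r_1,r_4,r_7$ from the list of eight rays. It then suffices to verify that each of these three simplicial cones is unimodular with respect to $L$ --- three $7\times 7$ determinant computations. Granting this, every lattice point of $C$ lies in one of the subcones and is a non-negative integer combination of seven of the eight generators, so the Hilbert basis equals the set of eight ray generators, as claimed. This final unimodularity verification is routine but tedious by hand, and it is what Normaliz certifies directly.
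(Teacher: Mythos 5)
Your proposal follows the paper's own route --- the authors justify this proposition purely by invoking a Normaliz computation, and your plan likewise delegates the decisive extremal-ray and Hilbert-basis checks to that software. What you add is useful structure that makes the machine output human-verifiable: I can confirm the claimed unique relation (writing each ray in the coordinates $(\lambda_1,\lambda_2,\mu_1,\mu_2,\nu_1,\dots,\nu_4)$, both $r_1+r_4+r_7$ and $r_2+r_3+r_6$ equal $(2,1,2,1,3,2,2,1)$, with $r_5$ and $r_8$ outside the circuit), so the triangulation into three $7$-dimensional simplicial subcones and the reduction of the Hilbert-basis claim to three unimodularity determinants is sound. Your representation-theoretic verification that each ray indexes $c_{\lambda\mu}^\nu=1$ is also correct, including the identification $\wedge^3\CC^4\otimes\wedge^3\CC^4\cong V(2^3)\oplus V(2^21^2)$ for case (5). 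One small thing worth making explicit if you write this up: the statement that removing one element of $\{r_1,r_4,r_7\}$ at a time yields a triangulation of the whole cone uses that these three rays lie on the same side of the unique circuit relation, so that the union of the three simplicial subcones is all of $C$ --- this is standard for a cone with a single circuit but should be said.
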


We get similar descriptions for $\Horn_4^\nr$.

\begin{prop}
\label{prop:horn4nr}
Let $\lambda, \mu$ in $\Lambda_4^0$ and $\nu$ in
  $\Lambda_4$ such that $\lambda$ is near-rectangular. 
Then $c_{\lambda\mu}^\nu\neq 0$ if and only if all of the following inequalities hold:
  $$
  \begin{array}{ll}
   \multicolumn{2}{c}{|\lambda|+|\mu|=|\nu|}\\
 
    \multicolumn{2}{c}{\nu_1\geq \nu_2\geq\nu_3\geq\nu_4 \geq 0}\\
\lambda_1\geq\lambda_2\geq 0
&
\mu_1\geq\mu_2\geq\mu_3\geq 0\\

\lambda_1+\mu_1\geq\nu_1
&
\min(\lambda_2+\mu_1, \lambda_1+\mu_2)\geq\nu_2\\

\min(\lambda_2+\mu_2, \lambda_1+\mu_3)\geq\nu_3
&
\min(\lambda_1,\mu_1, \lambda_2+\mu_3)\geq \nu_4\\
  \multicolumn{2}{c}{\nu_1\geq  \max(\lambda_1,\mu_1\lambda_2+\mu_2)\qquad
\nu_2\geq \max(\mu_2,\lambda_2+\mu_3)\qquad
\nu_3\geq\max(\lambda_2,\mu_3)}\\

\nu_1+\nu_2\geq\max(\lambda_1+\lambda_2+\mu_2,
\lambda_2+\mu_1+\mu_2)
 &
\nu_1+\nu_3\geq\max(\lambda_1+\lambda_2+\mu_3,
 \lambda_2+\mu_1+\mu_3)\\
  
\nu_2+\nu_3\geq \lambda_2+\mu_2+\mu_3
 &
\nu_1+\nu_4\geq\lambda_2+\mu_1\\
\nu_2+\nu_4\geq\lambda_2+\mu_2
 &
\nu_3+\nu_4\geq\lambda_2+\mu_3
  \end{array}
$$
We have 32 facets.
\end{prop}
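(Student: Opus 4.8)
The plan is to deduce the list of inequalities directly from the \emph{known} facets of the full Horn cone $\Horn_4^\QQ$ together with the Knutson--Tao saturation theorem, reducing the statement to a finite convex-geometry computation. First I would normalize $\lambda,\mu\in\Lambda_4^0$: by \eqref{eq:NbredSL} and the isomorphism $\Horn_n\simeq\ZZ^2\times\overline{\Horn}_n$ nothing is lost in assuming $\lambda_4=\mu_4=0$, and with this normalization ``$\lambda$ near-rectangular'' becomes the single linear condition $\lambda_2=\lambda_3$. Thus $\Horn_4^\nr\cap\overline{\Horn}_4$ is exactly the set of lattice points of $\Horn_4^\QQ\cap L$, where $L\subset\QQ^{12}$ is the rational subspace cut out by $\lambda_4=\mu_4=0$ and $\lambda_2=\lambda_3$. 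Since $\Horn_4$ is saturated \cite{KT:saturation}, for a lattice point $(\lambda,\mu,\nu)$ in $L$ one has $c_{\lambda\mu}^\nu\neq 0$ if and only if $(\lambda,\mu,\nu)\in\Horn_4^\QQ$, i.e.\ if and only if the dominance inequalities, the Weyl inequalities \eqref{eq:Weyl} and the remaining Horn inequalities \eqref{eq:Horn} for $n=4$ all hold; restricting this explicit finite list to $L$ already produces a system cutting out $\Horn_4^\nr\cap\overline{\Horn}_4$.

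The next step is to replace that system by its minimal (irredundant) sublist. Here I would use the elementary fact that, when one intersects a polyhedral cone $\{x:a_i\cdot x\leq 0\}$ with a linear subspace $L$ and parametrizes $L$ linearly by $x=Ty$, the section is $\{y:(a_i\cdot T)\,y\leq 0\}$, so \emph{every} facet of $\Horn_4^\QQ\cap L$ is the trace on $L$ of one of the (finitely many, explicitly known) Horn facets of $\Horn_4^\QQ$. Consequently the claimed $32$ facets must all occur among the restrictions of the Horn inequalities, and it only remains to discard those restrictions that have become redundant on $L$. Concretely: substitute $\lambda_3=\lambda_2$, $\lambda_4=\mu_4=0$ in each Horn inequality for $\GL_4$, use $|\nu|=|\lambda|+|\mu|$ to eliminate $\nu_4$ where convenient, and run a redundancy check (solving the associated linear programs, as with Normaliz \cite{normaliz} or directly from \cite{Fulton:survey}). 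This produces precisely the $32$ inequalities of the statement; for instance the extra relation $\nu_1+\nu_4\geq\lambda_2+\mu_1$ coming from Proposition~\ref{prop:stab} appears in the system but is implied by the others and hence dropped.

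Finally I would certify both directions. Validity: each of the $32$ listed inequalities should be identified as (a positive combination of) restrictions of genuine Horn inequalities of $\Horn_4^\QQ$, which is a direct check. Irredundancy and sufficiency: exhibit, for each of the $32$, a ray of $\Horn_4^\nr\cap\overline{\Horn}_4$ on which only that inequality is tight (so none of them follows from the others), and conversely verify that the $32$ inequalities together with $|\lambda|+|\mu|=|\nu|$ imply every restricted Horn inequality. The main obstacle is not conceptual but bookkeeping: one must carry the entire Horn list for $\GL_4$ through the substitution without error and certify the redundancy pattern, which is why the computation is delegated to Normaliz; the mathematical content is just saturation plus the facet-of-a-section principle. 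As a consistency check, Proposition~\ref{prop:stab} forces this face to be the same for all $n\geq4$ and describes the Littlewood--Richardson coefficients on it as the length of an explicit interval, whose nonnegativity locus must coincide with the cone cut out by these $32$ inequalities.
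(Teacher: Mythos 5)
Your strategy is exactly the one the paper relies on: take the known Horn inequalities for $\GL_4$, use Knutson--Tao saturation to identify nonvanishing of $c_{\lambda\mu}^\nu$ with membership in the Horn cone, restrict to the face $\lambda_2=\lambda_3$, $\lambda_4=\mu_4=0$, and let Normaliz find the minimal irredundant system; the paper simply states this (``By convex geometry and explicit computation... Softwares like Normaliz allow to make the computation'') without further detail. One small slip: you say that $\nu_1+\nu_4\geq\lambda_2+\mu_1$ ``appears in the system but is implied by the others and hence dropped,'' but in Proposition~\ref{prop:horn4nr} this inequality is actually retained as one of the 32 facets --- you have confused it with the remark following Proposition~\ref{prop:Horn4nr2}, where (in the smaller $\nr^2$ cone, with \emph{both} $\lambda$ and $\mu$ near-rectangular) that same inequality does become redundant.
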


\begin{prop}
  The cone generated by $\Horn_4^{\nr}\cap {\overline\Horn}_4$ has 12
  extremal rays generated by
  the triples $(\lambda,\mu,\nu)$
  associated to the following inclusions
  \begin{enumerate}
  \item $V(1)\subset V(1)\otimes V(0)$, $V(1)\subset V(111)\otimes
    V(0)$, $V(1)\subset V(0)\otimes V(1)$, $V(11)\subset V(0)\otimes
    V(11)$  and $V(111)\subset V(0)\otimes V(111)$ ;
\item $V(1^2)\subset V(1)\otimes V(1)$ ;
\item $V(1^4)$ is contained in $V(1)\otimes V(1^3)$ and $V(1^3)\otimes
  V(1)$ ;

\item $V(2211)$ is contained in $V(1^3)\otimes V(1^3)$ and $V(211)\otimes
  V(11)$ ;
\item $V(1^3)\subset V(1)\otimes V(11)$ ;
\item $V(21^3)\subset V(1^3)\otimes V(1^2)$.
  \end{enumerate}
Each triple $(\lambda,\mu,\nu)$ on one of these  extremal rays indexes a Littlewood-Richardson coefficient with value 1.  
The Hilbert basis of $\Horn_4^{\nr}\cap {\overline\Horn}_4$ consists in these 12 generators.
\end{prop}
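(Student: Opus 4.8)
The plan is to treat this as an explicit polyhedral computation on the pointed $8$-dimensional cone $C:=\Horn_4^{\nr}\cap{\overline\Horn}_4$, for which Proposition~\ref{prop:horn4nr} already provides a complete, and in fact minimal, list of $32$ facet inequalities. I would coordinatize $C$ by $(\lambda_1,\lambda_2,\mu_1,\mu_2,\mu_3,\nu_1,\nu_2,\nu_3,\nu_4)$ subject to the single relation $\lambda_1+2\lambda_2+\mu_1+\mu_2+\mu_3=\nu_1+\nu_2+\nu_3+\nu_4$, and first write down the coordinate vector of each of the $12$ triples $(\lambda,\mu,\nu)$ named in the statement; for instance $V(1^2)\subset V(1)\otimes V(1)$ gives $((1,0),(1,0,0),(1,1,0,0))$, $V(21^3)\subset V(1^3)\otimes V(1^2)$ gives $((1,1),(1,1,0),(2,1,1,1))$, and $V(2^21^2)\subset V(1^3)\otimes V(1^3)$ gives $((1,1),(1,1,1),(2,2,1,1))$. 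For each of these I would check that it satisfies all $32$ inequalities of Proposition~\ref{prop:horn4nr}, so that the subcone $C'$ they generate lies in $C$, and record which of the $32$ inequalities the vector makes tight, verifying that these saturated facet normals span a hyperplane (rank $7$) inside the linear hull of $C$. This shows that the $12$ vectors span pairwise distinct extremal rays of $C'$ lying in $C$, hence that $C$ has \emph{at least} these $12$ extremal rays.

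The reverse inclusion $C\subseteq C'$, i.e. that there is no further extremal ray, is where the work lies. The direct route is to run the double-description (Fourier-Motzkin) algorithm on the $32$ inequalities, as implemented in Normaliz \cite{normaliz}, and to check that its output $V$-description is exactly these $12$ vectors; equivalently, one computes the facet inequalities of $C'$ and verifies that they reproduce verbatim the $32$ inequalities of Proposition~\ref{prop:horn4nr}, so that $C$ and $C'$ share an H-description and therefore coincide. A human-checkable certificate, albeit tedious, is also available: since an extremal ray of an $8$-dimensional cone saturates at least $7$ facets with rank-$7$ normals, one enumerates the $\binom{32}{7}$ choices of seven facets, discards those of rank $<7$ or whose common solution falls outside $C$, and matches the survivors with the listed $12$ rays.

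For the value-$1$ assertion, each of the $12$ triples has all parts in $\{0,1,2\}$, so I would compute $c_{\lambda\mu}^\nu$ directly from the Pieri/Littlewood-Richardson rule or from the hive model reviewed above; for example $c_{(1),(1)}^{(1,1)}=1$, $c_{(1),(1^3)}^{(1^4)}=1$, and since $s_{1^3}s_{1^3}=s_{(2,2,2)}+s_{(2,2,1,1)}$ inside $\GL_4(\CC)$ one gets $c_{(1^3),(1^3)}^{(2,2,1,1)}=1$, the remaining cases being analogous. Finally, for the Hilbert basis claim I would note that each of the $12$ ray generators is a primitive integer vector, so by the Knutson-Tao saturation theorem every generating set of the monoid $\Horn_4^{\nr}\cap{\overline\Horn}_4=C\cap\ZZ^8$ must contain all twelve; that they already suffice --- that $C$ contains no further lattice point failing to be a nonnegative integer combination of them --- is again a finite check, bounded because the cone is pointed with short generators, which I would perform with Normaliz.

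The step I expect to be the main obstacle is the \emph{completeness} half of each statement: that the listed rays exhaust the extremal rays of $C$ and that they generate the lattice monoid and not merely the rational cone. Verifying that each listed vector is extremal, and that it indexes a Littlewood-Richardson coefficient equal to $1$, is routine; excluding phantom extremal rays and phantom Hilbert-basis elements is the part that genuinely requires the polyhedral and integer-programming computations, and there I would rely on Normaliz, keeping the $\binom{32}{7}$ facet enumeration available as an in-principle certificate.
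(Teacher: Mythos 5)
Your proposal is correct and takes essentially the same route as the paper: the article gives no written proof of this proposition beyond the remark preceding Proposition~\ref{prop:Horn4nr2} that ``Softwares like Normaliz \cite{normaliz} allow to make the computation,'' and your plan --- verify each listed vector saturates seven independent facets from Proposition~\ref{prop:horn4nr}, confirm completeness of the $V$-description and of the Hilbert basis with Normaliz, and evaluate the twelve Littlewood--Richardson coefficients directly by Pieri/hives --- is precisely that computation made explicit. Your coordinate vectors, dimension count, and sample LR evaluations all check out.
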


\subsection{Special case of self-dual representations}
\label{sec:special_case_m-0-0}

Let $k$ and $l$ be two nonnegative integers and $n\geq 4$. 
The $\SL_n(\CC)$-modules  $V_n((2k)k^{n-2})$,  $V_n((2l)l^{n-2})$ and
hence  $V_n((2k)k^{n-2}) \otimes V_n((2l)l^{n-2})$ are self-dual.

In \cite[Section~8]{PW:multiplicities}, conjectural values (for $n=6$)
 are given for the numbers of isotypic components in
 $V_n((2k)k^{n-2}) \otimes V_n((2l)l^{n-2})$ and for the numbers of self-dual isotypic components. 
Here, we prove and extend these formulas.

\begin{coro}
\label{cor:nbisotnm}
Assume up to symmetry that $l\leq k$.
  The number of distinct isotypic components in $V_n((2k)k^{n-2})
  \otimes V_n((2l)l^{n-2})$ is given by
$$
\left\{
  \begin{array}{l@{\ \mathrm{if}\ }l}
    l^3+3l^2+3l+1& 2l\leq k\\
\frac{1}{3}k^{3} - 2 k^{2} l + 4 k l^{2} - \frac{5}{3} l^{3} -  k^{2}
    + 4kl -  l^{2} + \frac{2}{3} k+ \frac{5}{3} l + 1
&2l\geq k
  \end{array}
\right .
$$
\end{coro}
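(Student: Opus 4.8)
The plan is to apply Proposition~\ref{prop:stab} directly. Indeed, the modules in question correspond to the near-rectangular partitions $\lambda=(2k)k^{n-2}$ and $\mu=(2l)l^{n-2}$ (with $\lambda_n=\mu_n=0$), so $\lambda_1=2k$, $\lambda_2=k$, $\mu_1=2l$, $\mu_2=l$. By Proposition~\ref{prop:stab}, the isotypic components of $V_n(\lambda)\otimes V_n(\mu)$ are indexed by partitions $\nu=\nu_1\nu_2(k+l)^{n-4}\nu_{n-1}\nu_n$ with $\nu_1\geq\nu_2\geq k+l\geq\nu_{n-1}\geq\nu_n\geq 0$, and the constraint $|\nu|=|\lambda|+|\mu|$ reads $\nu_1+\nu_2+\nu_{n-1}+\nu_n=3k+3l$ (using $|\lambda|=2k+(n-2)k=nk$ when $\lambda_n=0$, but with the normalization $\lambda_n=0$ one has $|\lambda|+|\mu|-(n-4)(k+l)=(2k+k)+(2l+l)=3(k+l)$). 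So the number of distinct isotypic components is exactly
\[
\#\{(\nu_1,\nu_2,\nu_{n-1},\nu_n)\in\ZZ^4 : \nu_1\geq\nu_2\geq k+l\geq\nu_{n-1}\geq\nu_n\geq 0,\ \nu_1+\nu_2+\nu_{n-1}+\nu_n=3(k+l),\ c_{\lambda\mu}^\nu>0\}.
\]

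Next I would substitute the explicit values of $\lambda,\mu$ into the formula for $M$ and $m$ from Proposition~\ref{prop:stab}. We get
\[
M=\max(0,\ k+2l-\nu_1,\ \nu_n-l),\qquad
m=\min(\nu_1',\ k+2l-\nu_2,\ \nu_{n-1}+\nu_n-k-l,\ \nu_{n-1}-l),
\]
where the term $\lambda_1+\mu_1-\nu_1=2k+2l-\nu_1$ appears; one then has $c_{\lambda\mu}^\nu>0$ iff $M\le m$, i.e. the eight pairwise inequalities $\psi_j\le\varphi_i$ hold between the three linear forms in the $\max$ and the four in the $\min$. The condition $c_{\lambda\mu}^\nu>0$ is therefore equivalent to a finite explicit system of linear inequalities in $(\nu_1,\nu_2,\nu_{n-1},\nu_n,k,l)$. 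Combined with the defining inequalities of the index set, this describes the desired count as the number of lattice points in a two-dimensional polytope $P_{k,l}$ (two degrees of freedom remain after the equality constraint and since $\nu$ has four free parts) depending linearly on $(k,l)$.

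Then I would carry out the lattice-point count. Parametrize, say, by $\nu_1$ and $\nu_n$ (so $\nu_2=3(k+l)-\nu_1-\nu_{n-1}-\nu_n$ with $\nu_{n-1}$ determined, or keep $\nu_1,\nu_{n-1}$ as the free pair), reduce all constraints to inequalities on these two variables, and split into the two regimes $2l\le k$ and $2l\ge k$: the combinatorial type of $P_{k,l}$ (which edges are active) changes across the wall $k=2l$, which is exactly why the answer is given by two different cubic polynomials. In the regime $2l\le k$ one expects the polytope to be (affinely) a dilated simplex of size roughly $l$, giving $\binom{l+3}{3}=\frac{(l+1)(l+2)(l+3)}{6}$... but the stated answer is $(l+1)^3$, so more carefully the count is a product/triangular-number combination yielding $l^3+3l^2+3l+1=(l+1)^3$; in the regime $2l\ge k$ one gets the stated degree-3 quasi-polynomial, which by Proposition~\ref{prop:introSL4nr2}-type reasoning is honestly a polynomial here. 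Finally I would check the two formulas agree on the wall $k=2l$ (both give $8l^3+12l^2+6l+1=(2l+1)^3$, which is a good consistency check) and confirm boundary cases such as $l=0$ (both give $1$, correctly: $V_n(\lambda)\otimes V_n(0)=V_n(\lambda)$ when... wait, $l=0$ gives $\mu=0$, so indeed a single component).

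The main obstacle will be the lattice-point count itself: carefully determining which facets of $P_{k,l}$ are active in each of the two regimes, handling the floor/ceiling issues that could a priori introduce congruences (one must verify none survive, so that the answer is a genuine polynomial and not merely a quasi-polynomial), and getting the cubic coefficients exactly right — in particular the mixed terms $-2k^2l+4kl^2$ in the second formula. A clean way to organize this is to write the count as a sum over one variable of the length of an interval in the other (an explicit $\min$ minus $\max$ plus one), then sum the resulting piecewise-linear function; the summation naturally breaks into the two regimes, and Faulhaber-type formulas produce the cubics. I would double-check the final polynomials against a few small numerical values computed by hand or cite the computer verification mentioned in the paper.
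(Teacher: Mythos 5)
Your proposal follows the same route as the paper's proof: Proposition~\ref{prop:stab} reduces to $n=4$, after which the number of isotypic components is a lattice-point count in a polytope depending piecewise-linearly on $(k,l)$ (the paper quotes the equivalent Horn inequalities of Proposition~\ref{prop:Horn4nr2} and delegates the enumeration to Barvinok's algorithm via \cite{iscc}, while you substitute directly into the $M\le m$ condition from Proposition~\ref{prop:stab}, which cuts out the same region). One arithmetic slip worth fixing: your wall check at $k=2l$ claims both formulas equal $(2l+1)^3$, but the first formula is constant in $k$ throughout its regime and equals $(l+1)^3$, and substituting $k=2l$ into the second gives $\tfrac{8}{3}l^3-8l^3+8l^3-\tfrac{5}{3}l^3-4l^2+8l^2-l^2+\tfrac{4}{3}l+\tfrac{5}{3}l+1=l^3+3l^2+3l+1=(l+1)^3$ as well --- so the two branches do glue continuously, just not at the value you computed.
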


\begin{proof}
By Proposition~\ref{prop:stab}, one may assume that $n=4$.
Then, Proposition~\ref{prop:Horn4nr2} shows that $\nu\in\ZZ^4$ is the
highest weight of an isotypic component of   $V_4((2k)k^{2}) \otimes
V_4((2l)l^{2})$
if and only if (recall that $l\leq k$) all of the following conditions
hold:

\begin{equation}
\begin{array}{c@{\ \quad\ }c}
  4(k+l)=|\nu|& \nu_1\geq \nu_2\\
\nu_4\geq 0&\nu_3+\nu_4\geq k+l\\
\multicolumn{2}{c}{\nu_1+\nu_3\geq 3k+l}\\
\multicolumn{2}{c}{2n+m\geq\nu_2\geq k+l\geq \nu_3\geq k}
\end{array}
\label{eq:in6}
\end{equation}
The corollary follows by explicit computations that can be performed with \cite{iscc}.
\end{proof}

Similarly, one gets the number of self-dual representations.

\begin{coro}
\label{cor:nbisotnm2}
Assume up to symmetry that $l\leq k$.
The $\SL_n(\CC)$-module $V_n((2k)k^{n-2})
  \otimes V_n((2l)l^{n-2})$ contains $(l+1)^2$
  distinct self-dual isotypic components.
\end{coro}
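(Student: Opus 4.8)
The plan is to reduce to $n=4$ via Proposition~\ref{prop:stab} and then count the self-dual $\nu$ among the highest weights $\nu$ of isotypic components of $V_4((2k)k^2)\otimes V_4((2l)l^2)$, using the inequalities~\eqref{eq:in6} already established in the proof of Corollary~\ref{cor:nbisotnm}. A highest weight $\nu=(\nu_1,\nu_2,\nu_3,\nu_4)\in\Lambda_4$ indexes a self-dual $\SL_4(\CC)$-module exactly when $\nu^*=\nu$ as $\SL_4$-weights, i.e. when $\nu_1-\nu_4=\nu_1-\nu_3$ and $\nu_1-\nu_3=\nu_1-\nu_2$ up to a common shift; equivalently, writing the condition in the partition normalization $\nu_4=0$ coming from $\overline\Horn_4$, self-duality amounts to $\nu_1+\nu_4=\nu_2+\nu_3$. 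Combined with $|\nu|=\nu_1+\nu_2+\nu_3+\nu_4=4(k+l)$ this forces $\nu_2+\nu_3=2(k+l)$ and $\nu_1+\nu_4=2(k+l)$.

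First I would substitute $\nu_2+\nu_3=2(k+l)$ and $\nu_1+\nu_4=2(k+l)$ into the system~\eqref{eq:in6}. The constraint $\nu_3+\nu_4\ge k+l$ together with $\nu_1+\nu_3\ge 3k+l$ and the chain $\nu_2\ge k+l\ge\nu_3\ge k$ should collapse to a two-parameter count. Concretely, set $\nu_3=k+t$ with $0\le t\le l$ (from $k\le\nu_3\le k+l$), so $\nu_2=2(k+l)-\nu_3=k+2l-t$; one checks $\nu_2\ge k+l$ is automatic since $t\le l$, and $\nu_2\le 2k+l$ (the upper bound $2n+m\ge\nu_2$, i.e. $2k+l\ge\nu_2$) becomes $k+2l-t\le 2k+l$, i.e. $t\ge l-k$, which holds as $l\le k$. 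Then $\nu_1$ is free subject to $\nu_1\ge\nu_2=k+2l-t$, $\nu_4=2(k+l)-\nu_1\ge 0$ i.e. $\nu_1\le 2(k+l)$, $\nu_3+\nu_4\ge k+l$ i.e. $\nu_1\le k+l+\nu_3=2k+l+t$, and $\nu_1+\nu_3\ge 3k+l$ i.e. $\nu_1\ge 2k-t$. So for each $t$ the admissible $\nu_1$ range is $[\max(k+2l-t,2k-t),\min(2(k+l),2k+l+t)]$. Since $l\le k$ we have $k+2l-t\le 2k-t$ is equivalent to $2l\le k$... here one must be slightly careful: actually $k+2l-t$ vs $2k-t$ compares $2l$ vs $k$, and $2(k+l)$ vs $2k+l+t$ compares $l$ vs $t$. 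I would split into the cases $t\le l$ and handle the max/min piecewise, but the key point is that the total count is $\sum_{t=0}^{l}(\text{length of the }\nu_1\text{-interval}+1)$.

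The main step is then the arithmetic: for $t$ ranging over $0,\dots,l$, with $l\le k$, one finds the lower end is $2k-t$ (since $k\ge 2l-t$ would need checking, but in the regime $l\le k$ and $t\le l$ one has $2k-t\ge k+2l-t\iff k\ge 2l$; both subcases must be summed) and the upper end is $2k+l+t$ when $t\le l$ gives $2k+l+t\le 2(k+l)\iff t\le l$, always true, so the upper end is $2k+l+t$. Hence the interval length plus one is $(2k+l+t)-(2k-t)+1=2t+l+1$ in the subcase where the naive endpoints dominate, but the genuine lower endpoint is $\max(k+2l-t,2k-t)$. The cleanest route: I expect that after the dust settles the count is exactly $\sum_{t=0}^{l}(l+1)=(l+1)^2$, i.e. for each of the $l+1$ values of $t=\nu_3-k$ there are exactly $l+1$ admissible values of $\nu_1$. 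I would verify this by checking that $\min(2(k+l),2k+l+t)-\max(k+2l-t,2k-t)+1$ simplifies to $l+1$ for all $0\le t\le l$ and $l\le k$: indeed $2k+l+t\le 2(k+l)$ and, since $t\le l\le k$, $2k-t\ge k+2l-t\iff k\ge 2l$ splits the max, but in either subcase the difference is $(2k+l+t)-(2k-t)-1$... here I would just push through the two subcases $k\ge 2l$ and $k<2l$ separately and confirm the answer is $l+1$ in both. The hard part is not conceptual but making sure none of the eleven inequalities of Proposition~\ref{prop:Horn4nr2} is missed or becomes binding in an unexpected way; I would double-check each of the original inequalities (in particular $\nu_1\ge\nu_2$, which forces $\nu_1\ge k+2l-t$ and must be reconciled with $\nu_1\ge 2k-t$) against the parametrization before concluding that the number of self-dual isotypic components is $(l+1)^2$.
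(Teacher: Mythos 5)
Your approach coincides with the paper's: reduce to $n=4$ by Proposition~\ref{prop:stab}, impose the self-duality condition $\nu_1+\nu_4=\nu_2+\nu_3$ on top of the inequalities~\eqref{eq:in6}, and count. The paper then hands the count to \texttt{iscc}; you try to finish it by hand, which is perfectly legitimate and in fact more informative — but your arithmetic goes off the rails at the key step, and the ``expected'' per-$t$ count you then try to confirm is not the right one.

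Concretely, with $\nu_3=k+t$ (so $\nu_2=k+2l-t$ and $\nu_4=2(k+l)-\nu_1$), the inequality $\nu_1+\nu_3\geq 3k+l$ gives
\[
\nu_1 \;\geq\; 3k+l-\nu_3 \;=\; 2k+l-t,
\]
not $\nu_1\geq 2k-t$ as you wrote. Once you fix this, the case split you were worried about evaporates: since $l\leq k$ we have $2k+l-t \geq k+2l-t = \nu_2$ for every $t$, so the binding lower bound on $\nu_1$ is always $2k+l-t$, regardless of whether $k\geq 2l$ or $k<2l$. On the upper side, $\min\bigl(2(k+l),\,2k+l+t\bigr)=2k+l+t$ because $t\leq l$. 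Hence for each $t\in\{0,\dots,l\}$ the number of admissible $\nu_1$ is
\[
(2k+l+t)-(2k+l-t)+1 \;=\; 2t+1,
\]
\emph{not} $l+1$. Your conjectured per-$t$ count of $l+1$ is wrong, and if you had pushed your (erroneous) bound $\nu_1\geq 2k-t$ through you would have gotten $(l+1)(2l+1)$ rather than $(l+1)^2$. With the corrected bound, the total is $\sum_{t=0}^{l}(2t+1)=(l+1)^2$, which is the claimed answer. (You should also note that, independently of the typo ``$2n+m$'' in~\eqref{eq:in6}, the tightest upper bound on $\nu_2$ from Proposition~\ref{prop:Horn4nr2} is $\lambda_2+\mu_1=k+2l$ when $l\leq k$; on the self-dual slice this is automatic from $\nu_3\geq k$, so it does not affect the count — but it is worth checking rather than assuming.)

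So: the plan is sound and the answer is right, but the step you identify as ``the hard part'' — verifying that the interval length is what you guessed — would in fact have failed as stated. The correct structure is a sum of odd numbers, not a constant times $l+1$.
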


\begin{proof}
By Proposition~\ref{prop:stab}, one may assume that $n=4$.
Then, the set of self-dual isotypic components of   $V_4((2k)k^{2}) \otimes
V_4((2l)l^{2})$ are obtained by adding the condition
$$
\nu_1+\nu_4=\nu_2+\nu_3
$$
to the conditions~\eqref{eq:in6}.
The corollary follows by explicit computations that can be performed with \cite{iscc}.
\end{proof}

\subsection{Computation of $\Nb(c_{\lambda\mu}^\bullet> c)$ for $\lambda$ and $\mu$ near-rectangular}
\label{sec:SL4nr2}

In this subsection, we report on the computation of the function
$$
\begin{array}{cccl}
 \Nb_4(c_{\lambda\mu}^\bullet> c)\,:
  &(\Lambda_4^\nr)^2\times\NN&\longto&\NN\\
&(\lambda,\mu,c)&\longmapsto&\#\{
\nu\in\Lambda_4\,:\,c_{\lambda\mu}^\nu> c\}.
\end{array}
$$
By Proposition~\ref{prop:stab}, this function determines
$\Nb_n(c_{\lambda\mu}^\bullet> c)$ for any near-rectangular
partitions $\lambda$ and $\mu$ of length $n\geq 4$.

Since Propositions~\ref{prop:LR3} and \ref{prop:stab} give similar
expressions for the Littlewood-Richardson coefficient, we can apply
the strategy of Section~\ref{sec:SL3}. 

We get that $\Nb_4(c_{\lambda\mu}^\bullet> c)$ is the number of points $\nu\in\Lambda_4$ such that $\lambda_1+2\lambda_2+\mu_1+2\mu_2=\nu_1+\nu_2+\nu_3+\nu_4$ and
 
\[
\begin{array}{l@{\qquad}l}
-\lambda_2-\mu_2+\nu_2\geq 0 & \lambda_2+\mu_2-\nu_3\geq 0\\
\lambda_1-\lambda_2\geq c & -\lambda_2+\nu_3\geq c\\
\nu_1-\nu_2\geq c & \nu_3-\nu_4\geq c\\
\lambda_1+\mu_1-\nu_1\geq c & \lambda_2+\mu_1-\nu_2\geq c\\
-\lambda_2-\mu_2+\nu_3+\nu_4\geq c & -\mu_2+\nu_3\geq c\\
\lambda_1+\mu_2-\nu_2\geq c & \lambda_1+\lambda_2+\mu_2-\nu_2-\nu_4\geq c\\
\lambda_1+\mu_1+\mu_2-\nu_1-\nu_4\geq c & \lambda_2+\mu_1+\mu_2-\nu_2-\nu_4\geq c
\end{array}
\]

In particular, it is the number of integer points in some polytope depending linearly on the data $(\lambda,\mu,c)$. 
Then $\Nb_4(c_{\lambda\mu}^\bullet> c)$ is piecewise quasi-polynomial, and
 can be computed using Barvinok's algorithm. Surprisingly, here
 $\Lambda=(\Lambda_4^\nr)^2\times\ZZ$ and $\Nb_4(c_{\lambda\mu}^\bullet> c)$ is piecewise polynomial.

As in Section~\ref{sec:SL3}, from this point on we use the basis of
fundamental weights to write $\lambda=k_1\varpi_1+k_2\varpi^*_1$ and
$\mu=l_1\varpi_1+l_2\varpi^*_1$. Thus the symmetry we want to observe
is once again with respect to swapping  $k_1$ and $k_2$. Consider the function
\[ \begin{array}{rccl}
\psi: & \NN^5 & \longrightarrow & \NN\\
 & (k_1,k_2,l_1,l_2,c) & \longmapsto & \#\{\nu\in\Lambda_4\,|\,c_{k_1\varpi_1+k_2\varpi^*_1,l_1\varpi_1+l_2\varpi^*_1}^\nu >c\}
\end{array}. \]

We now give details about the results in
Proposition~\ref{prop:introSL4nr2}:

\begin{prop}
\label{prop:calculSL4nr2}
  We have $\psi(k_1,k_2,l_1,l_2,c)=0$ unless
\[
c\leq\min(k_1,k_2,l_1,l_2).
\]
Moreover, this cone decomposes into 36 cones $C_1,\dots,C_{36}$ on which $\psi$ is given by 
polynomial functions $P_1,\dots,P_{36}$. 12 of these pairs
$(C_i,P_i)$ are kept unchanged by swapping $k_1$ and $k_2$ (namely $(C_1,P_1)$ to $(C_{12},P_{12})$). The 24 other such
pairs are pairwise swapped by this operation (for all $i\in\{7,\dots,18\}$, $(C_{2i-1},P_{2i-1})$ and $(C_{2i},P_{2i})$ are swapped).

In particular, Conjecture~\ref{conj:setLR} holds for $\GL_4$ and $\lambda,\mu$ near-rectangular. 
\end{prop}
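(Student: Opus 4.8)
The plan is to follow the strategy already rehearsed for $\GL_3$ in Section~\ref{sec:SL3}, now applied to the explicit system of $13$ inequalities displayed just above (together with the affine equation $|\lambda|+|\mu|=|\nu|$). The starting point is that, by Proposition~\ref{prop:stab}, every Littlewood-Richardson coefficient $c_{\lambda\mu}^\nu$ with $\lambda,\mu$ near-rectangular and $n\geq 4$ equals the number of lattice points in the interval $\llbracket M,m\rrbracket$ with $M,m$ given there; hence $c_{\lambda\mu}^\nu>c$ holds precisely when, for each linear form $\varphi$ appearing in the $\min$ defining $m$ and each $\psi$ in the $\max$ defining $M$, one has $\varphi-\psi\geq c$. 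Writing these out (and adjoining the dominance inequalities on $\lambda,\mu,\nu$ that are implicit in Proposition~\ref{prop:Horn4nr2}) gives exactly the listed system; so $\psi(k_1,k_2,l_1,l_2,c)$ counts lattice points $\nu\in\Lambda_4$ in a polytope whose defining inequalities depend linearly (affinely) on $(k_1,k_2,l_1,l_2,c)$.

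First I would recast the count as a vector partition function exactly as in Section~\ref{sec:SL3}: perform the change of variables turning the dominance slack variables into nonnegative unknowns, collect the inequalities into a matrix, introduce slack variables to pass to an equality system $\tilde M X = -B Y$ with $X\in\NN^{\bullet}$ and $Y=(k_1,k_2,l_1,l_2,c)$, and then invoke the general theory of vector partition functions / multivariate Ehrhart quasi-polynomials (as cited via \cite{Sturm:vectpart,Barv}) to conclude that $\psi$ is piecewise quasi-polynomial, of degree equal to the dimension of the fiber, namely $3$. Running Barvinok's algorithm (via \cite{iscc}) then produces the explicit chamber decomposition and the polynomials; the (a priori unexpected, but observed) facts that no nontrivial congruence occurs and that the support cone is exactly $\{c\geq 0,\ k_i\geq c,\ l_j\geq c\}$ are read off from this output, and after gluing the raw cones one obtains the $36$ maximal cones $C_1,\dots,C_{36}$ together with the degree-$3$ polynomials $P_1,\dots,P_{36}$, to be tabulated in this subsection.

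The symmetry statement is then proved the same way as for $\GL_3$: swapping $k_1\leftrightarrow k_2$ corresponds to replacing $\lambda$ by $\lambda^*$, so it suffices to verify that the list of $36$ pairs $(C_i,P_i)$ is invariant under this involution. Concretely one checks, cone by cone, that $12$ of the pairs (those I would index $(C_1,P_1)$ through $(C_{12},P_{12})$) are each individually fixed — i.e. both the defining inequalities of $C_i$ and the polynomial $P_i$ are symmetric in $k_1,k_2$ — and that the remaining $24$ fall into $12$ pairs interchanged by the involution; this interchange can be detected purely mechanically on the normal vectors of the facets and on the coefficient vectors of the polynomials. Invariance of the list immediately gives $\psi(k_1,k_2,l_1,l_2,c)=\psi(k_2,k_1,l_1,l_2,c)$ for all nonnegative arguments, which is the displayed identity $\Nb_4(c_{\lambda\mu}^\bullet>c)=\Nb_4(c_{\lambda^*\mu}^\bullet>c)$, and hence Conjecture~\ref{conj:setLR} for $\GL_4$ with $\lambda,\mu$ near-rectangular.

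The main obstacle is not conceptual but is the reliability of the computer-assisted part: one must make sure that the $36$ cones really cover the support cone without overlap, that the gluing of Barvinok's raw output into these $36$ cones is correct, and — most delicately — that the pairing up of the $24$ non-self-symmetric pairs under $k_1\leftrightarrow k_2$ is exact, including that the chamber walls themselves are permuted correctly (a single mismatched inequality or a sign error in one $P_i$ would break the symmetry argument). I would therefore cross-check the output against an independent enumeration — e.g. recomputing $\psi$ directly from the hive interval $\llbracket M,m\rrbracket$ of Proposition~\ref{prop:stab} on a dense grid of $(k_1,k_2,l_1,l_2,c)$, and separately verifying the involution invariance of the tabulated $(C_i,P_i)$ symbolically — with details recorded in the supplementary material \cite{wp}.
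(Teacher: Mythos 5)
Your proposal is essentially the paper's own proof: reduce to $n=4$ via Proposition~\ref{prop:stab}, translate $c_{\lambda\mu}^\nu>c$ into the $12$ differences $\varphi-\psi\geq c$ from the interval $\llbracket M,m\rrbracket$ (together with the two constraints $\nu_2\geq\lambda_2+\mu_2\geq\nu_3$ also coming from Proposition~\ref{prop:stab}, not from Proposition~\ref{prop:Horn4nr2}), recast as a vector-partition count as in Section~\ref{sec:SL3}, run Barvinok's algorithm to get the $36$ cones and degree-$3$ polynomials, and read the symmetry off the computed list. The only slips are bookkeeping: the displayed system has $14$ (not $13$) inequalities, and the two ``extra'' ones are the boundary conditions $\nu_2\geq\lambda_2+\mu_2\geq\nu_3$ from the stability result rather than dominance inequalities imported from Proposition~\ref{prop:Horn4nr2}.
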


Now to present as clearly as possible these cones and polynomials without writing all of them, let us use the two following involutions: $s_1$ corresponding to the exchange of $k_1$ and $k_2$, and $s_2$ corresponding to swapping $(k_1,k_2)$ and $(l_1,l_2)$. Then $\langle s_1,s_2\rangle$ acts on the set of all pairs $(C_i,P_i)$ with 8 orbits. Let us give below one representative for each one of these. The labelling is the one of the complete list \cite[pol\_and\_cones\_SL4nr2.txt]{wp}, chosen so that the stability when swapping $k_1$ and $k_2$ is easier to see:
\[ C_1:\quad l_1+l_2\leq k_1+c,\quad l_1+l_2\leq k_2+c, \]
\[ P_1=\left(-\frac{1}{2}\right) \cdot (-l_{2} + c - 1) \cdot (-l_{1} + c - 1) \cdot (-l_{1} - l_{2} + 2 c - 2) \]
has a $\langle s_1,s_2\rangle$-orbit of size 2;
\[ C_{16}:\quad l_1+l_2\leq k_1+c,\quad l_1+l_2\geq k_2+c,\quad k_2\geq l_1,\quad k_2\geq l_2, \]
\[ P_{16}=P_1-\begin{pmatrix}
-k_2+l_1+l_2-c+2\\
3
\end{pmatrix} \]
has an orbit of size 4;
\[ C_2:\quad l_1+l_2\geq k_1+c,\quad l_1+l_2\geq k_2+c,\quad k_1\geq l_1,\quad k_1\geq l_2,\quad k_2\geq l_1,\quad k_2\geq l_2, \]
\[ P_2=P_{16}-\begin{pmatrix}
-k_1+l_1+l_2-c+2\\
3
\end{pmatrix} \]
has an orbit of size 2;
\[ C_{19}:\quad l_1+l_2\geq k_1+c,\quad k_1\geq l_1,\quad k_2\leq l_1,\quad k_2\geq l_2, \]
\[ P_{19}=P_2+\begin{pmatrix}
-k_2+l_1+1\\
3
\end{pmatrix} \]
has an orbit of size 8;
\[ C_{21}:\quad l_1+l_2\leq k_1+c,\quad k_2\leq l_1,\quad k_2\geq l_2, \]
\[ P_{21}=P_{16}+\begin{pmatrix}
-k_2+l_1+1\\
3
\end{pmatrix} \]
has an orbit of size 8;
\[ C_{29}:\quad k_1+k_2\geq l_1+l_2,\quad l_1+l_2\geq k_1+c,\quad k_2\leq l_1,\quad k_2\leq l_2, \]
\[ P_{29}=P_{19}+\begin{pmatrix}
-k_2+l_2+1\\
3
\end{pmatrix} \]
has an orbit of size 4;
\[ C_{27}:\quad k_1+k_2\leq l_1+l_2,\quad k_1\geq l_1,\quad k_1\geq l_2, \]
\[ P_{27}=P_{29}+\begin{pmatrix}
-k_1-k_2+l_1+l_2+1\\
3
\end{pmatrix} \]
has an orbit of size 4; finally,
\[ C_{36}:\quad l_1+l_2\leq k_1+c,\quad k_2\leq l_1,\quad k_2\leq l_2, \]
\[ P_{36}=P_{21} +
\begin{pmatrix}
-k_2+l_2+1\\
3
\end{pmatrix} \]
also has an orbit of size 4.

\bigskip
\begin{rem}
  One can observe that the polynomials $P_i$ are expressed using
  each other. 
We exploit here the fact that the difference between two polynomials
associated to two adjacent cones has a simple expression theoretically
given by the Paradan formula \cite{Paradan:jump,BV:Paradan}. 
\end{rem}
 
\subsection{Computation of $\Nb_4(c_{\lambda\mu}^\bullet> 0)$ for $\lambda$  near-rectangular}\label{sec:GL4conj2}

In this section, we report on the computation of the function
$$
\begin{array}{cccl}
 \Nb_4(c_{\lambda\mu}^\bullet> 0)\,:
  &\Lambda_4^\nr\times\Lambda_4&\longto&\NN\\
&(\lambda,\mu)&\longmapsto&\#\{
\nu\in\Lambda_4\,:\,c_{\lambda\mu}^\nu> 0\}.
\end{array}
$$
As we recalled in Proposition~\ref{prop:nbisquasipol},
$\Nb_4(c_{\lambda\mu}^\bullet>0)$ is the number of integer points
in an affine section of the Horn cone. The inequalities defining this cone
are explicitly given in Proposition~\ref{prop:horn4nr}.
Then, one can compute explicitly the quasi-polynomial function with
the program \cite{iscc}. The output is too big (even using symmetries)
to be collected there. The interested reader can get details from
\cite[Supplementary material]{wp}.

\begin{prop}
\label{prop:calculSL4nr}
The cone $\Lambda_4^\nr\times\Lambda_4$ decomposes into 205 cones of non
empty interior. On 151 of them $\Nb_4(c_{\lambda\mu}^\bullet> 0)$ is
polynomial of degree 3, and on the other 54 it is quasi-polynomial. 
The only congruence occurring is the parity of $\lambda_1+|\mu|$.
  
Moreover, for any pair $(C,P)$ where $C$ is one of the 205 cones and $P$ the corresponding
function, one can see that in this list there is also a pair $(C',P')$ obtained by replacing $\lambda$
by $\lambda^*$ (in 57 cases, $(C',P')=(C,P)$). In particular,
Conjecture~\ref{conj:setLR} holds. 

Under the action of $\ZZ/2\ZZ\times \ZZ/2\ZZ$ there are $61$ orbits of
actual polynomials and $22$ orbits of quasi-polynomials. 
\end{prop}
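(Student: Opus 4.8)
The plan is to express $\Nb_4(c_{\lambda\mu}^\bullet>0)$ as the number of lattice points in a family of polytopes varying affinely with $(\lambda,\mu)$, to compute this multivariate Ehrhart-type function explicitly with Barvinok's algorithm, and then to extract both the periodicity statement and the symmetry by inspecting the resulting list of chambers. The starting point is \eqref{eq:NBaff}: $\Nb_4(c_{\lambda\mu}^\bullet>0)$ is the number of $\nu\in\Lambda_4$ in the affine slice of the Horn cone obtained by fixing $(\lambda,\mu)$, and since $\lambda$ is near-rectangular the relevant cone is $\Horn_4^\nr$, described minimally by the $32$ inequalities of Proposition~\ref{prop:horn4nr}, each depending affinely on $(\lambda,\mu)$. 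Using \eqref{eq:NbredSL} I would first pass to representatives with $\lambda_4=\mu_4=0$, so that the slice is $3$-dimensional, and invoke the theory of vector partition functions and multivariate Ehrhart quasi-polynomials (exactly as in the proof of Proposition~\ref{prop:nbisquasipol}; see \cite{Sturm:vectpart,BBKLV}) to conclude that $\Nb_4(c_{\lambda\mu}^\bullet>0)$ is piecewise quasi-polynomial of degree $3$ in $(\lambda,\mu)$.

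I would then carry out the computation itself, feeding the inequalities of Proposition~\ref{prop:horn4nr} to the ISCC implementation \cite{iscc} of Barvinok's algorithm. This returns a fan on $\Lambda_4^\nr\times\Lambda_4$ together with the quasi-polynomial carried by each maximal cone; from it one reads off the $205$ maximal cones, the $151$ on which the answer is an honest degree-$3$ polynomial, and the $54$ on which it is genuinely quasi-polynomial, with the only nontrivial period equal to $2$. That this period is governed by the parity of $\lambda_1+|\mu|$ I would verify in the output and cross-check against the necessary condition $|\nu|=|\lambda|+|\mu|$: for a reduced near-rectangular $\lambda$ one has $|\lambda|=\lambda_1+2\lambda_2\equiv\lambda_1\pmod 2$, so $|\nu|\equiv\lambda_1+|\mu|\pmod 2$, and it is precisely this parity that can flip the count by one when a vertex of the slice falls at a half-integer point. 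The full list of the $205$ pairs (cone, quasi-polynomial) would be recorded in \cite[Supplementary material]{wp}.

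To obtain the symmetry, the observation to make is that $\lambda\mapsto\lambda^*$ preserves $\Lambda_4^\nr$ — it sends $\lambda_1\lambda_2^{2}\lambda_4$ to $\det^{\lambda_4}\otimes V_4\big((\lambda_1-\lambda_4)(\lambda_1-\lambda_2)^{2}0\big)$, which is again near-rectangular — and that in the fundamental-weight coordinates $\lambda=k_1\varpi_1+k_2\varpi^*_1$ it is simply the transposition of $k_1$ and $k_2$. I would therefore apply this change of variables to each of the $205$ pairs and verify, one at a time, that the image pair again occurs in the list, with $57$ pairs left unchanged; this establishes $\Nb_4(c_{\lambda\mu}^\bullet>0)=\Nb_4(c_{\lambda^*\mu}^\bullet>0)$ for $\lambda$ near-rectangular, hence the corresponding instance of Conjecture~\ref{conj:nbLR}. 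Composing this involution with $(\lambda,\mu)\mapsto(\lambda^*,\mu^*)$ coming from \eqref{eq:symNb} then yields a $\ZZ/2\ZZ\times\ZZ/2\ZZ$-action on the $205$ pairs that respects the partition into polynomial and quasi-polynomial pairs, and counting orbits separately in the two classes gives $61$ orbits of polynomials and $22$ of quasi-polynomials, so $83$ in all.

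The obstacle here is entirely computational rather than conceptual. The Barvinok run on the $32$-facet cone $\Horn_4^\nr$ is heavy, and its raw output — the $205$ chambers together with their quasi-polynomials — is large and awkward to display; more fundamentally, the substitution $\lambda\mapsto\lambda^*$ is \emph{not} a manifest symmetry of the inequalities of Proposition~\ref{prop:horn4nr}, so the coincidence of the two piecewise quasi-polynomials cannot be deduced from any evident symmetry of the input data and genuinely has to be matched chamber by chamber in the output, as does the determination of which $57$ pairs are fixed and how the $83$ orbits are distributed.
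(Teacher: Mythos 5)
Your proposal matches the paper's argument: express $\Nb_4(c_{\lambda\mu}^\bullet>0)$ via \eqref{eq:NBaff} as a lattice-point count in an affine slice of the face of the Horn cone described by Proposition~\ref{prop:horn4nr}, run ISCC/Barvinok to get the piecewise quasi-polynomial, then verify the $\lambda\mapsto\lambda^*$ symmetry (i.e.\ $k_1\leftrightarrow k_2$ in fundamental-weight coordinates) and the $\ZZ/2\ZZ\times\ZZ/2\ZZ$-orbit structure by direct inspection of the 205 (cone, quasi-polynomial) pairs recorded in the supplementary material. The paper likewise treats the result as purely computational and does not attempt to explain the 205/151/54 split or the fixed-pair count conceptually; your added parity heuristic for the congruence $\lambda_1+|\mu|\pmod 2$ is a small sound supplement rather than a divergence.
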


Here we give three examples illustrating some of the variety of cases that one can observe. The
function $\Nb_4(c_{\lambda\mu}^\bullet> 0)$ for
$\lambda=k_1\varpi_1+k_2\varpi^*_1\in\Lambda_4^\nr\cap\Lambda_4^0$ and $\mu=\mu_1\mu_2\mu_3\in\Lambda_4^0$ is given:
\begin{itemize}
\item on the cone defined by $\mu_1\geq k_1 + \mu_3$, $\mu_1\geq k_2 + \mu_3$, $\mu_2\geq \mu_3$, $k_1 + k_2 + \mu_3 \geq \mu_1 + \mu_2$, $\mu_3\geq 0$, by the polynomial
$$
\begin{array}{l}
P=\displaystyle\frac{\mu_3}{2} \cdot \bigg( \mu_2(2\mu_1-\mu_2+1) + 2(\mu_1+1) - (\mu_3+1)(k_1+k_2+\mu_1-\mu_2+2) \bigg)\\
-\displaystyle\frac{\mu_2+1}{6}\cdot \bigg (3(k_1^{2} + k_2^{2}) - 3(k_1+k_2) (2 \mu_1+1)  + 3 \mu_1^{2} + 2 \mu_2^{2} - 3 \mu_1 + 4 \mu_2 - 6\bigg),
\end{array}
$$
symmetric in $k_1,k_2$.
\item on the cone defined by $\mu_1 + \mu_2 \geq k_1 + k_2 + \mu_3$, $k_2 + \mu_1 \geq k_1 + \mu_2 + \mu_3$, $k_2 + \mu_3 \geq \mu_2$, $k_1 + \mu_1 \geq k_2 + \mu_2 + \mu_3$, $k_1 + \mu_3 \geq \mu_2$, $k_1 + k_2 \geq \mu_1$, $\mu_3\geq 0$ (adjacent to the previous one), by the quasi-polynomial
\[ \left\lbrace\begin{array}{ll}
P+\displaystyle\frac{1}{24} \, {\left(k_{1} + k_{2} - \mu_{1} - \mu_{2} + \mu_{3} - 1\right)} & \\
\cdot{\left(k_{1} + k_{2} - \mu_{1} - \mu_{2} + \mu_{3} + 1\right)} & \text{if }k_1+k_2+\mu_1+\mu_2+\mu_3\text{ is odd}\\
\cdot{\left(-2 \, k_{1} - 2 \, k_{2} + 2 \, \mu_{1} + 2 \, \mu_{2} + 4 \, \mu_{3} + 3\right)} & \\
 & \\
P+\displaystyle\frac{1}{24} \, {\left(k_{1} + k_{2} - \mu_{1} - \mu_{2} + \mu_{3}\right)} & \\
\cdot\Big(2+{\left(k_{1} + k_{2} - \mu_{1} - \mu_{2} + \mu_{3}\right)} & \text{if }k_1+k_2+\mu_1+\mu_2+\mu_3\text{ is even},\\
\cdot{\left(-2 \, k_{1} - 2 \, k_{2} + 2 \, \mu_{1} + 2 \, \mu_{2} + 4 \, \mu_{3} + 3\right)}\Big) & 
\end{array}\right. \]
also symmetric in $k_1,k_2$.
\item on the cone defined by $\mu_1 \geq k_1$, $\mu_1 \geq k_2 + \mu_3$, $\mu_2 \geq \mu_3$, $k_2 \geq \mu_2$, $k_1 + \mu_3 \geq \mu_1$ (also adjacent to the first one), by the non-symmetric polynomial
\[ P+\begin{pmatrix}
k_1-\mu_1+\mu_3+1\\
3
\end{pmatrix}. \]
\end{itemize}

\section{Related questions}

\subsection{In type $D_n$}

Apart from type $A_n$, it is only in types $D_n$ and $E_6$ that the irreducible representations of simple Lie algebras are not all self-dual. Consider here type $D_5$.

\begin{center}
  \begin{tikzpicture}[scale=0.5]
    \draw (-1,0) node[anchor=east] {$D_{5}$}; \draw (0 cm,0) -- (4
    cm,0); \draw (4 cm,0) -- (6 cm,0.7 cm); \draw (4 cm,0) -- (6
    cm,-0.7 cm); \draw[fill=white] (0 cm, 0 cm) circle (.25cm)
    node[below=4pt]{$1$}; \draw[fill=white] (2 cm, 0 cm) circle
    (.25cm) node[below=4pt]{$2$}; \draw[fill=white] (4 cm, 0 cm)
    circle (.25cm) node[below=4pt]{$3$}; \draw[fill=white] (6 cm, 0.7
    cm) circle (.25cm) node[right=3pt]{$5$}; \draw[fill=white] (6 cm,
    -0.7 cm) circle (.25cm) node[right=3pt]{$4$};
  \end{tikzpicture}
\end{center}

Let $(\varpi_1,\dots,\varpi_5)$ be the list of fundamental
weights. Then $V({\varpi_4})^*\simeq V({\varpi_5})$ whereas
$V(\varpi_1)$, $V(\varpi_2)$ and $V(\varpi_3)$ are self-dual. 
The natural generalization of near-rectangular partitions is then to
consider the dominant
weights in $\NN \varpi_4\oplus\NN \varpi_5$.
A natural generalization of Conjecture~\ref{conj:nbLR} would be:  for $\lambda=a \varpi_4+b\varpi_5\in \NN \varpi_4\oplus\NN \varpi_5$ and $\mu$ a
dominant weight of $D_5$,  do the two tensor products 
$$V_{D_5}(a \varpi_4+b\varpi_5)\otimes V_{D_5}(\mu)\qquad{\rm and}
\qquad
V_{D_5}(b \varpi_4+a\varpi_5)\otimes V_{D_5}(\mu)$$
contain the same number of isotypic components?

The answer is NO, even assuming that $\mu\in \NN \varpi_4\oplus\NN
\varpi_5$ too.
An example is $\lambda=2\varpi_4+\varpi_5$ and
$\mu=\varpi_4+2\varpi_5$.
The two tensor products have respectively $31$ and $30$ isotypic
components as checked using SageMath \cite{sage}:\\

\noindent
{\tt
sage: D5=WeylCharacterRing("D5",style="coroots")\\
sage: len(D5(0,0,0,2,1)*D5(0,0,0,1,2))\\
31\\
sage: len(D5(0,0,0,1,2)*D5(0,0,0,1,2))\\
30\\
}

\subsection{In type $A_n$}
\label{sec:typeAn}

The representations of $\SL_n(\CC)$ corresponding to near-rectangular
partitions are of the form $V(a\varpi_1+b\varpi_{n-1})$. 
Observe that $(\varpi_1,\varpi_{n-1})$ is a pair of mutually  dual
fundamental weights. 
One could hope that Conjecture~\ref{conj:setLR}  or \ref{conj:nbLR}  hold for any linear
combination of a given pair of mutually dual fundamental weights. This
is not true even for $(\varpi_2,\varpi_3)$ and $n=5$. Indeed, for
$\lambda=\varpi_2+2\varpi_ 3$ and $\mu=3\varpi_2+\varpi_3$, the numbers
of isotypic components in $V(\lambda)\otimes V(\mu)$ and
$V(\lambda)^*\otimes V(\mu)$ differ:\\

\noindent
{\tt
sage: len(lrcalc.mult([3,3,2],[4,4,1],5))\\
34\\
sage: len(lrcalc.mult([3,3,1],[4,4,1],5))\\
33\\
}

Mention finally that we checked Conjecture~\ref{conj:setLR} on
examples, using  SageMath. See \cite[test\_Conj1.sage]{wp}:

\begin{itemize}
\item Conjecture~1 holds for $\GL_4$
 if $\max(\lambda_1-\lambda_2,\lambda_2)\leq 20$ and
  $|\mu|\leq 40$.
\item Conjecture~1 holds for $\GL_5$
if $\max(\lambda_1-\lambda_2,\lambda_2)\leq 20$ and
  $|\mu|\leq 30$.
\item Conjecture~1 holds for $\GL_6$
if $\max(\lambda_1-\lambda_2,\lambda_2)\leq 10$ and
  $|\mu|\leq 30$.
\item Conjecture~1 holds for $\GL_{10}$
if $\max(\lambda_1-\lambda_2,\lambda_2)\leq 10$ and
  $|\mu|\leq 15$.
\end{itemize}

\bibliographystyle{alpha}
\bibliography{lr_PR}


\end{document}